\def\pn{ p+1-\varepsilon}
\def\ppp{ \frac{p+3-\varepsilon}2}
\def\div{\, \mbox{div}\,  }
\def\d{\displaystyle}
\def\pp{ p-\varepsilon}
\def\w{\rho (y)}
\def\p{\phi }
\def\ps{\phi(s)}
\numberwithin{equation}{section}
\def\ibint {\int_{B}}
\def\iint {\int_B}
\def\grad{\partial_y}
\def \I {B(x_0,T(x_0)-t)}
\def\nn{\nu_0}
\def\v{{\mathrm{d}}v}
\def\Box{\hfill\rule{2.5mm}{2.5mm}}
\def\t{{\mathrm{d}}\tau}
\def\s{{\mathrm{d}}s}
\def\y{{\mathrm{d}}y}
\def\no{\nonumber}
\def\grad{\nabla}
\def \er {\mathbb R}
\def\R{{\mathbb {R}}}
\def\cprime{$'$}
\newcommand{\ds}{\displaystyle}
\def\t{{\mathrm{d}}\tau}
\def\s{{\mathrm{d}}s}
\def\y{{\mathrm{d}}y}
\def\y{{\mathrm{d}}y}
\def\N{\int_{s_1}^{s_2}\iint\! e^{-\frac{2(p+1)s}{p-1}}s^{\frac{2a}{p-1}} F(\p w)\w{\mathrm{d}}y{\mathrm{d}}s}
\theoremstyle{plain}
\newtheorem{thm}{Theorem}
\newtheorem*{thm*}{Theorem}
\newtheorem{prop}{Proposition}[section]
\newtheorem{cor}[prop]{Corollary}
\newtheorem{lem}[prop]{Lemma}
\theoremstyle{definition}
\theoremstyle{remark}
\newtheorem{nb}{Remark}[section]
\def\blfootnote{\xdef\@thefnmark{}\@footnotetext}
\title{\bf The blow-up rate for    a non-scaling invariant   semilinear wave equations  in higher dimensions}
\author{Mohamed  Ali  Hamza\\
{\it \small % Department of Basic Sciences Deanship of Preparatory and Supporting Studies},\\
%{\it \small 
Imam Abdulrahman Bin Faisal University,
 Dammam, 34212, Saudi Arabia}\\
Hatem Zaag\\
{\it \small Universit\'e Sorbonne Paris Nord},\\
{\it \small LAGA, CNRS (UMR 7539), F-93430, Villetaneuse, France}
}
\date{}
\begin{document} 

\newpage

\maketitle

\begin{abstract}
 We consider  the    semilinear wave
equation $$\partial_t^2 u -\Delta u =f(u), \quad  (x,t)\in \er^N\times [0,T),\qquad (1)$$ with
$f(u)=|u|^{p-1}u\log^a (2+u^2)$,  where $p>1$ and $a\in \er$,
with subconformal
power nonlinearity.
  We will show  that the blow-up rate of any singular solution  of (1)  is  given by the ODE solution  associated with $(1)$,
The result   in one space  dimension,   has been proved   in \cite{HZjmaa2020}.   Our goal here is to extend this result to higher dimensions.
\end{abstract}

\medskip

 {\bf MSC 2010 Classification}:  35L05, 35B44, 35L71, 35L67, 35B40

\noindent {\bf Keywords:} Semilinear wave equation, blow-up, non scaling invariance, higher dimensions.

%%%%%%%%%%%%%%%%%%%%%%%%%%%%%%%%%%%%%%%%%%%%%%%%%%%%%%%%%%%%%%%%%%%%%%%%%%%%%
%%%%%%%%%%%%%%%%%%%%%%%%%%%%%%%%%%%%%%%%%%%%%%%%%%%%%%%%%%%%%%%%%%%%%%%%%%%%%
\section{Introduction}
%%%%%%%%%%%%%%%%%%%%%%%%%%%%%%%%%%%%%%%%%%%%%%%%%%%%%%%%%%%%%%%%%%%%%%%%%%%%%
%%%%%%%%%%%%%%%%%%%%%%%%%%%%%%%%%%%%%%%%%%%%%%%%%%%%%%%%%%%%%%%%%%%%%%%%%%%%%

\subsection{ Motivation of the problem }
This paper is devoted to the study of blow-up solutions for the
following semilinear  wave equation:
\begin{equation}\label{gen}
\left\{
\begin{array}{l}
\partial_t^2 u =\Delta u +f(u),\qquad  (x,t)\in \er^N\times [0,T),\\
\\
u(x,0)=u_0(x)\in  H^{1}_{loc,u}(\er^N),\qquad 
\partial_tu(x,0)=u_1(x)\in  L^{2}_{loc,u}(\er^N),
\end{array}
\right.
\end{equation}
%in spatial dimensions $N$,
where $u(t):x\in{\er^N} \rightarrow u(x,t)\in{\er}$ 
with focusing nonlinearity $f$ defined by:
\begin{equation}\label{deff}
 f(u)=   |u|^{p-1}u\log^a (2+u^2), \quad  p>1,\quad  a\in \er.
 \end{equation}
The spaces  $L^{2}_{loc,u}(\er^N)$ and $H^{1}_{loc,u}(\er^N)$ are  defined by
\begin{equation*}
L^{2}_{loc,u}(\er^N)=\{u:\er^N\rightarrow \er/ \sup_{d\in \er^N}(\int_{|x-d|\le 1}|u(x)|^2dx)<+\infty \},
\end{equation*}
and
\begin{equation*}
H^{1}_{loc,u}(\er^N)=\{u\in L^{2}_{loc,u}(\er^N),|\grad u|\in L^{2}_{loc,u}(\er^N) \}.
\end{equation*}
We assume in addition  that $p>1$  and if $N\ge 2$,  we further  assume that
\begin{equation}\label{subc}
p<p_c\equiv 1+\frac{4}{N-1}.
\end{equation}

\medskip

When $a\neq 0$, the nonlinearity in \eqref{deff} is not homogeneous, which means that equation \eqref{gen} is not scale invariant.
This is precisely our challenge in this paper, particularly in higher dimensions, since we handled the one
dimensional case in \cite{HZjmaa2020}.

\medskip

Semilinear wave equations with a nonlinearity showing a  logarithmic
factor  have been introduced
in  various nonlinear physical models, for instance in the context of nuclear physics,  wave mechanics, optics,  geophysics  etc ... see e.g. \cite{Bia75, Bia76}.

\medskip

  The defocusing case   has been  studied in the mathematical  literature and the first results
are due to 
%\cite{Po1}, where
 %The first study of  the semilinear wave equation with   nonlinearity  
 %including a  logarithmic factor is the  work of Tao
%  in
Tao \cite{T1} where the author  proved a global well-posedness and scattering 
  result for   the three dimensional  nonlinear wave equation 
 $\partial_{t}^2 u =\Delta u-|u|^{4}u\log (2+u^2),$
in  the radial case. See also the 
work of Shih \cite{S1}, where the method is refined to treat   $\partial_{t}^2 u =\Delta u-|u|^{4}u\log^c (2+u^2),$ for any $c\in (0,\frac43)$. Later,  Roy   extends  in \cite{R1} the   results (global well-posedness and scattering)
to solutions of the log-log-supercritical equation
 $\partial_{t}^2 u =\Delta u-|u|^{4}u\log^{c} \big(\log (10+u^2)\big),$
for $c$ small,  without any radial assumption. 
%This series of works should be considered as  a starting point  for the   understanding of the global behavior of the solutions in  the Sobolev supercritical regime   $\partial_{t}^2 u =\Delta u-|u|^{p}u,$ where  $p>4$. In this direction, we aim  to give  a light in the understanding of 
%the superconformal range ( $p>p_c$) related to the blow-up rate of the  solution of  equation \eqref{NLW} below.
Hoping to extend the validity of our argument to the conformal case  ($p=p_c$)
 in some forthcoming work, we may see the case $a > 0$  of \eqref{deff} as a further
step in the understanding of blow-up dynamics in the superconformal case related to
equation \eqref{NLW} below.

\medskip

Let us mention that  the blow-up question for  the   semilinear heat equation 
$ \partial_{t} u=\Delta u+|u|^{p-1}u\log ^a(2+u^2) $ %is  asked by
%% Nguyen in [43] for the following semilinear heat equation
%%It should be noted  the  parabolic problem in  the focusing case   
 is  studied  by Duong-Nguyen-Zaag  in \cite{DVZ}. More precisely, they   construct for this equation a solution 
which blows up in finite time $T$,  only at one blow-up point $a$, according to the following asymptotic dynamics:
\begin{equation}\label{heatequiv}
u(x,t)\sim \phi(t)\Big(1+\frac{(p-1)|x-a|^2}{4p(T-t)|\log (T-t)|}\Big)^{-\frac{1}{p-1}},\qquad  as\  t\to T,
\end{equation}
  where $\phi(t)$ is is the unique positive solution of the ODE 
\begin{equation}\label{heat1}
\phi'=|\phi|^{p-1}\phi\log ^a(2+\phi^2),\ \qquad\qquad  \lim_{t
\to T}\phi(t)=+\infty.
\end{equation}
Given that we have the same expression in the pure power nonlinearity case ($g(u)=|u|^{p-1}u$)   with   $\phi(t)$ replaced by 
 $\kappa (T-t)^{-\frac1{p-1}}$  (see \cite{BKnonl94}), we see that the effect of the nonlinearity is all encapsulated in the ODE \eqref{heat1}.

\bigskip

Equation  $\eqref{gen}$ is well-posed in
$H^{1}_{loc,u}\times L^{2}_{loc,u}$. This follows from the finite
speed of propagation and the well-posedness in $H^{1}(\er^N)\times L^{2}(\er^N)$.
%valid whenever $1< p <p_{S}=1+\frac{4}{N-2}$. 
The existence of
blow-up solutions $u(t)$ of $\eqref{gen}$ follows from
 ODE techniques or the energy-based blow-up criterion by Levine \cite{Ltams74} (see
also   \cite{LT, STV,T2}).  More
blow-up results can be found in Caffarelli and Friedman
 \cite{CFarma85,  CFtams86}, Kichenassamy and Littman
 \cite{KL1cpde93, KL2cpde93}.
%Let us mention the rather surprising result
%of Killip and Vi\c san who proved in \cite{KV11} that the ``first''
%blow-up set $\{ x_0\;|\;T(x_0) = \min_{x\in \R}T(x)\}$ can be any
%Cantor set.
Numerical simulations of blow-up are given by Bizo\'n {\it et  al.}  (see \cite{Bjnmp01, BBMWnonl10, BCTnonl04, 
BZnonl09}). %In this paper, we address the blow-up rate derivation for equation \eqref{gen}.

\medskip

If $u$ is an arbitrary blow-up solution of   \eqref{gen}, we define (see for example Alinhac \cite{Apndeta95}) a 1-Lipschitz curve $\Gamma=\{(x,T(x))\}$
such that the maximal influence domain $D$ of $u$ (or the domain of definition of $u$) is written as
\begin{equation}\label{defdu}
D=\{(x,t)\;|\; t< T(x)\}.
\end{equation}
The time
$\bar T=\inf_{x\in {\R^N}}T(x)$ and $\Gamma$ are called the blow-up time and the blow-up graph of $u$.
A point $x_0$ is non characteristic
if  there are
\begin{equation}\label{nonchar}
\delta_0\in(0,1)\mbox{ and }t_0<T(x_0)\mbox{ such that }
u\;\;\mbox{is defined on }{\mathcal C}_{x_0, T(x_0), \delta_0}\cap \{t\ge t_0\}
\end{equation}
where ${\cal C}_{\bar x, \bar t, \bar \delta}=\{(x,t)\;|\; t< \bar
t-\bar \delta|x-\bar x|\}$. If not, $x_0$ is said to be characteristic.
% We denote by $\RR$ (resp. $\SS$) the set of non characteristic (resp. characteristic) points.

\bigskip

In this paper, we study the  blow-up rate of any singular solution of \eqref{gen}. Before going on, it is necessary
to mention  that
 the blow-up rate in the case
with pure  power nonlinearity 
\begin{equation}\label{NLW}
%\left\{
%\begin{array}{l}
\partial^2_{t} u =\Delta  u+|u|^{p-1}u,   \,\,\,(x,t)\in \R^N \times [0,T),
%\\
%u(0)=u_0\mbox{ and }\partial_t u(0)=u_1,
%\end{array}
%\right.
\end{equation}
was studied by Merle and Zaag  in \cite{MZajm03, MZimrn05, MZma05}. More precisely, 
they proved  that if $u$  is a solution of \eqref{NLW}
with blow-up graph $\Gamma : \{ x\mapsto T(x)\}$ and $x_{0}$ is a
non-characteristic point, then,  for all $t\in
[\frac{3T(x_{0})}{4},T(x_{0})]$,
 \begin{eqnarray}\label{mzmz}
  0 < \varepsilon_{0}(p)\leq (T(x_{0})-t)^{\frac{2}{
   p-1}}\frac{\|u(t)\|_{L^{2}(\I )}}
{(T(x_{0})-t)^{\frac{N}2}}\\
  +(T(x_{0})-t)^{\frac{2}{ p-1}+1}\Big(\frac{\|\partial_{t} u(t)\|_{L^{2}(\I)}}
  {(T(x_{0})-t)^{\frac{N}2}}
  + \frac{\|\partial_x u(t)\|_{L^{2}(\I )}}{{(T(x_{0})-t)^{\frac{N}2}}}\Big)\leq K_0,\nonumber
  \end{eqnarray}
% where $I(\bar x,\bar t)=[\bar x-\bar t,\bar x+\bar t]$ and
 where the constant $K_0$ depends only on  $p$ and on an upper bound on
 $T(x_{0})$, ${1}/{T(x_{0})}$, $\delta_{0}(x_{0})$, together with the
 norm of initial data
 in $H^{1}_{loc,u}(\R^N)\times L^{2}_{loc,u}(\R^N)$. Namely, the blow-up rate of any singular solution of \eqref{NLW} is given by the solution of the  associated ODE
$u'' =|u|^{p-1}u$.
 Note that this result about the blow-up rate  is valid   
 in the subconformal  and conformal case  ($1<p\le  p_c$).
% under the
%condition
%\begin{equation}\label{higher}
%N\ge 2\ \  \textrm{and} \ \ 1<p \le \frac{N+3}{N-1}.
%\end{equation}

\medskip

 In a series of papers,   Merle and Zaag \cite{MZjfa07, MZcmp08, MZajm11, MZisol10}
  (see also   C\^ote and Zaag \cite{CZcpam13}) give a full picture of  blow-up
  for solutions  of  equation \eqref{NLW} in one space dimension.
Among other results, Merle and Zaag proved that characteristic
points are isolated and that the blow-up set $\{(x,T(x))\}$ is
${\cal {C}}^1$ near non-characteristic points and corner-shaped near
characteristic points. 
In higher dimensions, the method used in  the one-dimensional case no
longer holds
%does not remain valid
 because there  is no   classification of selfsimilar solutions of equation \eqref{NLW} in the energy space. However, in the radial case outside the origin, Merle and Zaag  reduce to  the one-dimensional case with  perturbation and  obtain the same results as for $N=1$ (see \cite{MZbsm11}
and also
the extension by Hamza and Zaag in \cite{HZkg12}
to the Klein-Gordon equation and other
damped lower-order perturbations of equation
 \eqref{NLW}). Later, Merle and Zaag could address the higher dimensional case in the subconformal
case and prove the stability of the explicit selfsimilar solution  with respect to the blow-up point
and initial data (see 
\cite{MZods, MZods14}).
Considering the behavior of radial solutions at the origin, Donninger and Sch{\"o}rkhuber were able to prove the stability of the ODE solution $u(t) =\kappa_0 (p)(T - t)^{-\frac{2}{p-1}} $  in the lightcone
 with respect to small perturbations in initial data, in
 a stronger topology (see \cite{DSdpde12,DStams14, DScmp16, DSaihp17}). 
 Their approach is based in particular on a good understanding of the
 spectral properties of the linearized operator in self-similar
 variables, operator which is not self-adjoint. Later, thanks to
 %by  establishing a
 suitable Strichartz estimates for the critical wave equation    in similarity variables, 
 Donninger proved in \cite{DDuke}
 %prove
 the stability of the  solution of the ODE
with respect to small perturbations in initial data, in the energy space.
Let us also mention that 
Killip, Stoval and Vi\c san proved in \cite{KSVma14}
that in  superconformal and Sobolev subcritical range, an upper bound on the blow-up rate is available.
This was further refined by Hamza and Zaag in \cite{HZdcds13}.

\medskip

In \cite{HZjhde12, HZnonl12}, using a highly non-trivial perturbative method, we could obtain the blow-up rate for the
 Klein-Gordon equation  and more generally, for equation 
\begin{equation}\label{NLWP}
\partial_t^2 u =\Delta u+|u|^{p-1}u+f(u)+g(\partial_t u ),\,\,\,(x,t)\in \R^N \times [0,T),
 \end{equation}
under the assumptions  $|f(u)|\leq M(1+|u|^q)$ and $|g(v)|\leq M(1+|v|)$,   for some $M > 0$ and $q<p\le  \frac{N+3}{N-1}$. 
In fact, we proved a similar result to $\eqref{mzmz}$,
valid in the subconformal  and conformal case.
%We also could derive in \cite{HZkg12}, the blow-up behavior in one space dimension, outside the origin.
  Let us also mention that in  \cite{H1, omar1, omar2}, the
results obtained in  \cite{HZjhde12,HZnonl12}   were extended by Hamza and Saidi to the  
strongly perturbed equation  (\ref{NLWP})  with  $|f(u)|\leq M(1+|u|^p\log^{-a}(2+u^2))$,  for some $a > 1$, though keeping the  same condition in $g$. %valid in the subconformal  and conformal case
 %($1<p\le  \frac{N+3}{N-1}$). 
Very recently, Azaiez and Zaag derived in \cite{AZ19} the blow-up rate for equations
of the type
\[
\partial_t^2 u = a(x) (\partial_x^2 u + \frac{N-1}x \partial_x u)
+b(x) |u|^{p-1}u +f(u)+ g(x,t,\partial_x u , \partial_t u)
\]
where
  $|f(u)|\le M(1+|u|^q)$ with $q<p$,
  $|g(x,t,v,z)|\le M(1+|v| \sqrt{a(x)}+|z|)$, for some $M>0$,
and $a(x)$ is typically $|x|^\alpha$ with $\alpha$ enjoying a infinite
number of values converging to $2$.

\medskip

%log-type nonlinearity
%
%Unlike the unperturbed case \eqref{NLW}, 
% the equation is not  invariant under the  scaling transformation
%$u \to u_{\lambda}(x,t)=\lambda^{\frac2{p-1}}u(\lambda x,\lambda t).$ This  fact is the main important point which implies 
%that the existence of
%a Lyapunov functional in similarity variables is far from being trivial. 
%
%

In  the previous works   \cite{H1, omar1, omar2,HZjhde12,HZnonl12}, we   consider a  class of
 perturbed equations   where the nonlinear term is  equivalent to the pure power $|u|^{p-1}u$   and we obtain  the  estimate 
 \eqref{mzmz}. This is due to the fact that the dynamics are governed
 by the ODE equation: $u'' =|u|^{p-1}u$
 and not influenced by perturbative terms.
% However, in the present article the situation is different.
Furthermore, our proof remains (non trivially) perturbative  with respect to the homogeneous PDE \eqref{NLW}, which is scale invariant.

\bigskip
This leaves unanswered an interesting question: Is the scale
invariance  property
% (for the PDE with only the main nonlinear term)
crucial  in deriving the blow-up rate?

\bigskip
 
In fact we {\textit{had the impression}} that the answer was ``yes'', since the scaling invariance induces in  similarity  variables a PDE which is  autonomous in the unperturbed case \eqref{NLW}, and asymptotically autonomous in the perturbed case \eqref{NLWP}.

\bigskip

In this paper we {\textit{prove}} that the answer is ``no'' from the
example of the PDE \eqref{gen} with the non homogeneous nonlinearity \eqref{deff}. In fact, 
our situation is different from  \eqref{NLW} and  \eqref{NLWP},
in the sense that 
%Indeed,
the  term
%like
$|u|^{p-1}u\log^a (2+u^2)$  
 is playing a fundamental role in the dynamics of  the blow-up  solution of \eqref{gen}. More  precisely, we obtain an analogous  result to \eqref{mzmz} but with a logarithmic correction  as shown in \eqref{main1} below.  In fact, the blow-up rate  is
 given by the solution of the  following ordinary differential
 equation:
 %$u'' =|u|^{p-1}u\log^a (2+u^2)$.
%
%
%\medskip
%
%In addition, there is a  
%
%\medskip
%
%More generally, studing equation   to some 
%  PDE with  nonlinearity  
% including a  logarithmic factor  have attracted much attention in recent years.  
%%In addition, the logarithmic Schrodinger equation was widely investigated in [ 
%There is a long list in 
% the  semilinear heat equation in \cite{},  
%  in the semilinear  Schr\"odinger equation \cite{} and 
% in the
%Navier-Stokes  equation \cite{}.
%
%\medskip
%
%As in \cite{MZajm03, MZimrn05,MZma05,HZnonl12,HZjhde12, omar1, omar2, H1}, here we want to write the solution  of the associate ordinary differential equation
%of (1.1).
%
 %
%Before handling the PDE, we first study the associated ODE  to \eqref{gen}
\begin{equation}\label{v}
v_T'' (t)=  |v_T (t)|^{p-1}v_T(t) \log^{a}\big(v_T^2(t)  +2\big), \quad v(T)=\infty,
\end{equation}
which satisfies
% and show  that  the nonlinear term including  the  logarithmic factor   gives raise to a different
% dynamic. In fact, thanks to Lemma A.2  in \cite{HZjmaa2020},
% % this paper, we show the blow-up rate is given by \eqref{v}.
%  we can see that  the solution $v_T$   satisfies
 \begin{equation}\label{equivv}
 v_T(t) \sim \kappa_{a} \psi_T(t), \text{ as } t \to\ T,\quad \textrm{ where}\quad  \kappa_{a} =  \left(\frac{2^{1-2a}(p+1)}{(p-1)^{2-a}} \right)^{\frac1{p-1}},
 \end{equation}
and
 \begin{equation}\label{psi}
 \psi_T(t)=(T - t)^{-\frac{2}{p-1}} (-\log (T - t))^{-\frac{a}{p-1}}
 \end{equation}
(see Lemma A.2  in \cite{HZjmaa2020}).

\subsection{Strategy of the proof}
Going back to the equation under study in this paper (see \eqref{gen} and \eqref{deff}), we
 introduce the following similarity variables, 
   defined for all
  $x_0\in \er^N$, $T_0$ such that $0< T_0\le T(x_0)$ by:
\begin{equation}\label{scaling}
y=\frac{x-x_0}{{T_0-t}},\quad s=-\log (T_0-t),\quad u(x,t)=\psi_{T_0}(t)w_{x_0,T_0}(y,s).
\end{equation}
On may think that it would be more natural to replace $\psi_{T_0}
(t)$ by $v_{T_0}(t)$ in this definition, since the latter is an exact solution of 
the ODE \eqref{v},
unlike the former.
 That might be a good idea, however, as $v_{T_0}(t)$ has no explicit expression, the calculations will immediately become too complicated.
For that reason, we preferred to replace the non-explicit $v_{T_0}(t)$ by
its explicit equivalent $\psi_{T_0}(t)$ in \eqref{psi}. The fact that the
latter is only an approximate solution and not an exact solution of \eqref{v} will have
%no incidence in
no effect on 
our analysis.

%
% Consider $u $   a solution of ({\ref{gen}}) with
%blow-up graph $\Gamma:\{x\mapsto T(x)\}$.
%Assuming $T(x_0)\le T_0(x_0,p,a,N)$ small enough (without loss of generality) 
%Let us introduce the following similarity variables,   defined for all
%  $x_0\in \er^N$, $T_0$ such that $0< T_0\le T(x_0)$ by:
%\begin{equation}\label{scaling}
%y=\frac{x-x_0}{T_0-t},\quad s=-\log (T_0-t),\quad u(x,t)=\psi_{T_0}(t)w_{x_0,T_0}(y,s).
%\end{equation}
From (\ref{gen}), the  function $w_{x_0,T_0}$  (we write $w$ for
simplicity) satisfies the following equation for all $y\in B$ and
$s\ge  \max ( -\log T_0, 1)$,
where $B\equiv B(0,1)$ stands for
the unit ball of $\er^N$ and throughout the paper:
\begin{align}\label{A}
\partial_{s}^2w&=\frac{1}{\rho}\div(\rho \grad w-\rho(y.\grad w)y)+\frac{2a}{(p-1)s}y.\grad w-\frac{2p+2}{(p-1)^2}w+\gamma(s)w\nonumber\\
&-\Big(\frac{p+3}{p-1}-\frac{2a}{(p-1)s}\Big)\partial_s w
-2y.\grad \partial_{s}w+
e^{-\frac{2ps}{p-1}}s^{\frac{a}{p-1}} f(\ps w),
\end{align}
with 
$\rho (y)=(1-|y|^2)^{\alpha}$,
\begin{equation}\label{alpha}
\alpha=\frac{2}{p-1}-\frac{N-1}{2}>0,
\end{equation}
\begin{equation}
   \gamma(s)=\frac{a(p+5)}{(p-1)^2s}-\frac{a(p+a-1)}{(p-1)^2s^2},\label{defgamma}
\end{equation}
and 
\begin{equation}
\ps =e^{\frac{2s}{p-1}}s^{-\frac{a}{p-1}}.\label{defphi}
\end{equation}
%
%\medskip
% 
%  This change of variables  is associated to  the nonlinear wave equation including a  logarithmic nonlinearity \eqref{gen}.
% In fact, we have the same transformation as in the 
%  pure power  case ($g(u)=|u|^{p-1}u$).
% % except of  $\phi(t)$ instead of  $(T_0-t)^{-\frac2{p-1}}$. Note that this 
% % change of variables
% %transforms the backward lightcone with vortex $(x_0,T_0)$ into the
% %infinite cylinder $(y,s)\in B\times [-\log T_0,+\infty)$. 
In the new
set of variables $(y,s),$ the behavior of $u$ as $t \rightarrow T_0$
is equivalent to the behavior of $w$ as $s \rightarrow +\infty$.  Moreover, if $T_0=T(x_0)$, then we simply write $w_{x_0}$
instead of $w_{x_0,T(x_0)}$.

\medskip

 The equation (\ref{A}) will be studied in the Hilbert  space $\cal H$
$${\cal H}=\Big \{(w_1,w_2), |
\displaystyle\ibint\Big(\big ( w_1^2 +|\grad w_1|^2-|y.\grad w_1|^2\big)+w_2^2\Big) \y<+\infty \Big \}.$$
%where $B=B(0,1)$ stands for the unit ball of $\er^N$
% and throughout the paper.

\medskip

% Throughout this paper,
% $C$  denotes a  generic positive constant
%  depending only on $p,N$  and $a,$  which may vary from line to line. Also,  we will use $K$ to denote a  generic positive constant
%  depending only on $p,N, a, \delta_0(x_0)$  and initial data   which may vary from line to line.  We write $f(s)\sim g(s)$ to indicate 
% $\displaystyle{\lim_{|s|\to \infty}\frac{f(s)}{g(s)}=1}$. Furthermore, we
% introduce
% \begin{equation}\label{defF}
%  F(u)=\int_{0}^{u}f(v){\mathrm{d}}v=\int_0^u|v|^{p-1}v \log^{{a}}(v^2  + 2)\v.
% \end{equation}
%
%\medskip
%
As in the pure power case \eqref{NLW} and the perturbed case
\eqref{NLWP},  the construction of a Lyapunov
functional in similarity variables was the starting point of our
strategy. In the present case \eqref{gen}, we adopt the same
strategy. We were successful in implementing that in
\cite{HZjmaa2020}, however, only in one space dimension.
 Indeed, 
 our method in  \cite{HZjmaa2020}  breaks down  in higher dimensions. 
Let us briefly
explain in the following the  method  used in  \cite{HZjmaa2020} and
how
%this strategy
it
breaks down  in higher dimensions,  giving sense to the present work. 

\medskip

%Before entering into the details of our 
%strategy in this paper, 
%it is convenient to explain the plane used 
%in our previous
%work in \cite{HZjmaa2020}.
The first step in \cite{HZjmaa2020} consists in the  introduction of a
functional associated to equation \eqref{A} 
 which satisfies the following  differential inequality:
 $$
\frac{d}{ds}h(s)\le -\alpha \ibint (\partial_{s}w)^2\frac{\w}{1-|y|^2}\y
+ \frac{C}{s}h(s),$$ where $\alpha$ is defined in \eqref{alpha} and
$w$  is the solution of   \eqref{A}.
Thanks to 
%Employing
the above-mentioned functional, we 
easily derived a polynomial (in $s$)
%\textbf{ space-time}
bound for
the $H^1\times L^2(B)$ norm 
%$H^{1}_{loc,u}(\er^N)-$norm
of   the solution of  \eqref{A} (in \textbf{space-time} averages). More precisely, we obtain the estimates
\eqref{feb19}, \eqref{feb191} and \eqref{feb192} below.
Let us recall that  the nonlinear term $f(u)$ given by \eqref{deff} is not a pure power. This is why   the strategy used    to remove the time averages  in the  case of pure power  in  Merle and Zaag  \cite{MZajm03, MZimrn05, MZma05},   and naturally
implemented  in our previous papers  \cite{H1, omar1, omar2,
  HZjhde12,HZnonl12}  in the perturbative cases,   
 breaks down in higher dimensions. Indeed,   this method is somehow based on 
some interpolation results in Sobolev spaces,
and
% certain
some
critical Gagliardo-Nirenberg estimates. However, in one dimension,  
 the strategy used    to remove the time  average works  since   it is based 
 on the embedding
 %$H^1(\R^2)\hookrightarrow L^{q}(\R^2)  $,
$H^1(\R\times[-\log T,+\infty))\hookrightarrow L^{q}(\R\times[-\log T,+\infty))  $,
for any $q>1$.
%By exploiting 
Using
the
% obtained
polynomial (in $s$)    bound for the $H^{1}_{loc,u}(\er)-$norm of  the solution of   \eqref{A} and the embedding  $H^1(\R)\hookrightarrow 
L^{\infty}(\R)$,  we derive  a  Lyapunov functional for
% the
equation \eqref{A} in
%space dimension one 
one space dimension,
which is  a crucial step to obtain the
optimal estimate.

\bigskip

Since the embedding of $H^1$ into $L^q$ for any $q>1$ is specific to
dimension $1+1$ and doesn't hold in dimension $N+1$,
%Obviously, as explained above, the  treatment of
the    higher dimensional case requires new ideas,
% in comparison with the  one  dimensional case. 
% Hence,
which we explain in the following.
% Let us then give some details on our strategy to overcome the difficulty in the present article in which the aim is  to extend the result in  \cite{HZjmaa2020} to higher dimensions     in the subconformal case ($p<p_c$).

\medskip

First, we  recall  in \eqref{feb19}, \eqref{feb191} and \eqref{feb192}
the rough polynomial (in $s$) bound (in space-time
averages) on the solution near any non characteristic point   in
similarity variables,  was proved and stated  in any %which holds by the same argument as in the one-dimensional case of \cite{HZjmaa2020}.
% and which are given in this paper below in \eqref{feb19}, \eqref{feb191} and \eqref{feb192}. 
%Indeed,  in higher 
dimensions   in the subconformal case ($p<p_c$).  Moreover, we
can somehow reduce to the pure power case, up to an $\varepsilon$ 
perturbation, as we write in the elementary estimates on the nonlinear
term given below in \eqref{equiv1},  \eqref{equiv44} and
\eqref{equiv44bis}. In fact, by exploting  these estimates, 
%and owing to a better comprehension of the effect of the  nonlinear term thanks to 
%\eqref{equiv1},  \eqref{equiv44} and \eqref{equiv44bis},
we prove an improved version of  the estimates
\eqref{feb19} and \eqref{feb191}, where we remove the time average.
%initially included in  \eqref{feb19} and \eqref{feb191}.
%our previous paper \cite{HZjmaa2020}. 
%by removing  the average in time that we included in
%\cite{HZjmaa2020}.
Then, 
 using    these new  estimates,   the embedding of  $H^1(\R^N)$ in 
$L^{2^*}(\R^N)  $ if $N\ge 3$ and in $L^q(\R^N)$ for any $q\ge 2$ if
$N=2$, together with the structure of the
 nonlinear term,  we end up with the   construction of a functional  $g(s)$
which  satisfies the following differential inequality:
 \begin{equation}\label{jardin1}
\frac{d}{ds}g(s)\le -\alpha \ibint (\partial_{s}w)^2\frac{\w}{1-|y|^2}\y
+ \frac{C}{s^{\frac32}}g(s)+ \frac{C}{s^{\frac74}}, 
\end{equation} 
where $\alpha$ is defined in \eqref{alpha} and $w$  is the solution of  \eqref{A}.
Naturally, by \eqref{jardin1}, we easily
derive 
%conclude the  construction of 
  a  Lyapunov functional for equation \eqref{A}, valid in any dimensions in the subconformal 
case,  and this is   our main contribution in this work. With this
Lyapunov functional at hand, the adaptation of the interpolation
strategy from our previous papers works straightforwardly.

\subsection{
Statement of the results}
To state our main result,  we start by introducing the following %natural 
functionals:
\begin{align}
  E(w(s),s)=&\iint
             \Big(\frac{1}{2}(\partial_{s}w)^2+\frac{1}{2}(|\grad
             w|^2-(y.\grad w)^2)+\frac{p+1}{(p-1)^2}w^2\nonumber\\
  &-e^{-\frac{2(p+1)s}{p-1}}s^{\frac{2a}{p-1}}   F(\p w)\Big)\w \y, \label{E}\\
L_0(w(s),s)=& \;E(w(s),s)-\frac1{s\sqrt{s}}
\iint \partial_{s}ww\w\y,\label{5jan1}
\end{align}
% \begin{eqnarray}
% E(w(s),s)\!\!\!&=&\!\!\!\!\iint \Big(\frac{1}{2}(\partial_{s}w)^2+\frac{1}{2}(|\grad w|^2-(y.\grad w)^2)+\frac{p+1}{(p-1)^2}w^2-e^{-\frac{2(p+1)s}{p-1}}s^{\frac{2a}{p-1}}   F(\p w)\Big)\w \y, \label{E}\\
% &&\no\\
% L_0(w(s),s)&=&E(w(s),s)-\frac1{s\sqrt{s}}
% \iint \partial_{s}ww\w\y,\label{5jan1}
%  %K(w(s),s)&=&H(w(s),s)+\frac{b}{2s^{b+1}+2s}L(w(s),s),\nonumber\\
% %N(w(s),s)&=&  K(w(s),s)+\frac{\sigma}{s^{b-1}},\no
% \end{eqnarray}
where
%$F$ is defined by  \eqref{defF}.
\begin{equation}\label{defF}
 F(u)=\int_{0}^{u}f(v){\mathrm{d}}v=\int_0^u|v|^{p-1}v \log^{{a}}(v^2  + 2)\v.
\end{equation}
%
%\begin{equation}\label{defF}
% F(u)=\int_{0}^{u}f(v){\mathrm{d}}v=\int_0^u|v|^{p-1}v \log^{{a}}(v^2  + 2)\v.
%\end{equation}
%where $F$  is defined by %\eqref{defF}.
%As we see above
Moreover,  for all
 $s\ge  \max (1, -\log T_0)$,  we define the functional
\begin{equation}\label{10dec2}
L(w(s),s)=\exp\Big(\frac{p+3}{\sqrt{s}}\Big) L_0(w(s),s)+\theta s^{-\frac34},
\end{equation}
where $\theta$ is a sufficiently large constant that will be determined later.
We will show that the functional  $L(w(s),s)$ is a decreasing 
  functional  of time  for equation (\ref{A}),  provided that $s$ is  large enough.
Clearly, by  \eqref{5jan1} and \eqref{10dec2},  the  functional $L(w(s),s)$  is a small perturbation of the   ``natural'' energy $E(w(s),s)$.

\medskip

Here is  the statement of  our main theorem in this paper.
 \begin{thm}\label{t1}
Consider   $u $    a solution of ({\ref{gen}}) with blow-up graph
$\Gamma:\{x\mapsto T(x)\}$, and  $x_0$  a non characteristic point.
Then there exists $t_1(x_0)\in [0,T(x_0)) $ such that, % the  solution of equation $\eqref{A}$ satisfying
 %$(w_{x_0},\partial_{s}w_{x_0})\in C([-\log(T(x_0)),\,\,+\infty),\H)$, it holds that $L(w_{x_0}(s),s)$ defined in \eqref{10dec2} satisfies the following inequality,
 for all $T_0\in  (t_1(x_0),T(x_0)]$,  for all  $s\ge  -\log(T_0-t_1(x_0))$, we have
\begin{equation*}%\label{t1lyap}
 L(w(s+1),s+1)-L(w(s),s) \leq -\alpha \int_{s}^{s+1}\iint (\partial_{s}w)^2\frac{\w}{1-|y|^2}\y \t,
\end{equation*}
where  $w=w_{x_0,T_0}$ is defined in \eqref{scaling}.
%Moreover,  there exists $S_2\ge S_1$ such that, 
%  \begin{equation}\label{t1positive}
%L(w_{x_0}(s),s)\ge 0, \ \ \forall s\ge \max(-\log(T(x_0)),S_2).
%\end{equation}
\end{thm}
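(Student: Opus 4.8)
The plan is to prove the pointwise-in-time differential inequality $\frac{d}{ds}L(w(s),s)\le -\alpha\iint(\partial_{s}w)^2\frac{\w}{1-|y|^2}\y$ for $s$ large enough, and then to integrate it over $[s,s+1]$; this immediately yields the stated estimate, once $t_1(x_0)$ is chosen so that the corresponding similarity time is large. Everything thus reduces to differentiating $L$ along the flow of \eqref{A}. I would start from $\frac{d}{ds}E(w(s),s)$, differentiating \eqref{E} under the integral sign, substituting $\partial_{s}^2 w$ from \eqref{A}, and integrating by parts against the weight $\w$ (the boundary terms on $\partial B$ vanish since $\alpha>0$).

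The computation is organized around a sequence of cancellations. The divergence term of \eqref{A}, tested against $\partial_{s}w\,\w$ and integrated by parts, cancels the gradient contribution $\iint\w\big(\grad w\cdot\grad\partial_{s}w-(y\cdot\grad w)(y\cdot\grad\partial_{s}w)\big)\y$ coming from \eqref{E}; the linear term $-\frac{2p+2}{(p-1)^2}w$ cancels the $\frac{p+1}{(p-1)^2}w^2$ term; and the nonlinearity $e^{-\frac{2ps}{p-1}}s^{\frac a{p-1}}f(\phi(s) w)$ tested against $\partial_{s}w$ cancels the $\phi(s)\,\partial_{s}w$ part of $\partial_{s}F(\phi(s) w)$, after using $e^{-\frac{2ps}{p-1}}s^{\frac a{p-1}}=e^{-\frac{2(p+1)s}{p-1}}s^{\frac{2a}{p-1}}\phi(s)$. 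The two genuinely dissipative terms, the damping $-\frac{p+3}{p-1}\partial_{s}w$ and the transport $-2y\cdot\grad\partial_{s}w$, combine through the identity $\mathrm{div}(\w y)=\frac{p+3}{p-1}\w-2\alpha\frac{\w}{1-|y|^2}$, which relies precisely on $N+2\alpha=\frac{p+3}{p-1}$ from \eqref{alpha}, to produce the leading dissipation $-2\alpha\iint(\partial_{s}w)^2\frac{\w}{1-|y|^2}\y$.

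After these cancellations, the surviving non-dissipative contributions all carry an explicit factor $1/s$: those coming from $\frac{2a}{(p-1)s}$, from $\gamma(s)$ in \eqref{defgamma}, from $\phi'/\phi$ and from the derivative of the prefactor $e^{-\frac{2(p+1)s}{p-1}}s^{\frac{2a}{p-1}}$, together with the Pohozaev-type defect $\frac{2}{p-1}e^{-\frac{2(p+1)s}{p-1}}s^{\frac{2a}{p-1}}\iint\big[(p+1)F(\phi(s) w)-\phi(s) w\,f(\phi(s) w)\big]\w\y$. I expect the decisive point to be that, using $(p+1)F(u)-uf(u)=-\int_0^u|v|^{p+1}\ell'(v)\,dv$ with $\ell(v)=\log^a(v^2+2)$, the leading order $\sim\frac1s\iint|w|^{p+1}\w\y$ of this defect is \emph{exactly} cancelled by the $1/s$ contributions of $\phi'$ and of the prefactor derivative; this algebraic cancellation is forced by the exponent $-\frac{a}{p-1}$ in $\psi_{T_0}$ from \eqref{psi}, and is precisely what allows the approximate solution $\psi_{T_0}$ to replace the exact $v_{T_0}$. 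The terms $\gamma(s)\iint w\,\partial_{s}w\,\w\y$ and $\frac{2a}{(p-1)s}\iint\partial_{s}w\,(y\cdot\grad w)\,\w\y$ are then absorbed by Cauchy--Schwarz against the dissipation (using $\iint(\partial_{s}w)^2\w\y\le\iint(\partial_{s}w)^2\frac{\w}{1-|y|^2}\y$ and a small fraction of the leading term, which is why only $-\alpha$, not $-2\alpha$, survives). The genuinely new, higher-dimensional obstacle — absent from \cite{HZjmaa2020} — is to control the remaining $O\!\big(s^{-2}\iint|w|^{p+1}\w\y\big)$ term: since $p<p_c$ gives $p+1<2^{*}$, I would estimate $\iint|w|^{p+1}\w\y$ through the weighted subconformal Sobolev embedding attached to $\H$ (built on $H^1(\er^N)\hookrightarrow L^{2^{*}}(\er^N)$ for $N\ge3$, resp.\ $H^1\hookrightarrow L^q$ for any $q\ge2$ when $N=2$), combined with the improved polynomial bounds \eqref{feb19}, \eqref{feb191} in which the time average has been removed, and with the elementary estimates \eqref{equiv1}, \eqref{equiv44}, \eqref{equiv44bis} comparing $f,F$ to the pure power up to an $\e$-loss; this replaces the one-dimensional embedding $H^1\hookrightarrow L^\infty$. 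Adding the correction term $-\frac1{s\sqrt s}\iint\partial_{s}w\,w\,\w\y$ of \eqref{5jan1} and substituting \eqref{A} once more (its divergence part contributing $+s^{-3/2}\iint\w(|\grad w|^2-(y\cdot\grad w)^2)\y$, its $(\partial_{s}w)^2$ part extra dissipation, the rest lower order), I would collect everything into the inequality \eqref{jardin1} for $g=L_0$.

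To conclude, I would differentiate $L=\exp\!\big(\frac{p+3}{\sqrt s}\big)L_0+\theta s^{-3/4}$ as in \eqref{10dec2}. Since $\frac{d}{ds}\exp\!\big(\frac{p+3}{\sqrt s}\big)=-\frac{p+3}{2}s^{-3/2}\exp\!\big(\frac{p+3}{\sqrt s}\big)$, the constant $p+3$ is chosen so that this negative term absorbs the $\frac{C}{s^{3/2}}g$ contribution of \eqref{jardin1}, while $\frac{d}{ds}(\theta s^{-3/4})=-\frac34\theta s^{-7/4}$ absorbs the $\frac{C}{s^{7/4}}$ contribution once $\theta$ is taken large enough (here one uses the boundedness of $L_0$ furnished by the polynomial bounds). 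This yields $\frac{d}{ds}L\le-\alpha\iint(\partial_{s}w)^2\frac{\w}{1-|y|^2}\y$ for $s$ large, and integrating over $[s,s+1]$, together with $\exp\!\big(\frac{p+3}{\sqrt s}\big)\ge1$, gives exactly the claimed inequality. The main difficulty throughout is the crucial $1/s$ cancellation in the nonlinear defect together with the higher-dimensional control of $\iint|w|^{p+1}\w\y$ in the subconformal range, which is the heart of the present work.
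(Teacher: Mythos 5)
Your overall architecture coincides with the paper's: differentiate $E$, extract the dissipation $-2\alpha\iint(\partial_s w)^2\frac{\w}{1-|y|^2}\y$ via the identity $\mathrm{div}(\w y)=\frac{p+3}{p-1}\w-2\alpha\frac{\w}{1-|y|^2}$, add the correction $-\frac{1}{s\sqrt s}\iint w\partial_s w\,\w\y$ of \eqref{5jan1} to reach \eqref{jardin1}, then cancel the $s^{-3/2}L_0$ term with the weight $\exp\big(\frac{p+3}{\sqrt s}\big)$ and absorb the $s^{-7/4}$ remainder by choosing $\theta$ large, exactly as in the paper's proof of Theorem \ref{t1}'. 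The gap lies in the one point that constitutes the paper's actual contribution: your control of the nonlinear defect term.

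After the cancellation you correctly identify (the paper's identity \eqref{16dec102}, $\log(2+\p^2w^2)-\frac{4s}{p-1}=\log(2\p^{-2}+w^2)-\frac{2a\log s}{p-1}$), the residual is \emph{not} $O\big(s^{-2}\iint|w|^{p+1}\w\y\big)$: it is of the form $\frac{C}{s^{a+2}}\iint|w|^{p+1}\log^{a}(2+\p^2w^2)\log(2+w^2)\,\w\y$ plus a term with $\log s$ in place of $\log(2+w^2)$, i.e.\ it carries an extra \emph{unbounded} factor $\log(2+w^2)$ on top of the weighted nonlinear integral. This factor is the entire higher-dimensional difficulty: were the residual really $s^{-2}$ times the nonlinear integral, the one-dimensional proof of \cite{HZjmaa2020} would carry over verbatim and Section 2 of the paper would be unnecessary. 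Moreover, your proposed remedy --- bounding $\iint|w|^{p+1}\w\y$ by Sobolev embedding together with the polynomial bounds --- cannot close the argument: the bound \eqref{16dec5bis} carries an exponent $q_1$ which is in no way small (in Proposition \ref{prop3.1} it is $q/(1-\beta)$ with $\beta$ possibly close to $1$), so a term like $s^{-2}\,s^{q_1(p+1)}$ need not decay at all, let alone be $O(s^{-7/4})$. Note also that the removal of the time averages from \eqref{feb19}--\eqref{feb191} is not available a priori; it is itself Proposition \ref{prop3.1}, proved in Section 2 by a covering argument and Gagliardo--Nirenberg in the subconformal range.

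What the paper does instead (Lemma \ref{lemmain}) is sharper in two ways, and both are missing from your sketch. First, one writes $\log(2+w^2)\le C(\e)+|w|^{\e^2}$ and interpolates $\int_B|w|^{p+1+\e^2}\log^a(2+\p^2w^2)\w\y$ between the main weighted integral (power $1-\e$) and $\int_B|w|^{p+1+\e}\log^a(2+\p^2w^2)\w\y$ (power $\e$), so that the crude polynomial bound $s^{q_1}$ enters only through the exponent $q_1(p+1+2\e)\e$, which is made $\le\frac14$ by choosing $\e$ small \emph{depending on} $q_1$; the uncontrolled size of $q_1$ is thus harmless. Second, the output of this lemma is not a constant but $s^{1/4}$ times the weighted nonlinear integral, producing in Lemma \ref{2018lem31} the term $\frac{K_{14}}{s^{a+7/4}}\iint|w|^{p+1}\log^a(2+\p^2w^2)\w\y$; this is absorbed neither by the dissipation nor by the choice of $\theta$, but by the \emph{negative} nonlinear term $-\frac{p-1}{2(p+1)s^{a+1}}\iint|w|^{p+1}\log^a(2+\p^2w^2)\w\y$ that the virial functional $J$ of Lemma \ref{LemJ} contributes to $\frac{d}{ds}L_0$ (scaled by $1/\sqrt s$, hence at order $s^{-a-3/2}$, which dominates $s^{-a-7/4}$ for $s$ large). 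In your proposal this contribution of the correction term is precisely what you dismissed as ``the rest lower order''; without it the differential inequality \eqref{jardin1} cannot be established, and with it the rest of your argument goes through as in the paper.
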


\medskip

\begin{nb}
  %We have chosen to present our main  result as Theorem \ref{t1}
  Since
  the existence of a Lyapunov functional in similarity variables is far from being trivial   and represents the crucial  step
  in this paper, we choose to state it first in our paper, and to give it
  the status of a ``first theorem''.
\end{nb}
%\begin{nb}
%Since we crucially need a covering technique in our argument, in fact, we need a uniform version for $x$ near $x_0$
% (see Theorem \ref{t1}'  below).
%\end{nb}
\begin{nb}
Let us note that 
our method breaks down in the  case of a   characteristic point, since in the construction of the 
Lyaponov functional 
in similarity variables,  we use  a covering technique in our argument which is not available  at a  characteristic point.
At this moment,  we do not know whether Theorem \ref{t1} continues to hold if  $x_0$ is a   characteristic point.
\end{nb}
%\begin{nb}
%Let us note that  
%the method works only in one dimensional case
%and breaks down  in higher dimensional case, that's why, our result in this paper is only in the   one dimensional case.
%%In addition, it's useful to note that this   Lypanov functional  is a small perturbation of the  natural energy.
%\end{nb}

As we said earlier, the existence of this Lyapunov functional  $ L(w(s),s)$ together with a blow-up criterion for
equation \eqref{A} make  a crucial step in the derivation of the blow-up
rate for equation \eqref{gen}. Indeed, with the functional $ L(w(s),s)$ and some more work, we are able to
adapt the analysis performed in  \cite{MZajm03, MZimrn05,MZma05} 
 for equation \eqref{NLW} and obtain the following result:

\begin{thm}\label{t2}
{\bf {(Blow-up rate for equation \eqref{gen})}}.\\
Consider   $u $    a solution of ({\ref{gen}}), with blow-up graph
$\Gamma:\{x\mapsto T(x)\}$ and  $x_0$  a non characteristic point.
Then, there exists  $\widehat{S}_2$  large enough  such that

i)
 For all
 $s\ge \widehat{s}_2(x_0)=\max(\widehat{S}_2,-\log \frac{T(x_0)}4)$,
\begin{equation*}
0<\varepsilon_0\le \|w_{x_0}(s)\|_{H^{1}(B)}+ \|\partial_s
w_{x_0}(s)\|_{L^{2}(B)} \le K,
\end{equation*}
where $w_{x_0}=w_{x_0,T(x_0)}$ is defined in (\ref{scaling}).\\
ii)  For all
  $t\in [t_2(x_0),T(x_0))$, where  $t_2(x_0)=T(x_0)-e^{-\widehat{s}_2(x_0)}$, we have
\begin{align}
&&0<\varepsilon_0\le \frac1{\psi_{T(x_0)}(t)}\frac{\|u(t)\|_{L^2(B(x_0,{T(x_0)-t}))}}{ {(T(x_0)-t)^{\frac{N}2}}}\label{main1}\\
&&+ \frac{T(x_0)-t}{\psi_{T(x_0)}(t)}\Big
(\frac{\|\partial_tu(t)\|_{L^2(B(x_0,{T(x_0)-t}))}}{
{(T(x_0)-t)^{\frac{N}2}}}+
 \frac{\|\partial_x u(t)\|_{L^2(B(x_0,{T(x_0)-t}))}}{ {(T(x_0)-t)^{\frac{N}2}}}\Big )\le K,\nonumber
\end{align}
where 
%$\psi_{T(x_0)}(t)=(T(x_0)-t)^{-\frac2{p-1}}\Big(-\log (T(x_0)-t)\Big)^{-\frac{a}{p-1}}$, \\
$K=K(p,  a, T(x_0), t_2(x_0),\|(u(t_2(x_0)),\partial_tu(t_2(x_0)))\|_{
H^{1}\times
L^{2}(B(x_0,\frac{T(x_0)-t_2(x_0)}{\delta_0(x_0)}) )})$, \\  $\psi_{T(x_0)}(t)$ is defined in \eqref{psi}  and $\delta_0(x_0)$ is defined in  \eqref{nonchar}.
\end{thm}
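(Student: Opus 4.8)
The plan is to prove everything in the self-similar variables \eqref{scaling} and then read off \eqref{main1} by a change of variables. Indeed, part (ii) is just part (i) rewritten in the original variables $(x,t)$: under \eqref{scaling} one has $\|u(t)\|_{L^2(B(x_0,T_0-t))}=\psi_{T_0}(t)(T_0-t)^{N/2}\|w(s)\|_{L^2(B)}$ and analogous identities, with the extra factor $(T_0-t)$ in front of the derivative terms exactly matching $\|\partial_s w(s)\|_{L^2(B)}$ and $\|\grad w(s)\|_{L^2(B)}$; thus \eqref{main1} is equivalent to the two-sided bound $0<\varepsilon_0\le\|w_{x_0}(s)\|_{H^1(B)}+\|\partial_s w_{x_0}(s)\|_{L^2(B)}\le K$ of part (i). The whole matter is therefore to establish this uniform bound for $s$ large, for which I would exploit the two consequences of Theorem \ref{t1}: with $T_0=T(x_0)$ and $s_0=-\log(T_0-t_1(x_0))$, the functional $L(w(s),s)$ is nonincreasing, hence bounded above by $L(w(s_0),s_0)$, and telescoping its decay gives $\int_{s_0}^{\infty}\iint(\partial_s w)^2\frac{\rho(y)}{1-|y|^2}\,\y\,\s<+\infty$.

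For the upper bound I would adapt the interpolation argument of Merle and Zaag. Boundedness of $L$ together with \eqref{10dec2} and \eqref{5jan1} yields an upper bound on the natural energy $E(w(s),s)$; since $E$ contains the negative nonlinear term $-e^{-2(p+1)s/(p-1)}s^{2a/(p-1)}\iint F(\phi w)\rho\,\y$, this does not directly bound the $H^1\times L^2$ norm, and one first obtains only a bound in space--time average. The finiteness of the dissipation then controls $\partial_s w$ in average, and a covering argument (available because $x_0$ is non characteristic, see \eqref{nonchar}) upgrades these averaged bounds to a genuine, $s$-uniform bound on $\|w(s)\|_{H^1(B)}+\|\partial_s w(s)\|_{L^2(B)}$. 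The only place where the non-homogeneity and the higher dimension intervene is in re-absorbing the nonlinear term during this bootstrap: there I would again reduce $F(\phi w)$ to the pure power $|\phi w|^{p+1}$ up to the logarithmic correction, and control the resulting $L^{p+1}$ integral by interpolating the $L^2$ bound from the energy against $H^1(\R^N)\hookrightarrow L^{2^*}$ (for $N\ge3$) or $H^1\hookrightarrow L^q$ (for $N=2$), which is licit precisely under the subconformal restriction \eqref{subc}. A blow-up criterion for \eqref{A} guarantees that $w$ exists for all $s\ge s_0$ and that the uniform bound is self-consistent.

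For the lower bound I would argue by contradiction. If $\|w_{x_0}(s_n)\|_{H^1(B)}+\|\partial_s w_{x_0}(s_n)\|_{L^2(B)}\to 0$ along some sequence $s_n\to\infty$, the rescaled solution becomes arbitrarily small on $B$; feeding this into the finite-speed-of-propagation and local well-posedness theory for \eqref{gen} one concludes that $u$ extends as a regular solution in a full backward neighborhood of $(x_0,T(x_0))$, contradicting the fact that $x_0$ lies on the blow-up graph $\Gamma$ and that the exact rate is $\psi_{T(x_0)}$. This produces a uniform $\varepsilon_0>0$ and finishes part (i).

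The main obstacle is the upper bound, and specifically the removal of the time average in higher dimensions, where the one-dimensional tool $H^1\hookrightarrow L^\infty$ is unavailable; this is exactly what the Lyapunov functional of Theorem \ref{t1}, combined with the critical Sobolev interpolation above, is designed to overcome. Once part (i) holds for all $s\ge\widehat{s}_2(x_0)$, setting $t_2(x_0)=T(x_0)-e^{-\widehat{s}_2(x_0)}$ and inverting \eqref{scaling} yields \eqref{main1} with the asserted dependence of the constant $K$, which completes the proof.
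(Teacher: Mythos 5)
Your overall architecture is the same as the paper's: part (ii) follows from part (i) by undoing the change of variables \eqref{scaling}, the lower bound comes from local well-posedness plus finite speed of propagation (the paper simply points to Lemma 3.1 of \cite{MZimrn05}), and the upper bound is driven by the Lyapunov functional of Theorem \ref{t1} followed by interpolation and a covering argument. However, two steps that carry the real weight of the paper's proof are missing or misattributed in your sketch. First, your telescoping claim $\int_{s_0}^{\infty}\iint(\partial_s w)^2\frac{\w}{1-|y|^2}\,{\mathrm{d}}y\,{\mathrm{d}}s<+\infty$ does not follow from monotonicity of $L$ alone: you also need $L$ (equivalently $L_0$) to be bounded \emph{from below}, uniformly in $s$. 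This is exactly what the paper extracts from the blow-up criterion (Lemma \ref{L19}): if $L(w(s_3),s_3)<0$ at some $s_3$, then $w$ blows up in finite time $s_4>s_3$, which is impossible because $w=w_{x,T^*(x)}$ is defined for all large $s$ ($T^*(x)$ lies under the blow-up graph, $x_0$ being non characteristic). This is what yields the two-sided bound \eqref{2018N00} of Corollary \ref{22jan1} and hence the dissipation bound \eqref{22janv2}. You do invoke ``a blow-up criterion'', but you assign it the role of guaranteeing that $w$ exists for all $s$ --- which is automatic from the definition of the similarity variables --- rather than its actual role of bounding $L$ below.

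Second, the space-time bound on the nonlinear term does not come from the upper bound on the energy together with Sobolev interpolation, as your second paragraph suggests: since the term $e^{-\frac{2(p+1)s}{p-1}}s^{\frac{2a}{p-1}}\iint F(\p w)\w\y$ enters $E(w(s),s)$ with a negative sign, the energy bound says nothing about it by itself, nor about the gradient and $\partial_s w$ terms it must dominate. The paper's Proposition \ref{projan22} obtains this bound through a separate multiplier identity: multiplying \eqref{A} by $w\w$ and integrating in space-time gives \eqref{et44}, in which the nonlinear quantity appears on the left with the positive coefficient $\frac{p-1}{2}$, and every term on the right-hand side is then absorbed via Lemma \ref{g} (Hardy's inequality, the Appendix estimates for $F_1$ and $F_2$, and the choice of $\nu_0$ small and $T(x_0)-t_3(x_0)$ small so that the coefficient $K_{19}\nu_0+\frac{C}{s_1}$ is below $\frac12$). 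This identity --- not the energy bound --- is what produces your ``bound in space--time average'', and it is precisely the step the paper writes out in full because it is where the non-homogeneous nonlinearity must be handled. A more minor point: for the covering argument you need the Lyapunov bounds uniformly for $x$ near $x_0$, with the slanted blow-up times $T^*(x)$ of \eqref{18dec1}; this is why the paper proves the uniform version, Theorem \ref{t1}', whereas you only invoke Theorem \ref{t1} at $x_0$ with $T_0=T(x_0)$.
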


\begin{nb}
  Both for the construction of the Lyapunov  functional and the
  derivation of the bounds in this theorem, 
%it should be noted that
% as we pointed   in  the construction of the Lyapunov  functional,
  our method breaks down in the conformal case,
  % ($p=p_c$ defined in
% \eqref{subc}), 
even when  $a<0$,  and we are not able to obtain the sharp estimate
  as in the case of a  pure power nonlinearity
  \eqref{NLW} treated in \cite{MZma05}. 
 % In fact, the method of perturbation of the Lyapunov functional does not work in the conformal
% case, even when  $a<0$.
\end{nb}

\begin{nb}
Since we crucially need a covering technique in the  argument of the construction of the Lyapunov functional,
our method breaks down too in the case  of a characteristic point  and we are not able to obtain the sharp estimate
as in the unperturbed case \eqref{NLW}. 
\end{nb}
\begin{nb}
As in  \cite{MZimrn05}  in the pure power nonlinearity  case   \eqref{NLW}, the proof of
Theorem \ref{t2} relies on four ideas: the existence of a Lyapunov functional,  interpolation in Sobolev spaces, some
critical Gagliardo-Nirenberg estimates and
a covering technique adapted to the geometric shape of the blow-up surface.
As we said before,  the first point where  we construt  a Lyapunov functional in similarity variables 
 is far from being trivial 
and represent  a  crucial  step. Consequently, 
we have chosen to present our main constribution as Theorem \eqref{t1}  and we 
 write a detailed  proof.  However,   for the other three points, the adaption of 
the proof of
\cite{MZimrn05} given in the  pure power nonlinearity  case \eqref{NLW} is straightforward   except for a key argument, where we
bound the nonlinear term  $e^{-\frac{2(p+1)s}{p-1}}s^{\frac{2a}{p-1}} F(\ps w)$.
%space-time norm of $w$.
Therefore, 
 in order to avoid unnecessary repetition, we prove this step and  kindly  refer to \cite{MZajm03, MZimrn05, MZma05, HZjhde12, HZnonl12,H1, omar1, omar2}  for the rest of the proof.
\end{nb}
\begin{nb}
Let us remark that we can  obtain the same  blow-up rate for  the  more general equation 
\begin{equation}\label{NLWlog}
\partial_t^2 u =\Delta u+|u|^{p-1}u\log^a(2+u^2)+k(u),\,\,\,(x,t)\in \R^N \times [0,T),
 \end{equation}
under the assumption  that $|k(u)|\leq M(1+|u|^{p}\log^b(2+u^2))$,   for some $M > 0$ and $b<a-1$. 
More precisely,   under this hypothesis,   we can construct  a suitable Lyapunov
functional for  this equation. Then, we  can prove a similar result to $\eqref{main1}$. However, the case where $a-1\le b<a$ seems 
to be out of reach with our techniques, though we think  we may obtain the same rate as in the unperturbed  case.
%In order to keep our analysis clear, we  restrict our analysis to  the  problem related to equation \eqref{gen}.
% The adaptation to the case
%is straightforward from the techniques of \cite{omar1}.
\end{nb}

%%%%%%%%%%%%%%%%%%%%%%%%%%%%%%
%%%%%%%%%%%%%%%%%%%%%%%%%%%%%%%%%%%%%%%%%%%%%%%%%%%%%%%%%%%%%%%%%%%%%%%%%%%%%%%%%%%%%
%%%%%%%%%%%%%%%%%%%%%%%%%%
%%%%%%%%%%%%%%%%%%%%%%%%%
%%%%%%%%%%%%%%%%%%%%%%%%%%%%%%%%%%%%%%%%%%%%%%%%%%%%%%%%%%%%%%%%%%%%%%%%%%
%{\bf Acknowledgment:} 
%The author would like  thank the
%reviewers for their  valuable comments which undoubtedly helped us to improve the
%presentation of our results.
%
%\subsection{Strategy of the proof:} 

%%%%%%%%%%%%%%%%%%%%%%%%%%%%%%%%%%%%%%%%%%%%%%%%%%%
%%%%%%%%%%%%%%%%%%%%%%%%%%%%%%%%%%%%%%%%%%%%%%%%%%%
%%%%%%%%%%%%%%%%%%%%%%%%%%%%%%%%%%%%%%%%%%%%%%%%%%%
%%%%%%%%%%%%%%%%%%%%%%%%%%%%%%%%%%%%%%%%%%%%%%%%%%%
This paper is organized as follows: In Section \ref{section2},  we
obtain a polynomial (in $s$) bound for the
$H^1\times L^2(B)$ norm
%$H^{1}_{loc,u}(\er^N)$ norm
of the solution $(w, \partial_s w)$.
% in the subconformal  case.
In Section \ref{section3}, thanks to this result, we prove that the 
functional $L(w(s),s)$ defined in \eqref{10dec2} is a Lyapunov functional for equation \eqref{A}. Thus, we get Theorem \ref{t1}. Finally, 
applying this last theorem,  we prove
Theorem \ref{t2}.

\medskip

Throughout this paper,
$C$  denotes a  generic positive constant
 depending only on $p,N$  and $a,$  which may vary from line to line.
 In addition, we  will use $K_1,K_2,K_3...$  as  a  positive constants
 depending only on $p,N, a, \delta_0(x_0)$  and initial data,   which
 may also vary from line to line.  We write $f(s)\sim g(s)$ to indicate 
$\displaystyle{\lim_{s\to \infty}\frac{f(s)}{g(s)}=1}.$

\section{A polynomial bound for the
  % $H^{1}_{loc,u}(\er^N)$
$H^1\times L^2(B)$
  norm of   solution of equation  \eqref{A}} \label{section2}
%Consider $u$ a solution of \eqref{gen}  with blow-up graph  $\Gamma:\{x\mapsto T(x)\}$ and  $x_0$  a 
%non characteristic point.
%

Let us first recall the rough polynomial  space-time estimate of the solution $u$   of \eqref{gen} near any
non characteristic point  obtained in \cite{HZjmaa2020}
(see Theorem 1).  
%  holds in higher dimensions, though that
%paper was written only in one space dimension.
 More precisely, we established the
following results:

\medskip

\noindent {\it{\bf { (Polynomial space-time estimate of solution of
\eqref{A}).}}}
{\it
\noindent  Consider $u $   a solution of \eqref{gen} with
blow-up graph $\Gamma:\{x\mapsto T(x)\}$ and  $x_0$  a non
characteristic point. Then,  there exists
$t_{0}(x_{0})\in [ 0,T(x_{0}))$ and  $q=q(a,p,N)>0$  such that,   for all $T_0 \in (t_{0}(x_0),T(x_{0})]$,  for all $s\geq -\log (T_0-t_{0}(x_{0}))$ and $x\in \er^N$ where $|x-x_0|\le \frac{T_0-t}{\delta_0(x_0)}$, 
we have
\begin{equation}\label{feb19}\int_s^{s+1}\ibint \big( |\grad w|^2 +( \partial_{s}w)^2\big)\y \t \leq K_1 s^q,
\end{equation}
\begin{equation}\label{feb191}\frac1{s^a}\int_s^{s+1}\ibint | w|^{p+1}\log^a(2+\phi^2  w^2)\y \t \leq K_1 s^q,
\end{equation}
\begin{equation}\label{feb192}
\ibint  w^2\y \leq K_1 s^q,
\end{equation}
where $w=w_{x,T^*(x)}$ is defined in \eqref{scaling}, with
 \begin{equation}\label{18dec1}
T^*(x)=T_0-\delta_0(x_0)(x-x_0)
\end{equation}
 and $\delta_{0}(x_{0})$ defined in \eqref{nonchar}.  Note that  $K_1$ depends on $ p, a, N,  \delta_{0}(x_{0})$, 
 $T(x_{0})$, $t_0(x_0)$ and
$\|(u(t_0(x_0)),\partial_tu(t_0(x_0)))\|_{H^{1}\times
L^{2}(B(x_0,\frac{T(x_0)-t_0(x_0)}{\delta_0(x_0)}) )}$.
Moreover, we have
\begin{equation}\label{23fev1}
-K_1s^{q}  \leq H_{m_0}(w(s),s) \leq K_1s^{q},
\end{equation}
where \begin{equation}\label{F022}
H_{m_0}(w(s),s)=E(w(s),s)-\frac{m_0}{s} \ibint   w\partial_{s}w\w \y,
\end{equation}
%and where
$E(w(s),s)$ is given by \eqref{E} and $m_0$   is a sufficiently large constant.
}

\bigskip

This section is devoted to deriving  
a polynomial bound for the
%$H^{1}_{loc,u}(\er^N)$
$H^1(B)$
norm.
% of rough estimate  for  solution of \eqref{A}.
  More precisely, this is the aim of this section.

\begin{prop}\label{prop3.1}
\noindent  Consider $u $   a solution of ({\ref{gen}}) with
blow-up graph $\Gamma:\{x\mapsto T(x)\}$ and  $x_0$  a non
characteristic point. Then,  there exists
$t_{0}(x_{0})\in [ 0,T(x_{0}))$ and  $q_1=q_1(a,p,N)>0$  such that  for all $T_0 \in (t_{0}(x_0),T(x_{0})]$, $s\geq -\log (T_0-t_{0}(x_{0}))$
and $x\in \er^N$ with $|x-x_0|\le \frac{T_0-t}{\delta_0(x_0)}$, % \color{re  $e^{-s}  $ or   $=T_0-t  $}
we have
\begin{equation}\label{co2bis}
\|w(y,s)\|_{H^1(B)}%\|\partial_sw(s)\|_{L^2(B)}
\leq K_2 s^{q_1},
\end{equation}
where $w=w_{x,T^*(x)}$ is defined in \eqref{scaling},  with $T^*(x)$ given in 
\eqref{18dec1}
 and $\delta_{0}(x_{0})$ defined in \eqref{nonchar}.  Note that  $K_2$ depends on $ p, a,  \delta_{0}(x_{0})$, 
 $T(x_{0})$, $t_0(x_0)$ and \\
$\|(u(t_0(x_0)),\partial_tu(t_0(x_0)))\|_{H^{1}\times
L^{2}(B(x_0,\frac{T(x_0)-t_0(x_0)}{\delta_0(x_0)}) )}$.
\end{prop}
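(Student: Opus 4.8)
Since $\|w(s)\|_{H^1(B)}^2=\iint(w^2+|\grad w|^2)\y$ and the $L^2$ part is already bounded pointwise in $s$ by \eqref{feb192}, the task reduces to a pointwise (not merely time-averaged) bound on $\iint|\grad w|^2\y$. The plan is to extract this from the pointwise control of the energy in \eqref{23fev1} rather than from \eqref{feb19} alone. First I would turn \eqref{23fev1} into a bound on $E(w(s),s)$ itself: by Cauchy--Schwarz, $\frac{m_0}{s}\big|\iint w\partial_s w\w\y\big|\le\frac{m_0}{2s}\|w\|_{L^2(B)}^2+\frac{m_0}{2s}\iint(\partial_s w)^2\w\y$, so for $s$ large the $\partial_s w$ contribution is absorbed by the kinetic part of $E$ and, using \eqref{feb192}, one gets $E(w(s),s)\le Cs^{q}$ pointwise. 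Since $|\grad w|^2-(y.\grad w)^2\ge0$ on $B$ and the other quadratic terms of $E$ are nonnegative, this gives
\begin{equation*}
\iint\big(|\grad w|^2-(y.\grad w)^2\big)\w\y\le Cs^{q}+2N(s),\qquad N(s)=\iint e^{-\frac{2(p+1)s}{p-1}}s^{\frac{2a}{p-1}}F(\ps w)\w\y ,
\end{equation*}
so everything reduces to a pointwise estimate of the weighted nonlinear term $N(s)$.

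Controlling $N(s)$ is the crux, and the only genuinely new step compared with the pure-power analysis of \cite{MZimrn05}. Here I would use the elementary estimates on the nonlinear term announced in the introduction, which compare $F(\ps w)$ to the pure power $|\ps w|^{p+1}$ up to an $\e$-perturbation and explicit logarithmic factors. A direct computation from \eqref{defphi} gives $e^{-\frac{2(p+1)s}{p-1}}s^{\frac{2a}{p-1}}\ps^{p+1}=s^{-a}$, while $\log^a(2+\ps^2w^2)$ behaves like $Cs^{a}$ on the bulk where $w$ is not exponentially small; hence $N(s)\le C\iint|w|^{p+1}\w\y$ up to logarithmic and lower-order corrections, in agreement with the structure of \eqref{feb191}. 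At this point the subconformal hypothesis \eqref{subc} enters decisively: it is equivalent to $\alpha>0$ in \eqref{alpha}, and it is exactly the condition under which the weighted Sobolev / Gagliardo--Nirenberg inequality holds and lets me bound $\iint|w|^{p+1}\w\y$ by an interpolation between the weighted gradient and $\|w\|_{L^2(B)}^2$. Together with \eqref{feb192} and Young's inequality, a small multiple of the weighted gradient is absorbed into the left-hand side, closing a pointwise weighted bound $\iint(|\grad w|^2-(y.\grad w)^2)\w\y\le Cs^{q'}$.

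It then remains to pass from this weighted bound, whose weight $\w=(1-|y|^2)^\alpha$ degenerates as $|y|\to1$, to the unweighted $\iint|\grad w|^2\y$ of \eqref{co2bis}. In the interior $|y|\le 1-\eta$ the weight is bounded below, so the weighted estimate already yields the unweighted gradient there, and the difficulty is confined to the boundary layer $|y|\to1$. To handle it I would re-center the self-similar variables \eqref{scaling} at nearby base points $x'$: thanks to the non-characteristic condition \eqref{nonchar} and the choice $T^*(x)$ in \eqref{18dec1}, the boundary layer of the ball at $x$ falls into the interior of the ball at $x'$, where the weight is harmless, and a finite covering transfers the interior bounds into a full-ball bound. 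The unweighted space-time estimate \eqref{feb19} supplies the uniform-in-$x'$ input for this step, the time average being removed along the way by a good-time selection on a unit $s$-interval together with a Gronwall argument for the energy driven by \eqref{A}.

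The main obstacle is exactly the pointwise control of $N(s)$. Because the nonlinearity \eqref{deff} is not homogeneous, the $s$-dependent factors $s^{-a}$ and $\log^a(2+\ps^2 w^2)$ must be tracked carefully through the weighted interpolation, and the borderline character of the embedding as $p\uparrow p_c$ is what forces the strict subconformal assumption together with the covering — consistently with the remarks that the method breaks down in the conformal case and at characteristic points.
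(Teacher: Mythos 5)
Your overall scaffolding (using \eqref{23fev1} to control the weighted gradient by the nonlinear term, invoking subconformality, and covering over nearby base points) matches the paper's, but the central analytic step of your second paragraph has a genuine gap. You propose to bound $\iint |w|^{p+1}\w\y$ by interpolating between the (weighted) gradient and $\|w\|_{L^2(B)}^2$, and then to absorb a small multiple of the gradient by Young's inequality. Count the exponents: Gagliardo--Nirenberg between $L^2$ and $H^1$ gives $\|w\|_{L^{p+1}(B)}^{p+1}\le C\,\|w\|_{H^1(B)}^{\frac{N(p-1)}{2}}\,\|w\|_{L^2(B)}^{(p+1)-\frac{N(p-1)}{2}}$, so the gradient enters with exponent $\frac{N(p-1)}{2}$, and absorption (or any self-bounding argument) requires $\frac{N(p-1)}{2}<2$, i.e.\ $p<1+\frac4N$. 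The subconformal range \eqref{subc} allows $p$ up to $p_c=1+\frac{4}{N-1}$, which is strictly larger, so for $1+\frac4N\le p<p_c$ your interpolation puts an exponent $\ge 2$ on the gradient and the argument cannot close. Equivalently: $L^2$ is too weak an interpolation endpoint, and the condition $\alpha>0$ of \eqref{alpha} is not the condition under which your absorption works. The covering in your third paragraph cannot rescue this, since it does not change the exponent arithmetic.

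This is exactly why the paper has a Step~1 (Lemma \ref{lem22}) which your proposal omits entirely: it first converts the space-time bound \eqref{feb191} into a space-time bound \eqref{13fev2} on $\iint|w|^{p+1-\e}\y$ (via \eqref{equiv1}, \eqref{equiv4bis}), and then removes the time average for the smaller power $\frac{p+3-\e}{2}$ by a mean value theorem plus fundamental theorem of calculus argument: pick $\sigma(s)\in[s,s+1]$ where the space integral equals its time average, write $\iint|w(y,s)|^{\frac{p+3-\e}{2}}\y$ as the value at $\sigma(s)$ plus $\int_{\sigma(s)}^{s}\frac{d}{d\tau}(\cdots)\,{\mathrm{d}}\tau$, and control the derivative term by $\int_s^{s+1}\iint|w|^{p+1-\e}\y\,{\mathrm{d}}\tau+\int_s^{s+1}\iint(\partial_s w)^2\y\,{\mathrm{d}}\tau$, both bounded thanks to \eqref{feb19} and \eqref{13fev2}. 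This yields the pointwise-in-$s$ bound \eqref{s1jan19} in $L^{\frac{p+3-\e}{2}}(B)$, a strictly better endpoint than $L^2$; interpolating between it and $L^{2^*}$ (Lemma \ref{lem24}) produces the gradient exponent $\beta(p,N,\e)$, which satisfies $\beta<1$ precisely up to $p<p_c$, since $\beta(p_c,N,0)=1$ and $\beta$ is increasing in $p$. Note also that the closing mechanism is not a Young absorption on a fixed ball: the energy \eqref{23fev1} controls only the weighted gradient, hence only $\int_{B(0,1/2)}|\grad w|^2\y$, while Gagliardo--Nirenberg needs the full-ball gradient; the paper therefore takes the supremum over base points $x$, applies the covering inequality of \cite{MZimrn05} to get \eqref{second22}, and solves the self-bounding inequality $X\le Ks^{q}X^{\beta}+Ks^{2q}$ with $\beta<1$ --- no Gronwall argument is involved. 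In short, the missing idea is the upgrade of the time-averaged bound \eqref{feb191} to the pointwise $L^{\frac{p+3-\e}{2}}$ bound; without it, the inequality you feed into the covering is false in part of the subconformal range.
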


\begin{nb}
  Let us insist on the fact that
the strategy of the proof works  only in   the subconformal case. 
Obviously, we can also prove that $\|\partial_sw(y,s)\|_{L^2(B)}\leq K_2 s^{q_1}.$
However, since this estimate is not useful in the proof, we have
chosen  not to  include it in the above proposition.
\end{nb}
\begin{nb}\label{polybound}
By using  the Sobolev's embedding and the above proposition, we can
deduce  that  for all $r\in [2,2^*]$ where $2^*=\frac{2N}{N-2}$ if
$N\ge 3$,
%and $2^*=\infty$,
and for all $r\in [2,\infty)$
if $N=2$:
\begin{equation}\label{16dec5bis}
\|w(s)\|_{L^{r}(B)} \le K_3s^{q_1},\quad  \textrm{for all }\ \   s\geq -\log (T^*(x)-t_{0}(x_{0})),
\end{equation}
where $K_3$ depends also on $r$.
%\textbf{Attention, tel que c'est \'ecrit, cela nous donne une
% estimation $L^\infty$ si $N=2$....}

%This estimate is  additional information wich is  crutial in the  construction is devoted to understanding the dynamics
%of equation (6) near the sum of k solitons. Most of the estimates are the same as
%in the unperturbed case (4) treated in [12] and [52] and some others are more delicate.
%For that reason, we leave the proof to Appendix A.
%Remark. Let us mention that, the estimates (31) (32) and (33) are similar to the ones
%obtained in the unperturbed case treated in [12] except for the following

\end{nb}
%\begin{nb}
%The strategy used  is similar to the proof in \cite{MZajm03,MZimrn05}
%except for the   . For that reason,
%we only give that step and refer to  \cite{MZajm03,MZimrn05} for the remaining steps in the proof of  Theorem \ref{t2}. This is the step we prove  here.
% \end{nb}

Let us prove Proposition \ref{prop3.1} in the following.

\medskip

  {\it{Proof of Proposition \ref{prop3.1}:}}
We proceed in 2 steps:\\
- In Step 1, we use   the covering technique  and  interpolation  to derive a polynomial estimate related
to the $L^{\frac{p+3-\varepsilon}{2}}(B)$ norm  of   $w(s)$, for any $\varepsilon\in (0,p-1).$\\
- In Step 2,
%by exploiting the result obtained in
using Step 1,
a Gagliardo-Nirenberg estimate and estimate \eqref{23fev1}  satisfied by $H_{m_0}(w(s),s)$ defined in \eqref{F022},
% the fact that $H_{m_0}(w(s),s)$  defined in 
% \eqref{F022}   satisfies  \eqref{23fev1} and 
% a Gagliardo-Nirenberg estimate,
we easily conclude   the proof of estimate \eqref{co2bis}.
% \textbf{Un peu
%   bancal... comment une fonctionelle satisfait-elle une estimation de
%   Gagliardo irenberg...}

\bigskip

{\bf Step 1:  Control of  $w$ in $L^{\frac{p+3-\varepsilon}{2}}(B)$}

We claim the following:
\begin{lem}\label{lem22}  For all $\varepsilon \in (0,p-1)$ and
  $s\ge  -\log (T^*(x)-t_0(x_0)),$  we have 
\begin{equation}\label{s1jan19}
\iint\!|w(y,s )|^{\ppp}{\mathrm{d}}y
\le K_4(\varepsilon) s^{q}.
\end{equation}
\end{lem}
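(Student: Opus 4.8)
The plan is to prove the \emph{fixed-time} bound \eqref{s1jan19} in two movements, since the inputs at our disposal are of mixed type: \eqref{feb19} and \eqref{feb191} are space-time averages over $[s,s+1]$, whereas only \eqref{feb192} is pointwise in $s$, and only at the $L^2$ level. First I would establish a \emph{space-time} average bound for $\int_B|w|^{(p+3-\varepsilon)/2}$, and then upgrade it to a bound at the single time $s$ by the covering technique. Throughout set $m=\frac{p+3-\varepsilon}2$ and note that $\varepsilon\in(0,p-1)$ forces $2<m<p+1$.

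For the first movement I would exploit that $m$ sits strictly below the power $p+1$ controlled, in log-weighted space-time form, by \eqref{feb191}. Equivalently, Hölder interpolates $L^m$ between $L^2$ and $L^{p+1}$, the former factor being handled by \eqref{feb192}; the real issue is the logarithm, and the surplus exponent $p+1-m=\frac{p-1+\varepsilon}2>0$ is exactly what is needed to dominate it uniformly in $a\in\er$. Indeed, on $\{|w|\le1\}$ one has $|w|^m\le1$, while on $\{|w|>1\}$, using $\phi(s)$ from \eqref{defphi} so that $\log(2+\phi^2w^2)\sim\frac{4s}{p-1}$ is polynomially large, the elementary fact that $|w|^{(p-1+\varepsilon)/2}$ beats $\big(\log(2+\phi^2w^2)\big)^{-a}$ yields the pointwise estimate $|w|^m\le C(\varepsilon)\,s^{|a|}\,|w|^{p+1}\log^a(2+\phi^2w^2)+C$ for $s$ large. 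Integrating over $[s,s+1]\times B$ and feeding in \eqref{feb191} then produces the space-time bound $\int_s^{s+1}\int_B|w|^m\,dy\,d\tau\le K_4(\varepsilon)s^{q}$, after enlarging $q$ if necessary.

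The second movement is the covering argument, and it is where the non-characteristic geometry is essential. The estimates \eqref{feb19}--\eqref{feb192} are available not merely for $w_{x_0,T_0}$ but for the whole family $w_{x,T^*(x)}$ with $x$ in the neighborhood $|x-x_0|\le\frac{T_0-t}{\delta_0(x_0)}$ and $T^*(x)$ given by \eqref{18dec1}. Fixing $s$, I would cover the fixed-time slice $B\times\{s\}$, read in the $w_{x_0,T_0}$ frame, by finitely many pieces coming from the cones based at nearby points $x$; finite speed of propagation together with the $1$-Lipschitz, $\delta_0$-separated shape of the blow-up graph at the non-characteristic point $x_0$ bounds both the number of pieces and the Jacobian distortion of the associated changes of variables. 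Each piece is then dominated by the space-time $L^m$ average of the corresponding $w_{x,T^*(x)}$ over a unit interval in its own similarity time, which the first movement controls by $K_4 s^q$; summing the finitely many contributions yields the desired $\int_B|w(y,s)|^m\,dy\le K_4(\varepsilon)s^{q}$. This is the Merle--Zaag covering scheme adapted to the present weighted nonlinearity.

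I expect the covering step to be the main obstacle: it is the only genuinely geometric ingredient, it is what forces the non-characteristic hypothesis, and it is precisely the mechanism that breaks down at a characteristic point, as the authors flag in the Remarks. By comparison, the first movement is routine once one observes that the strict inequality $m<p+1$ (that is, $\varepsilon>0$) supplies enough room to absorb $\log^a(2+\phi^2w^2)$ into a fixed power of $s$ for every real $a$, including the delicate case $a<0$ where the logarithm works against us.
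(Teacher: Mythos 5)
Your first movement is sound and matches the paper's opening step in spirit: the paper combines the identity \eqref{id1} with the elementary estimates \eqref{equiv1}, \eqref{equiv4bis} and the space-time bound \eqref{feb191} to get $\int_s^{s+1}\int_B |w|^{p+1-\varepsilon}\,{\mathrm{d}}y\,{\mathrm{d}}\tau \le C(\varepsilon)K_1 s^q$ (its \eqref{13fev2}); your log-absorption argument does the same job, for the even smaller power $\frac{p+3-\varepsilon}{2}$.

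The genuine gap is your second movement. The covering technique is a purely spatial device: it patches estimates on $B(0,\frac12)$ in the similarity frames of nearby points $x$ into an estimate on the full ball $B$ in the frame of $x_0$, but the quantity being patched keeps its temporal type --- fixed-time stays fixed-time, space-time stays space-time. It cannot upgrade an average over $[s,s+1]$ into a bound at the single time $s$: a fixed-time slice has measure zero in space-time, and your claim that ``each piece is dominated by the space-time $L^m$ average of the corresponding $w_{x,T^*(x)}$'' has no mechanism behind it (changing the base point $x$ turns the slice into another fixed-time slice in the new frame, not into a space-time region). What is needed, and what the paper actually does, is control of the \emph{time variation} of the quantity. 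The paper applies the mean value theorem to find $\sigma(s)\in[s,s+1]$ at which the fixed-time integral equals the space-time average (see \eqref{s1}), then writes the fundamental-theorem-of-calculus identity \eqref{2018d1} between $\sigma(s)$ and $s$, and controls the time derivative by Young's inequality, $|w|^{\frac{p+1-\varepsilon}{2}}|\partial_s w|\le |w|^{p+1-\varepsilon}+(\partial_s w)^2$, whose space-time integrals are bounded by \eqref{13fev2} and \eqref{feb19}. This step is precisely what dictates the exponent $\frac{p+3-\varepsilon}{2}$ in the statement: it is the largest power whose derivative term can be absorbed this way, since $2\big(\frac{p+3-\varepsilon}{2}-1\big)=p+1-\varepsilon$. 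Indeed, if your covering conversion were legitimate, it would deliver the fixed-time bound for the full power $p+1-\varepsilon$, strictly stronger than the lemma --- a warning sign that the step is too good to be true. Note finally that no covering at all is used in the paper's proof of this lemma (covering enters later, in Step 2 of Proposition \ref{prop3.1} and in the Lyapunov construction); here the non-characteristic hypothesis is only inherited through the inputs \eqref{feb19}--\eqref{feb192}.
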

\noindent{\it Proof}:
Let us recall from the expression of $\p= \ps$ defined in \eqref{defphi}  that  we have for all $s\ge \max  (-\log T^*(x),1)$, 
\begin{equation}\label{id1}
e^{-\frac{2(p+1)s}{p-1}}s^{\frac{2a}{p-1}}  \p wf(\p w)=\frac1{s^{a}}|w|^{p+1}\log^a(2+\p^2w^2).  
\end{equation}
Combining  \eqref{id1}, \eqref{equiv1}, \eqref{equiv4bis} and  \eqref{feb191}, we deduce that for all $s\ge  -\log (T^*(x)-t_0(x_0)),$ 
\begin{equation}\label{13fev2}\int_s^{s+1}\ibint | w|^{p+1-\varepsilon}\y \t \leq C (\varepsilon)
 K_1 s^q.
\end{equation}
 Now,  for all $s\ge  -\log (T^*(x)-t_0(x_0))$, 
  using the mean value theorem, we derive the existence of $\sigma(s)\in [s,s+1]$ such that
\begin{equation}\label{s1}
\iint\!|w(y,\sigma (s))|^{\pn}{\mathrm{d}}y
=\int_{s}^{s+1}\iint\!|w(y,\tau
)|^{\pn}{\mathrm{d}}y{\mathrm{d}}\tau.
\end{equation}
 Let us introduce the following identity for all $s\ge  -\log (T^*(x)-t_0(x_0)),$  
\begin{align}\label{2018d1}
\iint\!|w(y,s )|^{\ppp}{\mathrm{d}}y=&\iint\!|w(y,\sigma (s))|^{\ppp}{\mathrm{d}}y  +
\int_{\sigma (s)}^{s}\frac{d}{d \tau}\iint |w(y,\tau)|^{\ppp}{\mathrm{d}}y{\mathrm{d}}\tau.
\end{align}
Combining (\ref{s1}), \eqref{2018d1} and   the fact that  $xy\le x^2+y^{2},$ for all $x\ge 0,$  $y\ge 0$, we write  for all $s\ge  -\log (T^*(x)-t_0(x_0)),$ 
\begin{align}\label{janv121}
\iint\!|w(y,s )|^{\ppp}{\mathrm{d}}y
\le& \int_{s}^{s+1}\iint\!|w(y,\tau
)|^{\ppp}{\mathrm{d}}y{\mathrm{d}}\tau
 + C\int_{s}^{s+1}\iint\!|w(y,\tau)|^{\pn}{\mathrm{d}}y{\mathrm{d}}\tau\no\\
&+C\int_{s}^{s+1}\iint\!(\partial_s
w(y,\tau))^2{\mathrm{d}}y{\mathrm{d}}\tau.
\end{align}
Thanks to   (\ref{janv121}), the classical inequality $x^{\ppp}\le 1+x^{\pn},$ for all $x\ge 0$,  $\varepsilon \in (0,p-1)$,   (\ref{feb19}) and  (\ref{13fev2}),  we obtain
\eqref{s1jan19}.
This concludes  the Lemma \ref{lem22}. 
\Box

\bigskip

{\bf Step 2:   Control of  $\grad w$ in $L^{2}(B)$}.

As in the pure power case, we first use the  Gagliardo-Nirenberg inequality in order to obtain   the following:
\begin{lem}\label{lem24}  There exists $\varepsilon_0=\varepsilon_0(p,N)>0$ such that, for all $\varepsilon \in (0,\varepsilon_0]$, for all   
  $s\ge  -\log (T^*(x)-t_0(x_0)),$  we have 
\begin{equation}\label{s40}
\iint\!| w(y,s )|^{p+1+\varepsilon}{\mathrm{d}}y
\le  K_5 s^{q} \Big(\iint\!|\grad w(y,s )|^{2}{\mathrm{d}}y\Big)^{\beta(\varepsilon) }+K_5s^{2q},
\end{equation}
where $\beta=\beta(p,N,\varepsilon)
%=\frac{\frac{p-1+3\varepsilon}4}{1-\frac{p+3-\varepsilon}{2\cdot 2^*}}
 \in (0,1).$
\end{lem}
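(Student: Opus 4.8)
\proof
The plan is to deduce \eqref{s40} from a single Gagliardo--Nirenberg interpolation on the unit ball $B$, interpolating the target norm $L^{p+1+\varepsilon}(B)$ between the Dirichlet norm $\|\grad w\|_{L^2(B)}$ and the lower Lebesgue norm $L^{\ppp}(B)$, the latter being already controlled by Step~1 through \eqref{s1jan19}. Writing $r=p+1+\varepsilon$ and $m=\ppp$ (so that $m<r$ once $\varepsilon$ is small), the first point I would settle is that the subconformal assumption \eqref{subc} forces $r<2^*$, where $2^*=\frac{2N}{N-2}$ if $N\ge3$ and $2^*=+\infty$ if $N=2$; indeed $p+1<p_c+1<2^*$, so $r<2^*$ for all $\varepsilon\in(0,\varepsilon_0]$ after a first reduction of $\varepsilon_0$. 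This legitimises the interpolation exponent $\theta\in(0,1)$ defined by $\frac1r=\theta(\frac12-\frac1N)+(1-\theta)\frac1m$, and the Sobolev embedding $H^1(B)\hookrightarrow L^{2^*}(B)$ (valid into every finite $L^q(B)$ when $N=2$) yields the Gagliardo--Nirenberg inequality
\begin{equation*}
\|w(s)\|_{L^{r}(B)}\le C\,\|\grad w(s)\|_{L^2(B)}^{\theta}\,\|w(s)\|_{L^{m}(B)}^{1-\theta}+C\,\|w(s)\|_{L^{m}(B)} .
\end{equation*}

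Next I would raise this to the power $r$, use $(x+y)^{r}\le 2^{r-1}(x^{r}+y^{r})$, and insert the bound $\|w(s)\|_{L^{m}(B)}^{m}\le K_4(\varepsilon)\,s^{q}$ furnished by \eqref{s1jan19}. Setting $\beta=\beta(p,N,\varepsilon)=\frac{\theta r}{2}$, so that $\|\grad w(s)\|_{L^2(B)}^{\theta r}=\big(\iint|\grad w(s)|^{2}\,\y\big)^{\beta}$, this gives
\begin{equation*}
\iint|w(s)|^{r}\,\y\le C\,s^{(1-\theta)\frac{rq}{m}}\Big(\iint|\grad w(s)|^{2}\,\y\Big)^{\beta}+C\,s^{\frac{rq}{m}} .
\end{equation*}
Since $s\ge1$, it then suffices to check that the two exponents of $s$ are bounded by $q$ and $2q$ respectively. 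The second satisfies $\frac{rq}{m}\le 2q\iff r\le 2m\iff\varepsilon\le1$, which is automatic. The first satisfies $(1-\theta)\frac{rq}{m}\le q\iff(1-\theta)r\le m$; writing $1-\theta=\big(\frac1r-\frac1{2^*}\big)\big/\big(\frac1m-\frac1{2^*}\big)$ one sees that this is equivalent to $r\ge m$, hence true for $p>1$. This is exactly \eqref{s40}.

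The only genuinely delicate point --- and the sole place where the subconformal restriction is essential --- is to secure $\beta=\frac{\theta r}{2}<1$ (positivity being clear). Substituting the value of $\theta$, the inequality $\theta r<2$ is equivalent to $\frac{r-2}{m}<\frac2N$, i.e. $N(r-2)<2m$, i.e.
\begin{equation*}
N(p-1+\varepsilon)<p+3-\varepsilon .
\end{equation*}
At $\varepsilon=0$ this reads $p(N-1)<N+3$, that is $p<1+\frac{4}{N-1}=p_c$, which is precisely \eqref{subc}. The inequality being strict at $\varepsilon=0$, a final choice of $\varepsilon_0=\varepsilon_0(p,N)$ small enough keeps it valid --- together with $r<2^*$, $m\ge2$ and $\varepsilon\le1$ --- for every $\varepsilon\in(0,\varepsilon_0]$, so that $\beta\in(0,1)$, which completes the proof. \Box
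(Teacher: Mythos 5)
Your proof is correct and follows essentially the same route as the paper: a Gagliardo--Nirenberg/Sobolev interpolation of $L^{p+1+\varepsilon}(B)$ between $L^{\frac{p+3-\varepsilon}{2}}(B)$ (controlled by Step 1 via \eqref{s1jan19}) and the gradient norm, producing exactly the same exponent $\beta=\frac{(p-1+3\varepsilon)/4}{1-\frac{p+3-\varepsilon}{2\cdot 2^*}}$, with the subconformal condition \eqref{subc} being precisely what makes $\beta<1$ at $\varepsilon=0$. The only (harmless) differences are organizational: you treat $N=2$ and $N\ge 3$ uniformly through the GN exponent relation where the paper splits into two cases (using a large finite $r$ when $N=2$), you use the homogeneous-gradient form of GN with the lower-order $L^{m}$ term so that \eqref{feb192} is never needed, and you verify $\beta<1$ by direct algebra ($N(p-1)<p+3\iff p<p_c$) instead of the paper's monotonicity-in-$p$ argument.
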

\noindent{\it Proof}:
%Thanks to
We distinguish two cases:
\begin{itemize}
\item First case ($N=2$):\\
Let $\varepsilon>0$ and  $r=r(p,\varepsilon)>p+1+\varepsilon$.   By interpolation, we write 
\begin{equation}\label{N2v1}
\iint\!| w(y,s )|^{p+1+\varepsilon}{\mathrm{d}}y
\le  \Big(\iint\! |w(y,s)|^{\ppp}{\mathrm{d}}y\Big)^{\eta} \Big(\iint| w(y,s )|^{r}{\mathrm{d}}y\Big)^{1-\eta},
\end{equation}
where
$$\eta=\frac{r-{(p+1+\varepsilon)}}{r-\frac{p+3-\varepsilon}{2}}.$$
% \ \ \beta=\frac{{p-1+3\varepsilon}}{2q-(p+3-\varepsilon)}
By using   \eqref{N2v1} 
and  the  Sobolev 
embedding  
 $H^1(B)\hookrightarrow L^{q}(B)  $  we get
\begin{equation}\label{s39bis}
\iint\!| w(y,s )|^{p+1+\varepsilon}{\mathrm{d}}y
\le  \Big(\iint\! w^{\ppp}(y,s){\mathrm{d}}y\Big)^{\eta} \Big(\iint| w(y,s )|^{2}{\mathrm{d}}y+
\iint|\grad w(y,s )|^{2}{\mathrm{d}}y\Big)^{\beta }.
\end{equation}
where
$$%\eta=\frac{q-{(p+1+\varepsilon)}}{q-\frac{p+3-\varepsilon}{2}}, \ \ 
\beta(r,p,\varepsilon)=\frac{r(p-1+3\varepsilon)}{4r-2(p+3-\varepsilon)}.$$
The combination of   $\ds \lim_{r\to \infty}\beta(r,p,\varepsilon)=\frac{p-1+3\varepsilon}{4}$ and $\frac{p-1}4<1$ implies  that 
there  exists   $\varepsilon_0=\varepsilon_0(p)=\frac{5-p}{10}>0$ 
    such that for $\varepsilon \in (0,\varepsilon_0] $ we can choose $r$ large enough such that 
$\beta=\beta(r,p,\varepsilon) \in (0,1).$ 
The result \eqref{s40}  follows  immediately from  \eqref{s39bis},
\eqref{feb192}
and \eqref{s1jan19}.

\item Second case ($N\ge3$):\\
Let $\varepsilon \in (0,p-1)$.  Let us write
the following Gagliardo-Nirenberg inequality:
% \textbf{Il y a un souci ici : si $w$ est constante, l'in\'egalit\'e
%   n'est pas bonne. Il faut peut-\^etre mettre la norme $H^1$ de $w$ au
%   lieu de la norme $L^2$ de son gradient ? tout a fait daccord... misprint corige}
\begin{equation}\label{s39}
\iint\!| w(y,s )|^{p+1+\varepsilon}{\mathrm{d}}y
\le  \Big(\iint\! w^{\ppp}(y,s){\mathrm{d}}y\Big)^{\eta} \Big(\iint| w(y,s )|^{2}{\mathrm{d}}y+
\iint|\grad w(y,s )|^{2}{\mathrm{d}}y\Big)^{\beta},
\end{equation}
where %$\varepsilon$ is small enough,
%and where
$$\eta=\frac{1-\frac{p+1+\varepsilon}{2^*}}{1-\frac{p+3-\varepsilon}{2\cdot
    2^*}}, \ \ 
\beta=\beta(p,N,\varepsilon)=\frac{\frac{p-1+3\varepsilon}4}{1-\frac{p+3-\varepsilon}{2\cdot 2^*}} \quad {\textrm {and}} \quad  \frac{1}{2^*}=\frac{N-2}{2N}.
$$
%\textbf{On est d'accord, cette in\'egalit\'e de Gagliardo Nirenberg
  %marche aussi pour $N=2$ ?}
%\textbf{Answer: J\rq{}ai ecris le cas de la dimension  2 a part}\\
Observe that the function  $p \hookrightarrow  \beta (p,N,\varepsilon)$
is an increasing function on $(1,p_c)$, hence, 
\begin{equation}\label{ABC}
   \beta (p,N,\varepsilon)<\beta ( p_c,N,\varepsilon ),   \qquad  \forall  p\in (1,p_c).
\end{equation} 
%% we have
%Moreover, by using the fact
Thanks to  \eqref{ABC}, the fact $\beta ( p_c,N,0)=1$
%\frac{1+\frac{3\varepsilon (N-1)}4}{1+\frac{\varepsilon (N-1)}{22^*}}$
 %Therefore we deduce that
%Since $p<p_c=\frac{N+3}{N-1}$,
%  $\beta(p,N,0)\in (0,1)$}.
% Therefore, 
and by continuity, we infer that  there exists $\varepsilon_0=\varepsilon_0(p,N)>0$  small enough 
such that for $\varepsilon \in (0,\varepsilon_0] $
 we have  $\beta=\beta(p,N,\varepsilon) \in (0,1).$ 
The result \eqref{s40}  follows  immediately from  \eqref{s39},
\eqref{feb192}
and \eqref{s1jan19},
 %\eqref{E00},  \eqref{sigma111}, \eqref{sigma12} and  \eqref{sigma13},
which ends the proof of Lemma \ref{lem24}.
\Box
\end{itemize}
%
%\textbf{Answer:Pour le  dernier commentaire  si j\rq{}ai bien compris on a deux choix \\
%1) Ecrire ce que tu as ajoute \\
% 2) inclure la preuve dans le papier..\\
%Merci de me
%dire ce que tu  en penses}

%(y,s )

Now, we are ready to give the proof of Proposition \ref{prop3.1}.

\medskip

{{\it {Proof of Proposition \ref{prop3.1}}}: Let us first use the
  following covering lemma from \cite{MZimrn05}: using 
  Proposition 3.3 in
  % \cite{MZimrn05}
that paper
  with $\eta= \frac{2(p+1)}{p-1}-N$ (which is positive), $q=2$ and
  $f=\nabla u$, together with the self-similar change of variables \eqref{scaling}, we write% on the one hand
\begin{equation}
\d\sup_{\{x\;|\;|x-x_0|\le \frac{T_0-t}{\delta_0}\}}\int_B\left|\nabla w\right|^2 \y
\le C(\delta_0) \d\sup_{\{x\;|\;|x-x_0|\le \frac{T_0-t}{\delta_0}\}}\int_{B(0,{\frac12})}\left|\nabla w\right|^2  \y, \label{first}
\end{equation}
where $T^*(x)$ is given by \eqref{18dec1}, $s= -\log(T^*(x)-t)$  and
$w=w_{x,T^*(x)}\left(y,s \right).$
%\textbf{L'argument est bien $\left(y,-\log(T^*(x)-t)\right)$ ou bien $(y,s)$?}\\
%From  \eqref{23fev1}, the definition \eqref{F022} of  $H_{m_0}(w(s),s)$, we see that for all $s\ge  -\log (T^*(x)-t_0(x_0)),$
%Thanks to the  estimate \eqref{23fev1} related    to $H_{m_0}(w(s),s)$,  we deduce  for all $s\ge  -\log (T^*(x)-t_0(x_0)),$
From  \eqref{23fev1}, the definition \eqref{F022} of   $H_{m_0}(w(s),s)$, we see that for all  $s\ge  -\log (T^*(x)-t_0(x_0)),$
\begin{align}\label{19fev1bisb}
\iint |\nabla w ( y,s)|^2(1-|y|^2)\w \y +\iint (\partial_sw ( y,s))^2w \y 
-\frac{2m_0}{s} \ibint   w\partial_{s}w\w \y\nonumber\\
  \le 2\iint
e^{-\frac{2(p+1)s}{p-1}}s^{\frac{2a}{p-1}}   F(\p w)\w \y + 2K_1s^{q}.
\end{align}
By the use of the basic  inequality $2ab\le a^2+b^2$, we write
\begin{align}\label{19fev1bis12}
\frac{2m_0}{s} \ibint   w\partial_{s}w\w \y
  \le
\iint (\partial_sw ( y,s))^2w \y 
+\frac{(m_0)^2}{s^2} \ibint  w^2\w \y.
\end{align}
Plugging  \eqref{19fev1bis12} and  \eqref{feb192}
 into \eqref{19fev1bisb}, we obtain
\begin{align}\label{19fev1bis}
\iint |\nabla w ( y,s)|^2(1-|y|^2)\w \y  &\le 2\iint
e^{-\frac{2(p+1)s}{p-1}}s^{\frac{2a}{p-1}}   F(\p w)\w \y + K_6s^{q}.
\end{align}
%\textbf{Oui mais, que faire avec le terme $\frac 1s \int w \partial_s w \rho $ ?Answer: jai detaille ca .merci de me dire si cest ok comme ca}\\
Thanks  to  \eqref{equiv4} and  \eqref{19fev1bis}, we conclude for all  $s\ge  -\log (T^*(x)-t_0(x_0)),$
\begin{align}\label{19fev1}
\iint |\nabla w(y,s )|^2(1-|y|^2)\w \y  
 &\le K_7\iint 
|w(y,s )|^{p+ \varepsilon+1}\w \y + K_7s^{q}.
\end{align}
According to \eqref{19fev1} together with  Lemma \ref{lem24}, we have for all $s\ge  -\log (T^*(x)-t_0(x_0)),$
\begin{align}\label{19fev2}
\d\int_{B(0,\frac12)}|\nabla w(y,s )|^2\y  &\le C
\iint |\nabla w(y,s )|^2(1-|y|^2)\w \y  \nonumber\\
&\le  K_8s^{q} \Big(\int\!|\grad w(y,s )|^{2}{\mathrm{d}}y\Big)^{\beta}+K_8s^{2q}.
\end{align}
Therefore,
\begin{equation}
\d\sup_{\{x\;|\;|x-x_0|\le \frac{T_0-t}{\delta_0}\}}\int_{B(0,\frac12)}|\nabla w(y,s )|^2\y
\le K_8s^{q} \left(\d\sup_{|x-x_0|\le \frac{T_0-t}{\delta_0}}\int_B\left|\nabla w(y,s )\right|^2  \y\right)^\beta+K_8s^{2q}.\label{second}
\end{equation}
where $\beta\in (0, 1)$. From \eqref{first} and \eqref{second}, we see that
\begin{equation}
\d\sup_{\{x\;|\;|x-x_0|\le \frac{T_0-t}{\delta_0}\}}\iint|\nabla w(y,s )|^2\y
\le K_9s^{q} \left(\d\sup_{|x-x_0|\le \frac{T_0-t}{\delta_0}}\int_B\left|\nabla w(y,s )\right|^2  \y\right)^\beta+K_9s^{2q}.\label{second22}
\end{equation}
It suffices to combine \eqref{second22}  and the fact that $\beta<1$, to obtain that
\begin{equation}
\d\sup_{\{x\;|\;|x-x_0|\le \frac{T_0-t}{\delta_0}\}}\iint|\nabla w(y,s )|^2\y
\le K_{10}s^{\frac{2q}{1-\beta}}.\label{second222}
\end{equation}
Clearly, by using  \eqref{second222} and \eqref{feb192}, we  conclude \eqref{co2bis}, 
where $q_1=\frac{1}{1-\beta}q$, which yields the conclusion of Proposition
\ref{prop3.1}. \Box

%%%%%%%%%%%%%%%%%%%%%%%%%%%%%%%%%%%%%%%%%%%%%%%%%%%%%%%%%%%%%%%%%%%%%%%%
%%%%%%%%%%%%%%%%

%
%\begin{prop}
%\noindent  Consider $u $   a solution of ({\ref{gen}}) with
%blow-up graph $\Gamma:\{x\mapsto T(x)\}$ and  $x_0$  a non
%characteristic point. Then,  there exist
%$t_{2}(x_{0})\in [ 0,T(x_{0}))$ and  $q>0$,  such that,   for all $T_0 \in (t_{2}(x_0),T(x_{0})]$,  for all $s\geq -\log (T_0-t_{2}(x_{0}))$
%and $x\in \er^N$, where $|x-x_0|\le \frac{e^{-s}}{\delta_0(x_0)}$, then 
%we have
%\begin{equation}\label{co2bis}
%\|w(s)\|_{H^1((-1,1))}+\|\partial_sw(s)\|_{L^2((-1,1))}
%\leq K_1 s^q,
%\end{equation}
%where $w=w_{x,T^*(x)}$ is defined in \eqref{scaling} with 
% \begin{equation}\label{18dec1}
%T^*(x)=T_0-\delta_0(x_0)(x-x_0),
%\end{equation}
% $K_1$ depends on $ p, a,  \delta_{0}(x_{0})$, 
% $T(x_{0})$, $t_2(x_0)$ and 
%$\|(u(t_2(x_0)),\partial_tu(t_2(x_0)))\|_{H^{1}\times
%L^{2}(I(x_0,\frac{T(x_0)-t_2(x_0)}{\delta_0(x_0)}) )}$.
%\end{prop}
%\begin{nb}
%By using  the Sobolev's embedding in one dimension space and the above proposition, we can deduce 
%\begin{equation}\label{16dec5bis}
%\|w(s)\|_{L^{\infty}(-1,1)} \le Ks^q,\qquad  s\geq -\log (T^*(x)-t_{2}(x_{0})).
%\end{equation}
%\end{nb}

\bigskip

\section{Proof of Theorem \ref{t1} and Theorem \ref{t2}}\label{section3}

In this section, we % consider the one space dimensional case ($N=1$). We
 prove 
Theorem \ref{t1} and Theorem \ref{t2} here  thanks to   Proposition \ref{prop3.1}. 
This section is divided into two parts:
\begin{itemize}
\item  In subsection \ref{3.2},   we state a general version of Theorem \ref{t1}, uniform for $ x$ near $x_0$ and prove it.
%
%  Theorem \ref{t1}. More precisely,  based upon Proposition  \ref{prop21}, we construct  a Lyapunov functional 
%for equation (\ref{A}) and a blow-up criterion involving this functional.
\item In subsection \ref{3.3}, we prove   Theorem \ref{t2}.
\end{itemize}

%\end{document}
%%%%%%%%%%%%%%%%%%%%%%%%%%%%%%%%%%%%%%%%%%%%%%%%%%%%%%%%%%%%%%%%%%%%%%%%%%%%%%%%%%%%%%%%%%%%%%%%%%%%%%%%%%%%%%%%%%%%%
%%%%%%%%%%%%%%%%%%%%%%%%%%%%%%%%%%%%%%%%%%%%%%%%%%%%%%%%%%%%%%%%%%%%%%%%%%%%%%%%%%%%%%%%%%%%%%%%%%%%%%%%%%%%%%%%%%%%%
%%%%%%%%%%%%%%%%%%%%%%%%%%%%%%%%%%%%%%%%%%%%%%%%%%%%%%%%%%%%%%%%%%%%%%%%%%%%%%%%%%%%%%%%%%%%%%%%%%%%%%%%%%%%%%%%%%%%%
%%%%%%%%%%%%%%%%%%%%%%%%%%%%%%%%%%%%%%%%%%%%%%%%%%%%%%%%%%%%%%%%%%%%%%%%%%%%%%%%%%%%%%%%%%%%%%%%%%%%%%%%%%%%%%%%%%%%%
%%%%%%%%%%%%%%%%%%%%%%%%%%%%%%%%%%%%%%%%%%%%%%%%%%%%%%%%%%%%%%%%%%%%%%%%%%%%%%%%%%%%%%%%%%%%%%%%%%%%%%%%%%%%%%%%%%%%%
%%%%%%%%%%%%%%%%%%%%%%%%%%%%%%%%%%%%%%%%%%%%%%%%%%%%%%%%%%%%%%%%%%%%%%%%%%%%%%%%%%%%%%%%%%%%%%%%%%%%%%%%%%%%%%%%%%%%%
%%%%%%%%%%%%%%%%%%%%%%%%%%%%%%%%%%%%%%%%%%%%%%%%%%%%%%%%%%%%%%%%%%%%%%%%%%%%%%%%%%%%%%%%%%%%%%%%%%%%%%%%%%%%%%%%%%%%%
%%%%%%%%%%%%%%%%%%%%%%%%%%%%%%%%%%%%%%%%%%%%%%%%%%%%%%%%%%%%%%%%%%%%%%%%%%%%%%%%%%%%%%%%%%%%%%%%%%%%%%%%%%%%%%%%%%%%%
%%%%%%%%%%%%%%%%%%%%%%%%%%%%%%%%%%%%%%%%%%%%%%%%%%%%%%%%%%%%%%%%%%%%%%%%%%%%%%%%%%%%%%%%%%%%%%%%%%%%%%%%%%%%%%%%%%%%%
%%%%%%%%%%%%%%%%%%%%%%%%%%%%%%%%%%%%%%%%%%%%%%%%%%%%%%%%%%%%%%%%%%%%%%%%%%%%%%%%%%%%%%%%%%%%%%%%%%%%%%%%%%%%%%%%%%%%%
\subsection{A Lyapunov functional}\label{3.2}
In this subsection, our aim is to construct a Lyapunov functional for equation \eqref{A}.
 Note that this functional is far from being trivial and makes our main contribution.
More precisely, thanks to the  rough estimate obtained in  the Proposition \ref{prop3.1}, 
we derive here that the functional  $L(w(s),s)$ defined in \eqref{10dec2} is a decreasing 
  functional  of time  for equation (\ref{A}),  provided that $s$ is large enough. 
First,  thanks to 
the  additional information obtained in Section \ref{section2}, we can write this
 important lemma which plays a key role in our analysis. More precisely, we claim the following:
\begin{lem}  \label{lemmain}  For all   
  $s \geq -\log (T^*(x)-t_0(x_0))$, we have
\begin{align}\label{claim1}
%\frac1{s^a}
\int_{B}  {|  w|^{p+1}}\log^{{a}}(2+\p^2 w^2  )\log (2+w^2 ) \w \y\le&
 K_{11}{s^{\frac14}}\int_{B}  {|  w|^{p+1}}\log^{{a}}(2+\p^2 w^2  ) \w \y\nonumber\\
&+ K_{11} {s^{a+\frac14}}.
\end{align}
\end{lem}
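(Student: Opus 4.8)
The plan is to split the ball $B$ according to the size of $w$ and to treat the two resulting regions by entirely different means. Fixing $s$, I would set $\Omega_1=\{y\in B:\log(2+w^2(y,s))\le s^{1/4}\}$ and $\Omega_2=B\setminus\Omega_1$. The integrand $|w|^{p+1}\log^a(2+\phi^2w^2)\rho(y)$ is nonnegative, since $2+\phi^2w^2\ge2$ keeps $\log^a(2+\phi^2w^2)$ positive; hence on $\Omega_1$ one may bound the extra factor by $\log(2+w^2)\le s^{1/4}$ and then enlarge the domain of integration back to all of $B$. This produces exactly the first term $s^{1/4}\int_B|w|^{p+1}\log^a(2+\phi^2w^2)\rho\,dy$ on the right-hand side of \eqref{claim1}. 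All the remaining work is then concentrated on $\Omega_2$, where by construction $|w|>M(s):=(e^{s^{1/4}}-2)^{1/2}$, a threshold that is super-polynomially large in $s$.

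On $\Omega_2$ the guiding idea is that both logarithmic factors grow more slowly than any power of $|w|$, so that the whole integrand can be dominated by $|w|^{p+1+\eta}$ with $\eta>0$ as small as we please, while the region $\Omega_2$ carries super-polynomially small mass. I would use $\log(2+w^2)\le C_\eta|w|^{\eta}$, and, after the elementary bound $\log(2+\phi^2w^2)\le\log(2+\phi^2)+\log(2+w^2)$, control the $\phi$-part through $\log(2+\phi^2)\le Cs$, which follows from the definition \eqref{defphi} of $\phi$. The crucial structural input is the subconformal assumption \eqref{subc}, which gives $p+1<2^*$; combined with Proposition \ref{prop3.1} and the Sobolev embedding recorded in \eqref{16dec5bis}, it yields $\int_B|w|^{r}\rho\,dy\le\int_B|w|^{r}\,dy\le K\,s^{q_1r}$ for every $r$ slightly larger than $p+1$. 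Writing $|w|^{p+1+\eta}=|w|^{-\eta}\,|w|^{p+1+2\eta}$ and using $|w|^{-\eta}\le M(s)^{-\eta}=e^{-c\eta s^{1/4}}$ on $\Omega_2$ then converts this merely polynomial bound into a super-polynomially small one.

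The main obstacle is the $s$-dependence hidden in $\log^a(2+\phi^2w^2)$, since $\phi$ is exponentially large in $s$; this forces a case distinction on the sign of $a$. For $a\ge0$ I would use $\log^a(2+\phi^2w^2)\le C\big(s^a+\log^a(2+w^2)\big)\le C\big(s^a+|w|^{\eta}\big)$, so that the integrand on $\Omega_2$ is bounded by $C\,s^a|w|^{p+1+\eta}+C|w|^{p+1+2\eta}$; for $a<0$ the monotonicity of $x\mapsto x^a$ together with $\log(2+\phi^2w^2)\ge\log(\phi^2)\ge cs$ gives at once $\log^a(2+\phi^2w^2)\le C\,s^a$, hence an integrand bounded by $C\,s^a|w|^{p+1+\eta}$. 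In either case, integrating over $\Omega_2$ and extracting the factor $M(s)^{-\eta}$ as above leads to a bound of the form $C\,s^{\max(a,0)}e^{-c\eta s^{1/4}}s^{q_2}$, which tends to $0$ faster than any power of $s$; being continuous and negligible with respect to $s^{a+1/4}$ at infinity, it is therefore $\le K_{11}\,s^{a+1/4}$ on the whole range $s\ge-\log(T^*(x)-t_0(x_0))$ after enlarging $K_{11}$ if necessary. Adding the $\Omega_1$ and $\Omega_2$ contributions yields \eqref{claim1}.
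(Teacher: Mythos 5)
Your argument is correct, but it takes a genuinely different route from the paper's. The paper never splits the domain: it bounds the extra factor pointwise by $\log (2+w^2)\le C(\varepsilon)+|w|^{\varepsilon^2}$, controls the resulting integral $\int_{B}|w|^{p+1+\varepsilon^2}\log^{a}(2+\phi^2w^2)\rho \,{\mathrm{d}}y$ by H\"older interpolation between the weighted $L^{p+1}$ and $L^{p+1+\varepsilon}$ integrals (estimate \eqref{22fev2}), converts the latter into $\int_B|w|^{p+1+2\varepsilon}{\mathrm{d}}y$ via \eqref{equiv1} and \eqref{equiv4}, applies the Sobolev bound \eqref{16dec5bis}, and concludes with Young's inequality $|X|^{\nu}|Y|^{1-\nu}\le C|X|+C|Y|$; the exponent $\tfrac14$ arises from choosing $\varepsilon_2$ with $q_1(p+1+2\varepsilon_2)\varepsilon_2<\tfrac14$. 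You instead split $B$ along the level set $\log(2+w^2)=s^{1/4}$: the low region yields the first term of \eqref{claim1} immediately (using only the positivity of $\log^a(2+\phi^2w^2)$), and on the high region you exploit $|w|\ge (e^{s^{1/4}}-2)^{1/2}$ to extract a factor $|w|^{-\eta}\le Ce^{-c\eta s^{1/4}}$, after which the same two inputs the paper uses — the subconformal room $p+1<2^*$ and the polynomial bound \eqref{16dec5bis} from Proposition \ref{prop3.1} — make that contribution super-polynomially small. What your route buys: it avoids the interpolation/Young machinery, it produces a second term that is actually much smaller than $K_{11}s^{a+\frac14}$, and it makes transparent the paper's remark that any power $\nu>0$ could replace $\tfrac14$. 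What the paper's route buys is uniformity: its pointwise inequalities hold for all admissible $s$ and both signs of $a$ at once, whereas several of your estimates on $\Omega_2$ (namely $w^2\ge 1$ there, $\log(\phi^2(s))\ge cs$, and $\log(2+w^2)\le C_\eta|w|^{\eta}$) require $s$ above a fixed threshold; the remaining compact range of $s$ must be absorbed into $K_{11}$ by bounding the whole left-hand side directly through \eqref{16dec5bis} and bounding $s^{a+\frac14}$ below. Your closing sentence gestures at this but applies the enlargement to a bound whose derivation already presupposed large $s$; the fix is routine (and of the same nature as the thresholds $S_0$, $S_1$ the paper introduces in the following lemma), but it should be written out to make the statement hold on the full range $s\ge -\log (T^*(x)-t_0(x_0))$.
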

\begin{nb} Let us mention that, in  the first term on the right-hand side, the choice of the 
 power $\frac14$  is not optimal. In fact, with the same proof, one can show  the same estimate
with the  power   $\nu$, for any $\nu>0$, instead of the power $\frac14$. Let us remark that  we can construct a Lyapunov functional, when we have the estimate above for some  power $\nu$  such that   $\nu\in (0,1 )$  instead of the power $\frac14$.
\end{nb}
{\it Proof:} Let $\varepsilon \in (0,1)$. By
using  the inequality   $\log (2
+z^2)\le C(\varepsilon)+|z|^{\varepsilon^2}$,    for all $ z\in \R$, we conclude that
\begin{align}\label{22fev1}
\int_{B}  \! {|  w|^{p+1}}\log^{{a}}(2+\p^2 w^2  )\log (2+w^2 ) \w \y\le \! C \!\int_{B}  \! {|w|^{p+1}}\log^{{a}}(2+\p^2 w^2  ) \w \y\nonumber\\
+\int_{B}  {|  w|^{p+1+\varepsilon^2}}\log^{{a}}(2+\p^2 w^2  ) \w \y.\qquad \qquad
\end{align}
Furthermore, we apply the interpolation in Lebesgue spaces to
get
%Thanks to interpolation in Sobolev spaces
\begin{align}\label{22fev2}
\int_{B}  {|  w|^{p+1+\varepsilon^2}}\log^{{a}}(2+\p^2 w^2  ) \w \y\le\Big( \int_{B}  {|  w|^{p+1}}\log^{{a}}(2+\p^2 w^2  ) \w \y\Big)^{1-\varepsilon}\nonumber\\
\Big( \int_{B}  {|  w|^{p+1+\varepsilon}}\log^{{a}}(2+\p^2 w^2  ) \w \y\Big)^{\varepsilon}.
\end{align}
By combining\eqref{equiv1}, \eqref{equiv4} and   the inequality   $|z|^{\varepsilon}\le 1+|z|^{p+1+2\varepsilon}$,    
for all $ z\in \R$, we obtain
\begin{align}\label{22fev22}
\frac1{s^{ a}}\int_{B}  {|  w|^{p+1+\varepsilon}}\log^{{a}}(2+\p^2 w^2  ) \w \y\le
 C
+C \int_{B}  {|  w|^{p+1+2 \varepsilon}}  \y.
\end{align}
Since $ p<p_c=\frac{N+3}{N-1}$, we then  choose $ \varepsilon_1\le \varepsilon_0$ small enough, such that for all $\varepsilon \in (0,\varepsilon_1 ]$  we have $p+1+2 \varepsilon < 2^* $
where $2^*=\frac{2N}{N-2}$, if $N\ge 3$ and $2^*=\infty$,  if $N=2$.
Therefore,  estimate   \eqref{16dec5bis}  implies that, for all  $ s\geq -\log (T^*(x)-t_{0}(x_{0}))$
\begin{align}\label{22fev555}
\int_{B}  {| w|^{p+1+2 \varepsilon}}  \y\le  
(K_3 s^{q_1})
^{p+1+2\varepsilon},\quad \forall  \varepsilon\in [0,\varepsilon_1].
\end{align}
By  combining \eqref{22fev2}, \eqref{22fev22} and   \eqref{22fev555}, we deduce that, for all  $ s\geq -\log (T^*(x)-t_{0}(x_{0}))$,  for all $ \varepsilon\in ( 0,\varepsilon_1]$.
\begin{align}\label{22fev222}
\int_{B}  {|  w|^{p+1+\varepsilon^2}}\log^{{a}}(2+\p^2 w^2  ) \w \y\le K_{12}
 s^{q_1(p+1+2\varepsilon)\varepsilon}s^{\varepsilon a}
\Big( \int_{B}  {|  w|^{p+1}}\log^{{a}}(2+\p^2 w^2  ) \w \y\Big)^{1-\varepsilon}.
\end{align}
Thanks to  the basic inequality  $|X|^{\nu}|Y|^{1-\nu}\le C|X|+C |Y|$,  for all $ X,Y\in \R$, for all $ \nu \in   ( 0, 1 ), $ we conclude that, for all  $ s\geq -\log (T^*(x)-t_{0}(x_{0}))$,  for all $ \varepsilon\in ( 0,\varepsilon_1]$.
\begin{align}\label{22fev222bis}
\int_{B}  {|  w|^{p+1+\varepsilon^2}}\log^{{a}}(2+\p^2 w^2  ) \w \y
\le K_{13} s^{q_1(p+1+2\varepsilon)\varepsilon}
\Big(s^{ a}+ \int_{B}  {|  w|^{p+1}}\log^{{a}}(2+\p^2 w^2  ) \w \y\Big).
\end{align}
We choose  $ \varepsilon_2\in   ( 0,\varepsilon_1], $ such that $  q_1(p+1+2\varepsilon_2)\varepsilon_2<\frac14.$
Then,  by \eqref{22fev1} and  \eqref{22fev222bis}, 
we
easily obtain \eqref{claim1}. This concludes the proof  of Lemma \ref{lemmain}.
\Box

\bigskip

Thanks to estimate \eqref{claim1},  we can improve the  estimate related to the 
 control of  the time derivative of the  functional $E(w(s),s)$. More precisely, we prove  the following lemma:
\begin{lem}\label{2018lem31} There exists  $S_1>0 $ such that   for all   
  $s \geq \max ( -\log (T^*(x)-t_0(x_0)), S_1)$, we have  
\begin{align}\label{E01}
\frac{d}{ds}E(w(s),s)\le &-  \frac{3\alpha}{2}\iint (\partial_{s}w)^2\frac{\w}{1-|y|^2}\y
%\\
%&
+ 
 \frac{K_{14}}{s^{a+\frac74}}\iint |w|^{p+1}\log^a(2+\p^2w^2)\w \y\no\\
&+\frac{C}{s^2}\ibint  |\grad w|^2(1-|y|^2)\w \y+\frac{C}{s^2}\ibint  w^2\w \y   +\frac{K_{14}}{s^{\frac74}}.
 \end{align}
\end{lem}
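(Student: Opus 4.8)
The plan is to differentiate $E(w(s),s)$ directly, substitute the evolution equation \eqref{A} for $\partial_s^2w$, and integrate the gradient terms by parts against the weight $\w$. Differentiating the quadratic part of \eqref{E} produces $\iint\big(\partial_sw\,\partial_s^2w+\grad w\cdot\grad\partial_sw-(y\cdot\grad w)(y\cdot\grad\partial_sw)+\tfrac{2(p+1)}{(p-1)^2}w\partial_sw\big)\w\y$, and after integration by parts the operator $\tfrac1\rho\dv(\rho\grad w-\rho(y\cdot\grad w)y)$ appears, which I replace using \eqref{A}. Two cancellations occur at once: the $\tfrac{2p+2}{(p-1)^2}w$ term of \eqref{A} cancels the $\tfrac{2(p+1)}{(p-1)^2}w$ from the potential, and the $\partial_sw$-factor of the nonlinear term in \eqref{A} cancels the $\partial_sw$-part produced by differentiating $F(\phi w)$ (this uses $e^{-\frac{2ps}{p-1}}s^{\frac a{p-1}}=\mu(s)\phi(s)$, where $\mu(s):=e^{-\frac{2(p+1)s}{p-1}}s^{\frac{2a}{p-1}}$). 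What survives is a transport/damping block, the $\tfrac1s$-order linear terms, and the explicit $s$-derivative of the potential.

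I first extract the dissipation. The transport term gives $-2\iint(y\cdot\grad\partial_sw)\partial_sw\,\w\y=\iint(\partial_sw)^2\dv(\rho y)\y$, and since $\dv(\rho y)=N\rho-2\alpha|y|^2\tfrac{\rho}{1-|y|^2}$, combining with the damping contribution $-\tfrac{p+3}{p-1}\iint(\partial_sw)^2\w\y$ and the algebraic identity $N-\tfrac{p+3}{p-1}=-2\alpha$ (immediate from \eqref{alpha}) yields the clean term $-2\alpha\iint(\partial_sw)^2\tfrac{\w}{1-|y|^2}\y$; here strict subconformality $\alpha>0$ is used. Next I absorb the linear terms $\tfrac{2a}{(p-1)s}\iint(\partial_sw)^2\w\y$, $\tfrac{2a}{(p-1)s}\iint(y\cdot\grad w)\partial_sw\,\w\y$ and $\gamma(s)\iint w\partial_sw\,\w\y$ (with $\gamma(s)=O(1/s)$ by \eqref{defgamma}). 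Using $\w\le\tfrac{\w}{1-|y|^2}$ and Young's inequality, splitting each product to produce $(\partial_sw)^2\tfrac{\w}{1-|y|^2}$ against $(y\cdot\grad w)^2(1-|y|^2)\w\le|\grad w|^2(1-|y|^2)\w$ and $w^2(1-|y|^2)\w\le w^2\w$, each is controlled, for $s$ large, by a small multiple of the dissipation plus $\tfrac{C}{s^2}\iint|\grad w|^2(1-|y|^2)\w\y$ and $\tfrac{C}{s^2}\iint w^2\w\y$. Choosing the multiples to sum to at most $\tfrac\alpha2$ downgrades $-2\alpha$ to $-\tfrac{3\alpha}2$, which fixes the threshold $S_1$.

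The heart of the matter is the explicit $s$-derivative of the potential, $-\iint\big(\mu'(s)F(\phi w)+\mu(s)\phi'(s)w\,f(\phi w)\big)\w\y$. Setting $G(u):=uf(u)-(p+1)F(u)=2a\int_0^u\frac{|v|^{p+1}v}{v^2+2}\log^{a-1}(2+v^2)\,{\mathrm d}v$ and using $\tfrac{\mu'}\mu=-\tfrac{2(p+1)}{p-1}+\tfrac{2a}{(p-1)s}$, $\tfrac{\phi'}\phi=\tfrac2{p-1}-\tfrac a{(p-1)s}$, the $O(1)$ coefficients cancel and this reduces to $\iint\mu\big(\tfrac as F(\phi w)-(\tfrac2{p-1}-\tfrac a{(p-1)s})G(\phi w)\big)\w\y$. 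The crucial and hardest point is that the two surviving pieces cancel to leading order: since $\mu(s)\phi(s)^{p+1}=s^{-a}$ one has $\tfrac as\mu F\sim\tfrac a{p+1}s^{-a-1}|w|^{p+1}\log^a(2+\phi^2w^2)$, while $G(u)\sim\tfrac{2a}{p+1}|u|^{p+1}\log^{a-1}(2+u^2)$ together with $\log(2+\phi^2w^2)\sim\tfrac{4s}{p-1}$ gives $\tfrac2{p-1}\mu G\sim\tfrac a{p+1}s^{-a-1}|w|^{p+1}\log^a(2+\phi^2w^2)$, so the $s^{-(a+1)}$ contributions annihilate. What must be proved rigorously is the pointwise estimate
\[
\Big|\tfrac as\mu F(\phi w)-\big(\tfrac2{p-1}-\tfrac a{(p-1)s}\big)\mu G(\phi w)\Big|\le \tfrac{C}{s^{a+2}}|w|^{p+1}\log^a(2+\phi^2w^2)\big(1+\log(2+w^2)\big)+(\text{exp. small}),
\]
the factor $1+\log(2+w^2)$ recording exactly the relative error between $\log(2+\phi^2w^2)$ and $\tfrac{4s}{p-1}$. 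This is where the elementary two-scale estimates on the nonlinearity (of the type \eqref{equiv1}, \eqref{equiv4}) are used, and the control of the logarithmic defect is delicate precisely because $a$ may have either sign; this is the step I expect to be the main obstacle.

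Finally I integrate the pointwise bound over $B$. The part without the $\log(2+w^2)$ weight is immediate, since $\tfrac{1}{s^{a+2}}\iint|w|^{p+1}\log^a(2+\phi^2w^2)\w\y\le\tfrac{C}{s^{a+7/4}}\iint|w|^{p+1}\log^a(2+\phi^2w^2)\w\y$ for $s$ large (absorbing any slowly varying $\log s$ factors). For the part carrying $\log(2+w^2)$ I invoke Lemma \ref{lemmain}, which trades it for $K_{11}s^{1/4}\iint|w|^{p+1}\log^a(2+\phi^2w^2)\w\y+K_{11}s^{a+1/4}$; multiplying by $s^{-(a+2)}$ produces exactly $\tfrac{K_{14}}{s^{a+7/4}}\iint|w|^{p+1}\log^a(2+\phi^2w^2)\w\y+\tfrac{K_{14}}{s^{7/4}}$. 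Collecting this with the $-\tfrac{3\alpha}2$ dissipation and the two $\tfrac{C}{s^2}$ terms yields \eqref{E01}.
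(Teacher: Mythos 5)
Your route is the same as the paper's: the energy identity \eqref{E00} with dissipation $-2\alpha\int_B(\partial_sw)^2\frac{\rho(y)}{1-|y|^2}\,{\mathrm d}y$ (your computation via $\dv(\rho y)$ and $N-\frac{p+3}{p-1}=-2\alpha$ is exactly how \eqref{E00} is obtained), the same Young-inequality absorption of the $O(1/s)$ linear terms at cost $\frac\alpha2$, the same exact rewriting of the explicit $s$-derivative of the potential (your $G(u)=uf(u)-(p+1)F(u)$ equals $-(p+1)\big(F_1(u)+F_2(u)\big)$ in the notation \eqref{defF2}--\eqref{defF123}, so your reduced expression is precisely the paper's $\chi_1+\chi_2$), and the same final use of Lemma \ref{lemmain}. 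The gap lies in the one step you yourself flag as the main obstacle and do not prove: the pointwise inequality you propose is false as stated. Take $y$ with $w^2(y,s)=e^{-1}$ and $a\neq0$. Then $\phi^2w^2\gg1$ and, by the identity \eqref{16dec102}, $\log(2+\phi^2w^2)-\frac{4s}{p-1}=\log(2\phi^{-2}+w^2)-\frac{2a\log s}{p-1}\approx-1-\frac{2a\log s}{p-1}$, so the left-hand side of your inequality is of order $\frac{\log s}{s^{2}}$, whereas your right-hand side is $\frac{C}{s^{a+2}}|w|^{p+1}\log^a(2+\phi^2w^2)\big(1+\log(2+w^2)\big)\approx\frac{C}{s^{2}}$ plus an exponentially small term; for $s$ large the inequality fails by a factor $\log s$. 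The point is that the relative error between $\log(2+\phi^2w^2)$ and $\frac{4s}{p-1}$ is not $1+\log(2+w^2)$: it carries the additional term $-\frac{2a\log s}{p-1}$, and, worse, on the region where $w$ is exponentially small yet $\phi w^2\ge1$ it can be as negative as $-\frac{2s}{p-1}$, producing a contribution that is only polynomially small (of order $s^{-2}\log s$, via $\lambda e^{-\lambda(p+1)/2}\le C$ with $\lambda=\log(1/w^2)$), not exponentially small.

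The paper's proof avoids both pitfalls by (i) working one-sidedly -- only an upper bound on $\frac{d}{ds}E$ is needed, so no absolute values are taken -- and (ii) splitting $B$ into $A_1=\{\phi w^2\le1\}$ and $A_2=\{\phi w^2\ge1\}$ as in \eqref{27nov1}: on $A_1$ the integrand is genuinely exponentially small, while on $A_2$ the two bounds $\log(2+\phi^2w^2)\ge\frac{s}{p-1}$ \eqref{16dec12} and $\log(2+\phi^2w^2)-\frac{4s}{p-1}\le\log(2+w^2)+C\log s$ \eqref{16dec10} yield \eqref{131bis}, the extra $\log s$ being absorbed through $\frac{\log s}{s^{a+2}}\le s^{-a-\frac74}$. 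Your defects are all of size $O(s^{-2}\log s)$ and hence fit under the $K_{14}s^{-\frac74}$ budget, so your plan is reparable; but as written the central estimate is wrong, its corrected form requires exactly this region splitting and one-sided treatment (with additional care about the sign of the prefactor when $a<0$), and no proof of it is offered.
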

%
%\begin{nb}
%
%The power $a+\frac74$  is not optimal. With the same proof, one can show that the
%optimal power is  $a+2-\nu$ for any $\nu>0$. However   
%\end{nb}
{\it Proof}: Multiplying $\eqref{A}$ by $\partial_{s} w\!\ \w$ and integrating over  $B$, we obtain
\begin{align}
\frac{d}{ds}E(w(s),s)=& - 2\alpha \ibint (\partial_{s}w)^2\frac{\w}{1-|y|^2}\y\label{E00}\\
&+\underbrace{\frac{a}{(p+1)s^{a+1}}\iint  {|  w|^{p+1}}\log^{{a-1}}(2+\p^2 w^2  )\Big(\log (2+\p^2w^2 ) -\frac{4s}{p-1}\Big)\w \y
}_{\chi_{1}(s)}\no\\
&+\underbrace{\frac{2e^{-\frac{2(p+1)s}{p-1}}}{p-1}s^{\frac{2a}{p-1}}\iint \Big((p+1) F_2(\p w)-\frac{a}{s} F_1(\p w)-\frac{a}{s}F_2(\p w)\Big)\w \y}_{\chi_ 2(s)}\no\\
&+\underbrace{\gamma (s)\ibint w\partial_sw\w\y
+\frac{2a}{(p-1)s}\ibint (\partial_{s}w)^2\w\y}_{\chi_3(s)}\no\\
&+\underbrace{
\frac{2a}{(p-1)s}\ibint  y.\grad w\partial_{s}w\w \y
}_{\chi_4(s)},\no
 \end{align}
where
$F_1$ and $F_2$ are defined by
\begin{equation}
F_1(x)= -\frac{ 2a} {(p+1)^2}{| x|^{p+1}}\log^{{a-1}}(2+x^2  ),\label{defF2}
\end{equation}
and 
\begin{equation}\label{defF123}
F_2(x)=F(x)-\frac{xf(x)}{p+1}-F_1(x).
\end{equation}
Note that, in \eqref{E00}
 we  grouped  the main terms together.  In fact, it is easy to  control the terms  $\chi_{2}(s)$, $\chi_{3}(s)$  and  $ \chi_{4}(s)$. However, the  control of  the  term $\chi_{1}(s)$
  needs the  use of  the  additional information obtained in Lemma \ref{lemmain}.
%We would like now to find   an estimate for the term  $\chi_{1}(s) $. 
 More precisely,
%
%\medskip
%{\it Claim 1.} 
%There exists  $S_1>0 $ such that   for all   
%  $s \geq -\log (T^*(x)-t_0(x_0))$, we have  
%\begin{align}\label{claim2}
% \chi_{1}(s)\le &
% \frac{K_{14}}{s^{a+\frac74}}\iint |w|^{p+1}\log^a(2+\p^2w^2)\w \y + \frac{K_{14}}{s^{\frac74}}.
% \end{align}
%{\it Proof:} 
for all   
  $s \geq -\log (T^*(x)-t_0(x_0))$, we   divide $B$ into two parts
 \begin{equation}\label{27nov1}
A_{1}(s)=\{y \in B\,\,|\,\, \ps w^2(y,s)\leq  1\}\,\,{\rm and }\,\,A_{2}(s)=\{y \in B
\,\,|\,\, \ps w^2(y,s)\ge  1\}.
\end{equation}
Accordingly, we write  $\chi_1(s)=\chi_1^1(s)+\chi_1^2(s)$, where
\begin{align}
\chi_1^{1}(s)=&
\frac{a}{(p+1)s^{a+1}}\int_{A_1(s)}  {|  w|^{p+1}}\log^{{a-1}}(2+\p^2 w^2  )\Big(\log (2+\p^2w^2 ) -\frac{4s}{p-1}\Big)\w 
,%\label{130}
\nonumber\\
\chi_1^{2}(s)=&
\frac{a}{(p+1)s^{a+1}}\int_{A_2(s)}  {|  w|^{p+1}}\log^{{a-1}}(2+\p^2 w^2  )\Big(\log (2+\p^2w^2 ) -\frac{4s}{p-1}\Big)\w \y.%\label{131}
\nonumber
\end{align}
On the one hand, by using 
 the definition of the set $A_1(s)$ given  in \eqref{27nov1} and   the expression of $\ps$ in  \eqref{defphi},   we get, for all   $s \geq -\log (T^*(x)-t_0(x_0))$,
\begin{equation}\label{16dec1}
 | w|^{p+1}\log^{{a}}(2+\p^2 w^2  )\le  C\p^{-\frac{p+1}2}(s)\log^{|a|}(2+\ps) \le Ce^{-\frac{ps}{p-1}}.
\end{equation}
If we integrate \eqref{16dec1} over $A_1(s)$, we obtain
\begin{equation}\label{93}
\chi^1_1(s) \le Ce^{-s}.
\end{equation}
On the other hand, by using  the definition of the $\ps$ given by \eqref{defphi}, we write the identity
\begin{equation}\label{16dec102}
\log (2+\p^2 w^2)
 -\frac{4s}{p-1}=\log (2\p^{-2}+ w^2)-\frac{2a\log s}{p-1}.
\end{equation}
Furthermore, the exists   $S_0>0 $ such that   for all    
  $s \geq  S_0$, we have   $\phi  (s)\ge 1 $. Therefore,
 by exploting  \eqref{16dec102}, we write  for all 
 $s \geq \max ( -\log (T^*(x)-t_0(x_0)), S_0)$, 
\begin{equation}\label{16dec10}
\log (2+\p^2 w^2)
 -\frac{4s}{p-1}\le  \log (2+w^2)+C\log s.
\end{equation}
%Here,  the estimate proved in Subsection \ref{3.1} is crucial to conclude. More precisely,  
%by exploiting the expression of $\p$ given in \eqref{psi} and  the  estimate  \eqref{16dec5bis}, we conclude that 
%\begin{equation} \label{16dec11}
% {\log (2+\p^2 w^2)}
% -\frac{4s}{p-1}\le K \log s.
%\end{equation}
Also, by using the definition of the set $A_2(s)$ defined in \eqref{27nov1}, we can write 
 for all $ s \geq  -\log (T^*(x)-t_0(x_0)),$   if $y\in A_{2}(s)$, we have 
\begin{equation}\label{16dec12}
\log(2+\p^2w^2)\ge \log(\ps)\ge \frac{2s}{p-1}-\frac{a \log s}{p-1}.
\end{equation}
Clearly, the exists   $S_1>S_0 $ such that   for all    
  $s \geq  S_1$, we have   $  \frac{2s}{p-1}-\frac{a \log s}{p-1}\ge  \frac{s}{p-1}$. Therefore,
 by exploiting  \eqref{16dec10} and  \eqref{16dec12} we have   for all  $s \geq \max ( -\log (T^*(x)-t_0(x_0)), S_1)$, 
\begin{align}
\chi_1^{2}(s)\le &
\frac{C}{s^{a+2}}\int_{B}  {|  w|^{p+1}}\log^{{a}}(2+\p^2 w^2  )\log (2+w^2 ) \w \y\nonumber\\
&+\frac{C\log s}{s^{a+2}}\int_{B}  {|  w|^{p+1}}\log^{{a}}(2+\p^2 w^2  )\w \y.\label{131bis}
\end{align}
Adding  \eqref{claim1} and \eqref{131bis} we have for all  $s \geq \max ( -\log (T^*(x)-t_0(x_0)), S_1)$, 
\begin{equation}\label{16dec14}
\chi_1^{2}(s)\le 
\frac{K_{15}}{s^{a+\frac74}}\iint   {|  w|^{p+1}}\log^{{a}}(2+\p^2 w^2 )\w \y+\frac{K_{15}}{s^{\frac74}}.
\end{equation}
Note that, by using  the fact $\chi_1(s)=\chi_1^{1}(s)+\chi_1^{2}(s)$, \eqref{93} and \eqref{16dec14}, we get
\begin{equation}\label{13janva1}
\chi_1(s)\le 
\frac{K_{16}}{s^{a+\frac74}}\iint   {|  w|^{p+1}}\log^{{a}}(2+\p^2 w^2 )\w \y+\frac{K_{16}}{s^{\frac74}}.
\end{equation}
%Finally, it remains only to control the term $\chi_2(s)$.
Note from    \eqref{equiv2}  and  \eqref{equiv3}  that
\begin{equation}\label{15dec1}
\frac{1}{s}| F_1(\p w)|+| F_2(\p w)|\le   C+C \frac{ \p w}{s^2}f(\p w).
\end{equation}
By \eqref{E00}, \eqref{15dec1} and \eqref{id1},  we have, for all $s\ge  -\log (T^*(x)-t_0(x_0))$,
\begin{equation}\label{sigma11dec18}
\chi_2(s)\le 
\frac{C}{s^{a+2}}\iint |w|^{p+1}\log^a(2+\p^2w^2)\w \y+  C e^{-2s}.
\end{equation}
% Let us mention that the estimate  
%\eqref{sigma13} related to $\Sigma_{1}^{3}(s)+\Sigma_{1}^{4}(s)$ defined in \eqref{E00} is acceptable
% and does not need any improvement. 
Finally,  by using  the following  basic inequality
\begin{equation}\label{basic1}
ab\le \nu a^2+\frac1{\nu}b^2, \ \forall \nu >0,
\end{equation}
and the expression of $\gamma(s)$ defined in 
  \eqref{defgamma},  we  write, for all $s\geq -\log T^*(x)-t_0(x_0)$ 
\begin{equation}\label{sigma13}
\chi_{3}(s)+
\chi_{4}(s)\leq \frac\alpha2\ibint (\partial_{s}w)^2\frac{\w}{1-|y|^2}\y+ \frac{C}{s^2}\ibint  \Big(|\grad w|^2(1-|y|^2)+w^2\Big)\w \y.
\end{equation}
%The result \eqref{E01}  follows  immediately from  \eqref{E00},  \eqref{sigma111}, \eqref{sigma12} and  \eqref{sigma13},
%which ends the proof of Lemma \ref{lem22}.
The result \eqref{E01} derives immediately from  \eqref{E00}, \eqref{sigma13},  \eqref{13janva1}, \eqref{sigma11dec18},  and  the identity \eqref{E00},
which ends the proof of Lemma \ref{2018lem31}
\Box

\medskip

Let us now recall the following result from  \cite{HZjmaa2020}, where 
we  write  an estimate  on the  functional $J(w(s),s)$ defined by:
\begin{equation}\label{JJ}
J(w(s),s) = 
- \frac{1}{s} \ibint  w\partial_{s}w\w\y.
\end{equation}
 \begin{lem}\label{LemJ}
For  all $s \geq \max  (-\log T^*(x),1)$, we have 
\begin{align}\label{6nov20181}
\frac{d}{ds}J(w(s),s)\ \  \le&\ \ \frac{p+3}{2s}E(w(s),s)-\frac{p+7}{4s}\ibint   (\partial_{s}w)^2\w \y\\
&-\frac{p-1}{4s}\ibint ( |\grad w|^2-(y.\grad w)^2)\w\y
- \frac{p+1}{2(p-1)s} \ibint  w^2\w\y\no\\
&
%-\frac{m(p+1)}{2(p-1)s} \ibint  w^2\w\y
-\frac{p-1}{2(p+1)s^{a+1}}\ibint |w|^{p+1}\log^a(2+\p^2 w^2)  
\w\y+\Sigma_2(s),\no
\end{align}
where  $\Sigma_2(s)$ satisfies
  \begin{align}\label{DEC1}
  \Sigma_{2}(s)\leq& \frac{C}{\sqrt{s}}
 \ibint (\partial_{s}w)^2 \frac{\w}{1-|y|^2}\y+\frac{C}{s\sqrt{s}}\ibint   |\grad w|^2(1-|y|^2)\w\y \\
&+\frac{C}{s\sqrt{s}}\ibint  w^2\w\y+\frac{C}{s^{a+2}}\ibint |w|^{p+1}\log^a(2+\p^2w^2)\w \y+  C e^{-2s}.\no
\end{align}
\end{lem}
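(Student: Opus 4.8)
The plan is to compute $\frac{d}{ds}J(w(s),s)$ directly from the definition \eqref{JJ} and the evolution equation \eqref{A}, and then to reorganize the resulting terms so that the leading-order contribution is precisely $\frac{p+3}{2s}E(w(s),s)$, the remaining algebraically explicit terms are those displayed in \eqref{6nov20181}, and everything else is absorbed into the error $\Sigma_2(s)$ estimated in \eqref{DEC1}.

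First I would differentiate. From $J(w(s),s)=-\frac1s\ibint w\partial_s w\w\y$ one obtains
\begin{equation*}
\frac{d}{ds}J=\frac1{s^2}\ibint w\partial_s w\w\y-\frac1s\ibint (\partial_s w)^2\w\y-\frac1s\ibint w\,\partial_s^2 w\,\w\y,
\end{equation*}
into which I substitute $\partial_s^2 w$ from \eqref{A}. The decisive manipulation concerns the spatial operator: since $\w$ vanishes on $\partial B$, integration by parts produces no boundary term and
\begin{equation*}
-\frac1s\ibint w\,\dv(\w\grad w-\w(y.\grad w)y)\y=\frac1s\ibint (|\grad w|^2-(y.\grad w)^2)\w\y,
\end{equation*}
which supplies the good gradient term. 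The zero-order linear term contributes $\frac{2(p+1)}{(p-1)^2 s}\ibint w^2\w\y$, and, using the identity \eqref{id1}, the nonlinear term contributes exactly $-\frac1{s^{a+1}}\ibint |w|^{p+1}\log^a(2+\p^2 w^2)\w\y$.

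Next comes the algebraic repackaging. Expanding $\frac{p+3}{2s}E(w(s),s)$ from \eqref{E} and comparing rational coefficients, one checks that the kinetic, gradient and $w^2$ parts computed above equal the corresponding parts of $\frac{p+3}{2s}E$ minus exactly the explicit corrections $\frac{p+7}{4s}\ibint(\partial_s w)^2\w\y$, $\frac{p-1}{4s}\ibint(|\grad w|^2-(y.\grad w)^2)\w\y$ and $\frac{p+1}{2(p-1)s}\ibint w^2\w\y$ appearing in \eqref{6nov20181}; this is a direct check of fixed rational coefficients. For the nonlinear contribution I would use the splitting $F(u)=\frac{uf(u)}{p+1}+F_1(u)+F_2(u)$ from \eqref{defF2}--\eqref{defF123}: the term $-\frac{p+3}{2s}\ibint e^{-\frac{2(p+1)s}{p-1}}s^{\frac{2a}{p-1}}F(\p w)\w\y$ inside $\frac{p+3}{2s}E$, combined with the explicit term $-\frac{p-1}{2(p+1)s^{a+1}}\ibint|w|^{p+1}\log^a(2+\p^2 w^2)\w\y$ of \eqref{6nov20181}, reproduces the directly computed $-\frac1{s^{a+1}}\ibint|w|^{p+1}\log^a(2+\p^2 w^2)\w\y$ up to the remainder $\frac{p+3}{2s}\ibint e^{-\frac{2(p+1)s}{p-1}}s^{\frac{2a}{p-1}}(F_1(\p w)+F_2(\p w))\w\y$, which \eqref{15dec1} (together with \eqref{id1}) allows to bound by $\frac{C}{s^{a+2}}\ibint|w|^{p+1}\log^a(2+\p^2 w^2)\w\y+Ce^{-2s}$, i.e.\ by an admissible term of $\Sigma_2$.

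The remaining terms are genuine cross terms, and handling them is the only delicate point. The damping coefficient $\frac{p+3}{p-1}-\frac{2a}{(p-1)s}$, the prefactor $\frac1{s^2}$, the coefficient $\gamma(s)$ from \eqref{defgamma} and the factor $\frac{2a}{(p-1)s}$ all produce terms of the schematic form $\frac{C}{s}\ibint w\partial_s w\w\y$ or of lower order; most importantly, $-2y.\grad\partial_s w$ yields $\frac2s\ibint w\,y.\grad\partial_s w\,\w\y$, which after integration by parts in $y$ becomes $-\frac2s\ibint\partial_s w(\w\,y.\grad w+w\,y.\grad\w+Nw\w)\y$, where $y.\grad\w=-2\alpha|y|^2\frac{\w}{1-|y|^2}$ is exactly what forces the weight $\frac{\w}{1-|y|^2}$ in \eqref{DEC1}. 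The main obstacle is to split each such cross term by the weighted Young inequality \eqref{basic1} with the free parameter chosen as a suitable power of $s$ (concretely $\nu\sim\sqrt{s}$), so that $\partial_s w$ is charged with the weight $\frac{\w}{1-|y|^2}$ and coefficient $\frac{C}{\sqrt{s}}$, while its partner ($w$, or $y.\grad w$ paired with the complementary weight $(1-|y|^2)\w$) is charged with coefficient $\frac{C}{s\sqrt{s}}$. This balancing is precisely what reproduces the form of $\Sigma_2(s)$ stated in \eqref{DEC1}, and completes the proof.
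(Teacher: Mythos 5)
Your overall architecture is the same as that of the proof the paper relies on (the paper itself only cites Lemma 2.2 of \cite{HZjmaa2020}): differentiate $J$, substitute \eqref{A}, integrate by parts, recombine the quadratic and nonlinear parts into $\frac{p+3}{2s}E(w(s),s)$ minus the four explicit corrections, and absorb the rest into $\Sigma_2$. Your coefficient checks are correct ($\tfrac{p+3}{4}+1=\tfrac{p+7}{4}$, $\tfrac{p+3}{4}-1=\tfrac{p-1}{4}$, $\tfrac{p+1}{(p-1)^2}\big(\tfrac{p+3}{2}-2\big)=\tfrac{p+1}{2(p-1)}$, $1-\tfrac{p+3}{2(p+1)}=\tfrac{p-1}{2(p+1)}$), and your treatment of the $F_1,F_2$ remainder via \eqref{15dec1} and \eqref{id1} does give the admissible terms $\frac{C}{s^{a+2}}\int_B|w|^{p+1}\log^a(2+\phi^2w^2)\rho\,dy+Ce^{-2s}$. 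However, there is one genuine gap, in the last paragraph: the cross terms in which $w$ itself (rather than $\nabla w$) gets paired with the singular weight cannot be closed by the weighted Young inequality \eqref{basic1} alone. Concretely, the integration by parts you perform produces
\begin{equation*}
\frac{4\alpha}{s}\int_B w\,\partial_s w\,\frac{|y|^2\rho}{1-|y|^2}\,dy,
\end{equation*}
and any pointwise splitting that charges $(\partial_s w)^2$ with the weight $\frac{\rho}{1-|y|^2}$ necessarily charges $w^2$ with a weight at least of order $\frac{|y|^4\rho}{1-|y|^2}$ (for $c\,ab\le \nu W_1a^2+\nu^{-1}W_2b^2$ to hold for all $a,b$ one needs $4W_1W_2\ge c^2$ pointwise). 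That weight is not bounded by $C\rho$ near $\partial B$, so the resulting term is not among those allowed in \eqref{DEC1}. The same obstruction occurs for the term $\frac{2a}{(p-1)s^2}\int_B w\,(y.\nabla w)\,\rho\,dy$, whichever factor you try to protect.

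The missing ingredient is the Hardy-type inequality
\begin{equation*}
\int_B w^2\,\frac{|y|^2\rho}{1-|y|^2}\,dy\;\le\; C\int_B|\nabla w|^2(1-|y|^2)\rho\,dy+C\int_B w^2\rho\,dy,
\end{equation*}
precisely the inequality the paper quotes later in the proof of Lemma \ref{g}, citing the appendix of \cite{MZajm03}. With it, after Young's inequality with your choice $\nu\sim\sqrt{s}$, the singular $w^2$ integrals become $\frac{C}{s\sqrt{s}}\int_B|\nabla w|^2(1-|y|^2)\rho\,dy+\frac{C}{s\sqrt{s}}\int_B w^2\rho\,dy$, both admissible in $\Sigma_2$; this is also where part of the gradient term in \eqref{DEC1} actually comes from, not only from the $\partial_s w\,(y.\nabla w)$ pairing you describe. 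Once this inequality is inserted at these two places, your argument goes through and coincides with the cited proof.
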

 {\it Proof}: See Lemma 2.2  in    \cite{HZjmaa2020}.

%
%
%
%
%  \begin{eqnarray}\label{R10}
%  \Sigma_{2}(s)&\leq& \frac{\alpha}{2}\iint (\partial_{s}w)^2\frac{\w}{1-y^2}\y+
% \frac{m Ce^{-2s}}
% {s^2\p^2(s) } \ibint \ps wf(\ps w)\w\y\no\\
%&&+\frac{Cm^2}{s^2}\iint  |\nabla w|^2(1-y^2)\w\y- \frac{(p+1)m}{4(p-1)s}(1-\frac{Cm}{s})\iint  w^2\w\y.
%\end{eqnarray}
 \Box

\medskip

With Lemmas  \ref{2018lem31} and  \ref{LemJ},  we are in a position to state and prove
Theorem \ref{t1}', which is a uniform version of Theorem \ref{t1} for $x$ near $x_0$.

%%%%%%%%%%%%%%%%%%%%%%%%%%%

\bigskip

\noindent {\bf{Theorem} \ref{t1}'} {\it (Existence  of a Lyapunov functional for equation }
{\eqref{A}})\\
\label{t1bis}
{\it
Consider   $u $    a solution of ({\ref{gen}}) with blow-up graph
$\Gamma:\{x\mapsto T(x)\}$ and  $x_0$  a non characteristic point.
Then there exists $t_1(x_0)\in [0,T(x_0)) $ such that, 
% the  solution of equation $\eqref{A}$ satisfying
 %$(w_{x_0},\partial_{s}w_{x_0})\in C([-\log(T(x_0)),\,\,+\infty),\H)$, it holds that $L(w_{x_0}(s),s)$ defined in \eqref{10dec2} satisfies the following inequality,
 for all $T_0\in  (t_1(x_0),T(x_0)]$,  for all  $s\ge  -\log(T_0-t_1(x_0))$  and $x\in \er$, where $|x-x_0|\le \frac{T-t}{\delta_0(x_0)}$,
 we have
\begin{equation}\label{t1lyap}
 L(w(s+1),s+1)-L(w(s),s) \leq -\alpha \int_{s}^{s+1}\iint (\partial_{s}w)^2\frac{\w}{1-|y|^2}\y \t,
\end{equation}
where  $w=w_{x,T^*(x)}$ and $T^*(x)$  is defined in \eqref{18dec1}.
%Moreover,  there exists $S_2\ge S_1$ such that, 
%  \begin{equation}\label{t1positive}
%L(w_{x_0}(s),s)\ge 0, \ \ \forall s\ge \max(-\log(T(x_0)),S_2).
%\end{equation}
}

\medskip

  {\it{Proof of Theorem \ref{t1}':}}
By exploiting the defintion of $L_0(w(s),s)$ in \eqref{5jan1},
we  can write easily
\begin{equation}\label{14jan1}
\frac{d}{ds}L_0(w(s),s)=\frac{d}{ds}E(w(s),s) + \frac{1}{\sqrt{s}}\frac{d}{ds}J(w(s),s)-  \frac{1}{2s\sqrt{s}}J(w(s),s).
\end{equation}
With Lemmas  \ref{2018lem31} and  \ref{LemJ} and
the following inequality
$$
\frac{1}{2s^2\sqrt{s}}\iint   w\partial_{s}w\w \y+\frac{p+3}{2s^3}\iint   w\partial_{s}w\w \y\le 
\frac{C}{s^2}\iint   (\partial_{s}w)^2\w \y+\frac{C}{s^2}\iint   w^2\w \y,$$
 allows to prove that 
for all $s \geq \max  (-\log(T^*(x)-t_0(x_0)), S_{1})$, we have 
\begin{align*}
\frac{d}{ds}L_0(w(s),s)\le &-  (\frac{3\alpha}{2}- \frac{C}{s})\iint (\partial_{s}w)^2\frac{\w}{1-|y|^2}\y+ \frac{p+3}{2s\sqrt{s}}L_0(w(s),s)\no\\
&- \frac{1}{s\sqrt{s}}( \frac{p+1}{2(p-1)} -\frac{C}{\sqrt{s}})\iint  w^2\w \y  \\
&-\frac{1}{s\sqrt{s}}(\frac{p+7}{4}-\frac{C}{\sqrt{s}})\iint   (\partial_{s}w)^2\w \y\\
&-\frac1{s\sqrt{s}}(\frac{p-1}{4}-\frac{C}{\sqrt{s}})\iint  |\grad w|^2(1-|y|^2)\w\y
\no\\
&-\frac1{s^{a+\frac32}}(\frac{p-1}{2(p+1)}-  \frac{K_{14}}{s^{\frac14}}-\frac{C}{s})\iint |w|^{p+1}\log^a(2+\p^2w^2)  
\w\y\\
&+ C \frac{e^{-2s}}{\sqrt{s}}+\frac{K_{14}}{s^{\frac74}}.
  \end{align*}
Again,  choosing  $S_2> -\log(T(x_0)-t_0(x_0))$ large enough,  this  implies that  for all $s \geq \max  (-\log(T^*(x)-t_0(x_0)), S_{2})$, we have 
\begin{equation}\label{17dec1}
\frac{d}{ds}L_0(w(s),s)\le -\alpha \iint (\partial_{s}w)^2\frac{\w}{1-|y|^2}\y+ \frac{p+3}{2s\sqrt{s}}L_0(w(s),s)
+\frac{K_{15}}{s^{\frac74}}.
\end{equation}
Recalling that,
$$L(w(s),s)=\exp\Big(\frac{p+3}{\sqrt{s}}\Big) L_0(w(s),s)+\frac{\theta }{s^{\frac34}},$$  
   we get from straightforward computations 
  \begin{equation}\label{17dec2}
  \frac{d}{ds}L(w(s),s) =-\frac{p+3}{2s\sqrt{s}}\exp\Big(\frac{p+3}{\sqrt{s}}\Big) L_0(w(s),s)+\exp\Big(\frac{p+3}{\sqrt{s}}\Big)\frac{d}{ds}L_0(w(s),s)-\frac{3\theta}{4s^{\frac74}}.
\end{equation}
Therefore, estimates $\eqref{17dec1}$ and $\eqref{17dec2}$ lead to the following crucial estimate:
 \begin{equation}
 \frac{d}{ds}L(w(s),s)\leq -\alpha \exp\Big(\frac{p+3}{\sqrt{s}}\Big) \iint (\partial_{s}w)^2 \frac{\w}{1-|y|^2}\y
+\Big(K_{15}\exp\Big(\frac{p+3}{\sqrt{s}}\Big)-\frac{3\theta}4 \Big)\frac{ 1}{s^{\frac74}}.
\end{equation}
Since we have $1\leq \exp\Big(\frac{p+3}{\sqrt{s}}\Big)\leq \exp\Big(\frac{p+3}{\sqrt{S_2}}\Big)$, 
we then choose $\theta$ large enough, so that $
K_{15}\exp\Big(\frac{p+3}{\sqrt{s}}\Big)-\frac{3\theta}4 
\leq 0$, which yields,  for all $s \geq \max  (-\log(T^*(x)-t_0(x_0)), S_{2})$,
$$\frac{d}{ds}L(w(s),s)\leq -\alpha \iint (\partial_{s}w)^2 \frac{\w}{1-|y|^2}\y.$$
A simple integration between $s$ and $s+1$ ensures the result
\eqref{t1lyap}, where 
\begin{equation}\label{new19dec1}
 t_1(x_0)=\max(T(x_0)-e^{-S_2},t_0(x_0)).
\end{equation}
This concludes the proof
of  Theorem \ref{t1}'.
\Box

\medskip

% \noindent
 We now claim the following lemma:
\begin{lem}\label{L19}
 There exists $S_3\ge S_2$ such that, if
  $L(w(s_3),s_3)<0$ for some $s_3\ge \max(S_3,-\log (T^*(x)-t_1(x_0)))$, then $w$
  %=w_{x,T^*(x)}$
blows up in some finite time $s_4>s_3$.
\end{lem}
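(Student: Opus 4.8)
The plan is to prove this blow-up criterion by the classical concavity (Levine/Kaplan) method applied to the weighted $L^2$ mass in similarity variables. I would set $I(s)=\iint w^2\w\y$ and differentiate twice, using \eqref{A}: multiplying by $w\w$ and integrating over $B$ (the divergence term integrates by parts to $-\iint(|\grad w|^2-(y.\grad w)^2)\w\y$, the boundary term vanishing since $\w=(1-|y|^2)^\alpha$ with $\alpha>0$), this gives
\[
\tfrac12 I''(s)=\iint(\partial_s w)^2\w\y-\iint\big(|\grad w|^2-(y.\grad w)^2\big)\w\y-\tfrac{2(p+1)}{(p-1)^2}\iint w^2\w\y+\iint e^{-\frac{2(p+1)s}{p-1}}s^{\frac{2a}{p-1}}\p wf(\p w)\w\y+R(s),
\]
where $R(s)$ collects the non-autonomous drift and damping contributions coming from the $\frac1s$ terms of \eqref{A}. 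The decisive structural point, already encoded in \eqref{id1} and the elementary estimates \eqref{equiv2}--\eqref{equiv3}, is that $\p wf(\p w)=(p+1)F(\p w)+(\text{logarithmic-order correction})$, so the nonlinear term above equals, up to controllable errors, $(p+1)$ times the one appearing in $E$. Substituting $\iint e^{\cdots}F(\p w)\w\y$ from the definition \eqref{E} and collecting terms, every quadratic coefficient turns out positive, and I expect to reach
\[
\tfrac12 I''(s)=\tfrac{p+3}2\iint(\partial_s w)^2\w\y+\tfrac{p-1}2\iint\big(|\grad w|^2-(y.\grad w)^2\big)\w\y+\tfrac{p+1}{p-1}\iint w^2\w\y-(p+1)E(w(s),s)+\widetilde R(s).
\]

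Next I would bring in the negativity of the Lyapunov functional. By the monotonicity of $L$ established in Theorem \ref{t1}$'$ (see \eqref{t1lyap}), the hypothesis $L(w(s_3),s_3)<0$ propagates to $L(w(s),s)\le L(w(s_3),s_3)<0$ for all $s\ge s_3$; unwinding \eqref{5jan1}--\eqref{10dec2} then yields $L_0(w(s),s)\le -c<0$ with $c$ depending on $L(w(s_3),s_3)$. Writing $E=L_0+\frac{1}{s\sqrt{s}}\iint \partial_s w\,w\,\w\y$ and bounding the correction by Cauchy--Schwarz, the term $-(p+1)E$ supplies a genuinely positive quantity $(p+1)c$ plus an $O(s^{-3/2})\big(\iint(\partial_s w)^2\w\y+\iint w^2\w\y\big)$ error. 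For $s\ge S_3$ large this error, together with $\widetilde R(s)$ and the logarithmic discrepancy absorbed into the positive quadratic terms, leaves me with the two facts I need: $I''(s)\ge (p+3)\iint(\partial_s w)^2\w\y$ and $I''(s)\ge\delta'>0$. I expect this absorption step --- tracking the $\frac1s$ non-autonomous terms and the gap between $\p wf(\p w)$ and $(p+1)F(\p w)$, and showing they are dominated once $s$ is large, while invoking the polynomial bounds of Section \ref{section2} --- to be the main technical obstacle.

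Finally I would run the concavity argument. By Cauchy--Schwarz, $\big(I'(s)\big)^2=4\big(\iint w\,\partial_s w\,\w\y\big)^2\le 4\,I(s)\iint(\partial_s w)^2\w\y$, so for any $\sigma\in(0,\tfrac{p-1}4]$,
\[
I(s)I''(s)-(1+\sigma)\big(I'(s)\big)^2\ge (p-1-4\sigma)\,I(s)\iint(\partial_s w)^2\w\y\ge 0.
\]
Hence $\big(I(s)^{-\sigma}\big)''\le 0$, so $I^{-\sigma}$ is positive and concave on $[s_3,\infty)$. Since $I''\ge\delta'>0$ forces $I'$ to become and remain positive, one has $\big(I^{-\sigma}\big)'(s_0)<0$ at some $s_0$; a positive concave function with a strictly negative derivative must vanish at a finite $s_4>s_3$, which means $I(s)\to+\infty$ as $s\uparrow s_4$. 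Therefore $w$ cannot stay bounded up to $s_4$, i.e. it blows up in finite time, which is the assertion of the lemma.
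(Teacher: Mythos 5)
Your overall plan --- propagate the negativity of $L$ forward in time via Theorem \ref{t1}${}'$, convert it into $L_0\le -c<0$ and then into $-(p+1)E\ge (p+1)c$ minus an $O(s^{-3/2})$ correction, and run a Levine concavity argument on $I(s)=\iint w^2\w\y$ --- is the same family of argument as the proof the paper points to (the paper itself only writes one line, deferring to \cite{HZjmaa2020}, whose criterion descends from the Antonini--Merle one). Your endgame is also mechanically correct: \emph{if} one has the two pointwise inequalities $I''(s)\ge (p+3)\iint(\partial_s w)^2\w\y$ and $I''(s)\ge\delta'>0$, then Cauchy--Schwarz gives $I I''-(1+\sigma)(I')^2\ge (p-1-4\sigma)\,I\iint(\partial_s w)^2\w\y\ge 0$ for $\sigma\le\frac{p-1}{4}$, and the concavity of $I^{-\sigma}$ forces $I\to\infty$ in finite time. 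The problem is that those two inequalities are exactly what you have not proved, and they cannot be obtained by the ``absorption'' you describe. First, you mischaracterize $R(s)$: the terms you sweep into it are not ``$\frac1s$ terms''. The damping term $-\big(\frac{p+3}{p-1}-\frac{2a}{(p-1)s}\big)\partial_s w$ of \eqref{A} contributes $-\frac{p+3}{2(p-1)}I'(s)$ up to $O(1/s)$ corrections: an order-one, sign-indefinite multiple of $I'$ that cannot be hidden in positive quadratic terms and must instead be carried through the concavity inequality itself (exponential reweighting, or an ODE lemma tolerating $II''-(1+\sigma)(I')^2\ge -C\,I\,|I'|$). Second, and more seriously, the term $-2y\cdot\grad\partial_s w$, once multiplied by $w\w$ and integrated by parts (using $y\cdot\grad\rho=-2\alpha\frac{|y|^2}{1-|y|^2}\w$), produces $2\iint \partial_s w\,(y\cdot\grad w)\w\y-4\alpha\iint w\,\partial_s w\,\frac{|y|^2}{1-|y|^2}\w\y$ plus tame terms. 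Both pieces can only be controlled through the singular-weight quantity $\iint(\partial_s w)^2\frac{\w}{1-|y|^2}\y$ (note that $\iint(y\cdot\grad w)^2\w\y$ is \emph{not} dominated by $\iint(|\grad w|^2-(y\cdot\grad w)^2)\w\y$, since the latter degenerates at $|y|=1$). Your positive term $\frac{p+3}{2}\iint(\partial_s w)^2\w\y$ carries the regular weight $\w$ and cannot absorb this; the Lyapunov structure controls the singular-weight dissipation only in time average, on unit intervals, via $L(s)-L(s+1)$ --- and under your contradiction hypothesis $L$ is decreasing with no a priori lower bound, so not even the time-integrated dissipation is finite a priori.

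In other words, the step you flag as ``the main technical obstacle'' is not a technicality to be expected to go through: it is the entire content of the lemma, and it is precisely where a pointwise differential inequality is unavailable. The proof the paper relies on resolves this by working with time-integrated (unit-interval) versions of your identity, keeping the dissipation term explicit and pairing it against the monotonicity of $L$ and the energy itself, rather than asserting $I''\ge\delta'>0$ pointwise. As written, your argument does not close.
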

 {\it Proof:}
The argument is the same  as  the similar part in  
\cite{HZjmaa2020}.

\Box

\subsection{Proof of Theorem \ref{t2} }\label{3.3}

In this subsection, we prove Theorem \ref{t2}.  Note that the lower bound follows from the finite speed of propagation and the wellposedness  in $H^1\times L^2$.
For a detailed argument in the similar case of equation (\ref{NLW}), see Lemma 3.1 (page 1136) in \cite{MZimrn05}.\\
\noindent We consider $u$  a solution of (\ref{gen}) which is defined under the graph of $x\mapsto T(x)$, and $x_0$ a non characteristic point.
Let 
\begin{equation}\label{21dec1}
 t_2(x_0)=\max(T(x_0)-e^{-S_3},t_1(x_0)).
\end{equation}
Given some $T_0\in (t_2(x_0),T(x_0)]$, for all
$x\in \er$ is such that $|x-x_0|\le \frac{T_0-t_2(x_0)}{\delta_0(x_0)}$, where $\delta(x_0)$ is defined in (\ref{nonchar}), we aim   at bounding $\|(w,\partial_s w)(s)\|_{H^1\times L^2(B)}$ for $s$ large.

\medskip

%\no {\bf{Remarque:}} The function $w$ is a global solution of equation (\ref{B}). Moreover, there exist $M_0>0$, such %that we have
%$$\|(u(s),u_{\tau}(s))\|_{H^{1}_{loc,u}(\er^N)\times L^{2}_{loc,u}(\er^N)}\le M_0,\qquad \forall s\in [-\log T,s_0] %$$

\noindent As in  \cite{ HZnonl12, omar1}, by combining Theorem  \ref{t1}' 
 and  Lemma \ref{L19}  we get the following bounds:
\begin{cor}\label{22jan1} { (Bound on  $L_0(w(s),s)$)}.
For all $T_0\in (t_2(x_0),T(x_0)]$, for all $s\ge  -\log (T_0-t_2(x_0))$
 and $x\in \er^N$ where $|x-x_0|\le \frac{T_0-t}{\delta_0(x_0)}$,
 we have
\begin{equation}\label{2018N00}
-C\leq L_{0}(w(s),s) \leq C L_{0}(w(\tilde{s}_{2}),\tilde{s}_{2})+C ,
\end{equation}
where $\tilde{s}_{2}=-\log (T^*(x)-t_2(x_0))$.\\
Moreover, 
 for all $s\ge  -\log (T^*(x)-t_2(x_0))$, we have
\begin{equation}\label{22janv2}
\int_{s}^{s+1}
 \iint (\partial_{s}w)^2 \frac{\w}{1-|y|^2}\y \s \le K_{16},
\end{equation}
where  $K_{16}=K_{16}(a,p,T^*(x),\|(u(t_2),u_t(t_2))\|_{
H^{1}\times L^{2}(B(x_0,\frac{T_0-t_2(x_0)}{\delta_0(x_0)})
)})$,
$C=C(a,p)$    and   $\delta_0(x_0)\in (0,1)$ is defined in (\ref{nonchar}).
\end{cor}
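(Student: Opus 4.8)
The plan is to sandwich the Lyapunov functional $L(w(s),s)$ between an upper bound inherited from its monotonicity (Theorem \ref{t1}') and a lower bound coming from the blow-up criterion (Lemma \ref{L19}), and then to transfer these bounds to $L_0$ and to the dissipation integral using the explicit relation \eqref{10dec2} between $L$ and $L_0$. First I would record that, since $w=w_{x,T^*(x)}$ corresponds to the solution $u$ at the scaling center $x$ with $T^*(x)\le T(x)$, the function $w$ is defined for all $s$ and does not blow up in finite similarity time. By the contrapositive of Lemma \ref{L19}, this forces
\begin{equation*}
L(w(s),s)\ge 0\quad\text{for all }s\ge \max\big(S_3,-\log(T^*(x)-t_1(x_0))\big).
\end{equation*}

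For the upper bound, I would use that the differential inequality established at the end of the proof of Theorem \ref{t1}' gives $\frac{d}{ds}L(w(s),s)\le 0$, so $L$ is non-increasing; hence $L(w(s),s)\le L(w(\tilde s_2),\tilde s_2)$ for $s\ge\tilde s_2$. Plugging in \eqref{10dec2}, recalling that $1\le \exp\big(\frac{p+3}{\sqrt s}\big)\le \exp\big(\frac{p+3}{\sqrt{S_2}}\big)$ and that $\theta s^{-3/4}$ is bounded on the relevant range, the bound on $L$ converts into
\begin{equation*}
L_0(w(s),s)\le C\,L_0(w(\tilde s_2),\tilde s_2)+C,
\end{equation*}
while the sign condition $L\ge 0$ converts into $L_0(w(s),s)\ge -\theta s^{-3/4}\exp\big(-\frac{p+3}{\sqrt s}\big)\ge -C$, with $C=C(a,p)$. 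This yields \eqref{2018N00}.

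For the dissipation estimate \eqref{22janv2}, I would integrate the one-step Lyapunov inequality \eqref{t1lyap} of Theorem \ref{t1}',
\begin{equation*}
\alpha\int_s^{s+1}\iint(\partial_s w)^2\frac{\w}{1-|y|^2}\y\,\t\le L(w(s),s)-L(w(s+1),s+1),
\end{equation*}
and then bound the right-hand side from above by $L(w(\tilde s_2),\tilde s_2)$, using the monotonicity to control $L(w(s),s)$ and the lower bound $L(w(s+1),s+1)\ge 0$. It then remains to bound $L(w(\tilde s_2),\tilde s_2)$ by a data-dependent constant: at the fixed time $\tilde s_2$ the energy $E(w(\tilde s_2),\tilde s_2)$, and hence $L(w(\tilde s_2),\tilde s_2)$, is controlled by $\|(w,\partial_s w)(\tilde s_2)\|_{H^1\times L^2(B)}$, which by finite speed of propagation is controlled by the norm of $(u,\partial_t u)$ at time $t_2(x_0)$; this produces the constant $K_{16}$.

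The main obstacle is not the algebra but the two structural inputs. The first is verifying that $w_{x,T^*(x)}$ is genuinely global in $s$, so that Lemma \ref{L19} can be applied in contrapositive form to obtain $L\ge 0$; this rests on $T^*(x)\le T(x)$ through the cone and finite speed of propagation structure at the non characteristic point $x_0$. The second is the careful bookkeeping of the thresholds $S_2\le S_3$, $t_1(x_0)\le t_2(x_0)$ and $\tilde s_2=-\log(T^*(x)-t_2(x_0))$, which must be arranged so that $\tilde s_2\ge\max\big(S_3,-\log(T^*(x)-t_1(x_0))\big)$, ensuring that all the bounds above hold uniformly for $x$ in the cone $|x-x_0|\le\frac{T_0-t}{\delta_0(x_0)}$.
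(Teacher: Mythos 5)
Your proposal is correct and follows essentially the same route as the paper, which itself only says ``by combining Theorem \ref{t1}' and Lemma \ref{L19} we get the following bounds'' (referring to \cite{HZnonl12, omar1} for details): the contrapositive of the blow-up criterion gives $L\ge 0$ since $w_{x,T^*(x)}$ is global in $s$ by the non-characteristic cone argument, monotonicity of $L$ from Theorem \ref{t1}' gives the upper bound and, after telescoping, the dissipation estimate, and the relation \eqref{10dec2} transfers everything to $L_0$, with the data bound on $L_0(w(\tilde s_2),\tilde s_2)$ exactly as in the paper's subsequent remark \eqref{2018N5}. Your attention to the threshold bookkeeping ($\tilde s_2\ge S_3$ via the definition \eqref{21dec1} of $t_2(x_0)$) matches what the paper's choice of $t_2(x_0)$ is designed to ensure.
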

\medskip

\begin{nb} Using the definition of  (\ref{scaling}) of
$w_{x,T^*(x)}=w$, we write easily
\begin{equation}\label{2018N5}
   L_0(w(\widetilde{s_2}), \widetilde{s_2})
\le K_{17},
\end{equation}
where
${K_{17}}={K_{17}}(T(x_0)-t_2(x_0),\|(u(t_2(x_0)),\partial_tu(t_2(x_0)))\|_{H^{1}\times L^{2}(B(x_0,\frac{T(x_0)-t_2(x_0)}{\delta_0(x_0)}))})$.
\end{nb}

\medskip

\noindent Starting from these bounds, the proof of Theorem \ref{t2} is similar to the proof in \cite{MZajm03,MZimrn05}
except for
the treatment of the nonlinear terms and
of the perturbation terms. In our opinion, handling these terms is straightforward in
all the steps of the proof, except for the first step,  where we bound the time averages of the  nonlinear term and second step, where we remove the  time averages. However, the third step 
where we conclude the boundedness of the 
$H^{1}_{loc,u}(\er^N)$ norm of   solution of equation  \eqref{A} from Proposition \ref{projan22} is the same as in Proposition \ref{prop3.1} (up to some very minor changes).
  For that reason,
we only give the first  two step  and refer to  \cite{MZajm03,MZimrn05}  and the similar part in section \ref{section2} in this paper for the remaining steps in the proof of  Theorem \ref{t2}. This is the step we prove here.
%(In the following $\varepsilon$ is an arbitrary positive number in $(0,1)$).
\begin{prop}\label{projan22}
For all  $s\ge 1-\log (T^*(x)-t_3(x_0))$,
for some $t_3(x_0)\in [t_2(x_0), T(x_0))$, 
\begin{equation}\label{projan221}
\frac1{s^a}\int_{s}^{s+1}\!\!\iint  |w|^{p+1}\log^a(2+\p^2w^2)\w \y\t
\le K_{18}.
\end{equation}
\end{prop}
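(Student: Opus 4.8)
The plan is to reduce the nonlinear quantity to the model term $\frac1{s^a}|w|^{p+1}\log^a(2+\p^2w^2)$ and then to read off its time average from the energy $E(w(s),s)$ and from the auxiliary functional $J(w(s),s)$. First I would use the elementary estimates on the nonlinearity (in the spirit of \eqref{equiv1} and \eqref{equiv4}) to show that the term $\iint e^{-\frac{2(p+1)s}{p-1}}s^{\frac{2a}{p-1}}F(\p w)\w\y$ occurring in \eqref{E} is comparable, up to a fixed multiplicative constant and a harmless lower-order contribution, to $\frac1{s^a}\iint|w|^{p+1}\log^a(2+\p^2w^2)\w\y$. Since $E$ is precisely the (nonnegative) quadratic part of the energy minus this nonlinear term, integrating this relation over $[s,s+1]$ rewrites the quantity in \eqref{projan221} in terms of the time averages of $\iint(\partial_sw)^2\w\y$, of $\iint(|\grad w|^2-(y.\grad w)^2)\w\y$, of $\iint w^2\w\y$, and of $E(w(s),s)$.

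Two of these are already under control for $s$ large: the average of $\iint(\partial_sw)^2\w\y$ is bounded by \eqref{22janv2} (since $\w\le \frac{\w}{1-|y|^2}$), and the average of $E$ is bounded by the constant bound on $L_0$ coming from Corollary \ref{22jan1} together with \eqref{2018N5}, after absorbing the small correction $\frac1{s\sqrt s}\iint\partial_sww\w\y$. It remains to bound, by a constant, the time averages of $\iint(|\grad w|^2-(y.\grad w)^2)\w\y$ and $\iint w^2\w\y$. For these I would integrate the differential inequality of Lemma \ref{LemJ} over $[s,s+1]$: there the gradient and $L^2$ terms appear on the left with the favorable weight $\frac{1}{\tau}$, the nonlinear term appears with the small negative weight $\frac{1}{\tau^{a+1}}$, and the right-hand side consists of the boundary increment $J(w(s),s)-J(w(s+1),s+1)$, the bounded contribution $\frac{p+3}{2}\int_s^{s+1}\frac{E}{\tau}\t$, and the remainder $\Sigma_2$, whose dissipative part is again controlled by \eqref{22janv2}. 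This sets up a self-improving (bootstrap) estimate coupling the nonlinear average to the quadratic averages.

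The main obstacle is to close all of this into a genuine constant bound rather than a polynomially growing one. The difficulty is twofold: the weight $\frac{1}{\tau^{a+1}}$ carried by the nonlinear term in Lemma \ref{LemJ} differs from the target weight $\frac{1}{s^a}$ by a factor $s$, and the dissipative remainder in $\Sigma_2$ only decays like $s^{-1/2}$ after using \eqref{22janv2}; a one-shot integration therefore loses powers of $s$. The way to overcome this is to exploit, for $s$ beyond a threshold $S_3$, both the smallness of the perturbative terms (the factors $\gamma(s)$, $\frac{2a}{(p-1)s}$ and the $s^{-(a+2)}$ nonlinear contribution inside $\Sigma_2$, which get absorbed into the main negative nonlinear term) and the summability of $\int_s^{s+1}\iint(\partial_sw)^2\frac{\w}{1-|y|^2}\y\,\t$ furnished by the monotonicity of the Lyapunov functional $L$ in Theorem \ref{t1}'; iterating the estimate over consecutive unit intervals then removes the residual powers of $s$ and yields the uniform constant $K_{18}$. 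This last step, which converts the rough polynomial bound of Section \ref{section2} into a sharp constant bound, is the heart of the argument and the only genuinely new point compared with the pure-power case of \cite{MZimrn05}.
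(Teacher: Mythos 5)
Your opening reduction is fine as far as it goes: writing the nonlinear average as (quadratic averages) minus (average of $E$), controlling $\int_s^{s+1}\iint(\partial_sw)^2\w\y\t$ by \eqref{22janv2} and the average of $E$ through the bounds on $L_0$ from Corollary \ref{22jan1}, is all correct. The genuine gap is in your third step, and you have in fact put your finger on it yourself without resolving it: you still need \emph{constant} bounds on $\int_s^{s+1}\iint(|\grad w|^2-(y.\grad w)^2)\w\y\t$ and $\int_s^{s+1}\iint w^2\w\y\t$, and Lemma \ref{LemJ} cannot produce them. In that lemma these terms carry the weight $\frac1\tau$, so extracting an unweighted average forces you to multiply through by $s$; this turns the boundary terms into $sJ$, which at this stage of the argument is only polynomially bounded (via \eqref{feb192}), and turns the worst part of $\Sigma_2$, namely $\frac{C}{\sqrt{\tau}}\iint(\partial_sw)^2\frac{\w}{1-|y|^2}\y$, into a contribution of size $\sqrt{s}$ per unit interval, even using \eqref{22janv2} or the finiteness of the total dissipation supplied by Theorem \ref{t1}'. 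Your proposed cure, ``iterating the estimate over consecutive unit intervals,'' has no contraction or absorption mechanism behind it: these losses are additive and occur on every interval, so iteration accumulates them instead of removing them. Note also that a constant bound on the gradient average is essentially part (i) of Theorem \ref{t2}, which the paper \emph{deduces from} Proposition \ref{projan22}; trying to establish it first makes your ordering circular relative to the actual structure of the argument.

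The idea your proposal is missing is that one should never bound the gradient and $w^2$ averages separately: the paper multiplies equation \eqref{A} by $w\w$ (not by $\partial_sw\,\w$) and integrates over $B\times[s_1,s_2]$, obtaining the virial-type identity \eqref{et1}; combining it with the time integral of $L_0$ (identity \eqref{et}) and with \eqref{23jan12}, the terms $\iint(|\grad w|^2-(y.\grad w)^2)\w\y$ and $\iint w^2\w\y$ cancel \emph{exactly}, leaving \eqref{et44}, in which $\frac{p-1}{2}\int_{s_1}^{s_2}\iint e^{-\frac{2(p+1)s}{p-1}}s^{\frac{2a}{p-1}}F(\p w)\w\y\s$ stands alone on the left. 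Every term on the right is then bounded in Lemma \ref{g} by
\begin{equation*}
\frac{K_{19}}{\nu_0}+\Big(K_{19}\nu_0+\frac{C}{s_1}\Big)
\int_{s_1}^{s_2}\iint e^{-\frac{2(p+1)s}{p-1}}s^{\frac{2a}{p-1}}F(\p w)\w\y\s ,
\end{equation*}
using Young's inequality with a free small parameter $\nu_0$, the Hardy inequality, the mean-value choice of the endpoints $s_1\in[s-1,s]$, $s_2\in[s+1,s+2]$, and the fact (from \eqref{equiv4bis}) that each cross term is subcritical with respect to $|w|^{p+1}\log^a(2+\p^2w^2)$. Choosing $\nu_0$ small and $t_3(x_0)$ close enough to $T(x_0)$ so that $\frac{C}{s_1}\le\frac14$ absorbs the nonlinear integral into the left-hand side and yields the uniform constant $K_{18}$. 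It is this exact cancellation followed by absorption-by-smallness, not any iteration of Lemma \ref{LemJ}, that converts the rough polynomial information of Section \ref{section2} into a constant bound.
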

Proof:
For $s\ge 1-\log (T^*(x)-t_2(x_0))$, let us work with time integrals betwen $s_1$ et $s_2$ where $s_1\in [s-1,s]$
and $s_2\in [s+1,s+2]$.
 By integrating the expression (\ref{5jan1}) of $L_0(w(s),s)$ in time between $s_1$ and $s_2$, where $s_2>s_1>-\log (T^*(x)-t_2(x_0))$, we obtain:
\begin{align}\label{et}
\int_{s_1}^{s_2}\!\!L_0(w(s),s)\s=&\displaystyle\int_{s_1}^{s_2}\iint\!\!\Big ( \frac{1}{2}(\partial_{s}w)^2
+\frac{p+1}{(p-1)^2}w^2
-e^{-\frac{2(p+1)s}{p-1}}s^{\frac{2a}{p-1}}   F(\p w)\Big )\w {\mathrm{d}}y{\mathrm{d}}s\nonumber\\
&+\frac{1}{2}\displaystyle\int_{s_1}^{s_2}\!\!\iint\!\!  (|\grad w|^2-|y.\grad w|^2)\w {\mathrm{d}}y{\mathrm{d}}s-\int_{s_1}^{s_2}\!\! \frac1{s\sqrt{s}}\!\!\displaystyle\iint\!\!w\partial_s w\w {\mathrm{d}}y{\mathrm{d}}s.
\end{align}
By multiplying the equation (\ref{A}) by $w\w$ and integrating both in time and in space over $B\times [s_1,s_2]$  we obtain the following identity, after some
integration by parts:
%
%\begin{equation}\label{id1}
%e^{-\frac{2(p+1)s}{p-1}}s^{\frac{2a}{p-1}}  \ps wf(\ps w)=\frac1{s^{a}}|w|^{p+1}\log^a(2+\p^2(s)w^2).  
%\end{equation}
\begin{eqnarray}\label{et1}
&&\Big [\iint\!\!\Big (w\partial_{s}w+(\frac{p+3}{2(p-1)}-N)w^2\Big ) \w{\mathrm{d}}y\Big ]_{s_1}^{s_2}=
\int_{s_1}^{s_2}\!\!\iint\!\!(\partial_{s}w)^2\w{\mathrm{d}}y{\mathrm{d}}s\\
&&-\int_{s_1}^{s_2}\!\!\iint\!\!\big(|\grad w|^2-(y.\grad w)^2\big)\w{\mathrm{d}}y{\mathrm{d}}s
-\frac{2p+2}{(p-1)^2}\int_{s_1}^{s_2}\!\!\iint\!\!w^2\w{\mathrm{d}}y{\mathrm{d}}s\nonumber\\
&&+\int_{s_1}^{s_2}\!\!\iint\!\!e^{-\frac{2ps}{p-1}}s^{\frac{a}{p-1}}w f(\p w)\w{\mathrm{d}}y{\mathrm{d}}s-2\alpha\!\int_{s_1}^{s_2}\!\!\iint\!\!w\partial_{s}w
\frac{|y|^2\w}{1-|y|^2}{\mathrm{d}}y{\mathrm{d}}s\nonumber\\
&&+2\!\!\int_{s_1}^{s_2}\!\!\iint\!\!y.\grad w\partial_{s} w \w\y{\mathrm{d}}s
+\frac{2a}{p-1}\int_{s_1}^{s_2}\iint \frac1{s}y.\grad w  w\w\y\s\nonumber\\
&&+\!\!\int_{s_1}^{s_2}\!\!\iint\!\!\gamma(s) w^2\w\y\s
+\frac{2a}{p-1}\int_{s_1}^{s_2}\iint \frac{1}{s}\partial_s w w\w \y\s.\nonumber
\end{eqnarray}
Note that, by using the identity  \eqref{defF123}, we get
\begin{align}\label{23jan12}
e^{-\frac{2(p+1)s}{p-1}}s^{\frac{2a}{p-1}}\Big(\frac{\p w}2 f(\p w)- F(\p w)\Big)&=\frac{p-1}{2}e^{-\frac{2(p+1)s}{p-1}}s^{\frac{2a}{p-1}} F(\p w)\\
&-\frac{p+1}2e^{-\frac{2(p+1)s}{p-1}}s^{\frac{2a}{p-1}}\Big(F_1(\p w)+F_2(\p w)\Big).\nonumber
\end{align}
By combining the identities (\ref{et}),  (\ref{et1}) and exploiting  \eqref{23jan12}, we obtain
\begin{eqnarray}\label{et44}
&&\frac{p-1}{2}\int_{s_1}^{s_2}\!\!\iint e^{-\frac{2(p+1)s}{p-1}}s^{\frac{2a}{p-1}} F(\p w)\w {\mathrm{d}}y{\mathrm{d}}s
\nonumber\\
&=&\frac12
\Big [\iint\!\!\Big (w\partial_{s}w+(\frac{p+3}{2(p-1)}-N)w^2\Big ) \w{\mathrm{d}}y\Big ]_{s_1}^{s_2}-
\int_{s_1}^{s_2}\!\!\iint\!\!(\partial_{s}w)^2\w{\mathrm{d}}y{\mathrm{d}}s\nonumber\\
&&+\int_{s_1}^{s_2}\!\!L_0(w(s),s)\s+\alpha\!\int_{s_1}^{s_2}\!\!\iint\!\!w\partial_{s}w
\frac{|y|^2\w}{1-|y|^2}{\mathrm{d}}y{\mathrm{d}}s\nonumber\\
&&-\!\!\int_{s_1}^{s_2}\!\!\iint\!\!y.\grad w\partial_{s} w \w{\mathrm{d}}y{\mathrm{d}}s
\underbrace{-\frac{a}{p-1}\int_{s_1}^{s_2}\iint \frac1{s}y.\grad  w w\w\y\s}_{A_1}\nonumber\\
&&\underbrace{-\frac12 \int_{s_1}^{s_2}\!\!\iint\!\!\gamma(s) w^2\w\y\s}_{A_2}
\underbrace{-\frac{a}{p-1}\int_{s_1}^{s_2}\iint \frac{1}{s}\partial_s w w\w \y\s}_{A_3}\\
&&+\underbrace{\int_{s_1}^{s_2}\!\! \frac1{s\sqrt{s}}\!\!\displaystyle\iint\!\!w\partial_s w\w {\mathrm{d}}y{\mathrm{d}}s}_{A_4}+\underbrace{\frac{p+1}{2}\int_{s_1}^{s_2}\!\!\iint e^{-\frac{2(p+1)s}{p-1}}s^{\frac{2a}{p-1}} F_1(\p w)\w {\mathrm{d}}y{\mathrm{d}}s}_{A_5}\nonumber\\
&&+\underbrace{\frac{p+1}{2}\int_{s_1}^{s_2}\!\!\iint e^{-\frac{2(p+1)s}{p-1}}s^{\frac{2a}{p-1}} F_2(\p w)\w {\mathrm{d}}y{\mathrm{d}}s}_{A_6}.\nonumber
\end{eqnarray}
We claim that Proposition \ref{projan22} follows from the following Lemma where we    control  the space-time integral of the nonlinear term  of $w$ and all the terms on the right-hand side of the relation
 (\ref{et44}) in terms of
the left-hand  side:
\begin{lem}\label{g}
 For all  $s\ge 1-\log (T^*(x)-t_3(x_0))$,
for some $t_3(x_0)\in [t_2(x_0), T(x_0))$, for all $\nn >0$, for all $\varepsilon \in  (0,1)$,
\begin{equation}\label{control}
\!\!\iint  |w|^{p+1-\varepsilon}\w {\mathrm{d}}y
\le K _{19}+C
\iint e^{-\frac{2(p+1)s}{p-1}}s^{\frac{2a}{p-1}} F(\p w)\w {\mathrm{d}}y,
\end{equation}
\begin{equation}\label{0control}
\iint e^{-\frac{2(p+1)s}{p-1}}s^{\frac{2a}{p-1}} F(\p w)\w {\mathrm{d}}y
\le   K_{19} +C
\!\!\iint  |w|^{p+1+\varepsilon}\w {\mathrm{d}}y,
\end{equation}
\begin{equation}\label{control3}
\int_{s_1}^{s_2}\!\!\!\iint\!|y.\grad w\partial_s w| \w{\mathrm{d}}y{\mathrm{d}}s
\le \frac{K_{19}}{\nn} +K_{19}\nn
\N,\qquad
\end{equation}
\begin{equation}\label{control1}
\sup_{s\in [s_1,s_2]}\iint \!\!w^2(y,s)\w{\mathrm{d}}y\le \frac{K_{19}}{\nn} +K_{19}\nn\N,
%\int_{s_1}^{s_2}\!\!\iint  |w|^{p+1}\w {\mathrm{d}}y{\mathrm{d}}s,
\end{equation}
\begin{equation}\label{control30}
\!\int_{s_1}^{s_2}\!\!\iint\!\!w\partial_{s}w
\frac{|y|^2\w}{1-|y|^2}{\mathrm{d}}y{\mathrm{d}}s
\le \frac{K_{19}}{\nn} +K_{19}\nn
\N,\qquad
\end{equation}
\begin{align}\label{control4}
\iint|w\partial_{s}w|\w {\mathrm{d}}y\le&
\iint (\partial_{s}w)^2\w {\mathrm{d}}y+\frac{K_{19}}{\nn}\nonumber\\ 
&+K_{19}\nn
\N,
\end{align}
\begin{equation}\label{control5}
\iint \Big( (\partial_{s}w(y,s_1))^2+(\partial_{s}w(y,s_2))^2\Big)\w {\mathrm{d}}y
\le K_{19},
\end{equation}
\begin{equation}\label{A10}
|A_1|\le \frac{K_{19}}{\nn} +(K_{19}\nn+
\frac{C}{s_1})
\N,
\end{equation}
\begin{equation}\label{A19}
|A_2|+|A_3|+|A_4|
\le 
\frac{K_{19}}{\nn} +K_{19}\nn\N,
%\int_{s_1}^{s_2}\!\!\iint  |w|^{p+1}\w {\mathrm{d}}y{\mathrm{d}}s,
\end{equation}
\begin{equation}\label{A30}
|A_5|+|A_6|
\le C+ \frac{C}{ s_1}\N.
\end{equation}
\end{lem}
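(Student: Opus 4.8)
\medskip
The plan is to prove the nine estimates \eqref{control}--\eqref{A30} one family at a time, all from a single toolkit: the elementary pointwise comparisons between the nonlinear density $e^{-\frac{2(p+1)s}{p-1}}s^{\frac{2a}{p-1}}F(\p w)$ (equivalently $\frac1{s^a}|w|^{p+1}\log^a(2+\p^2w^2)$ via \eqref{id1}) and the powers $|w|^{p+1\pm\e}$; the \emph{uniform} bounds on $L_0$ and on the dissipation $\int_{s}^{s+1}\iint(\partial_{s}w)^2\frac{\w}{1-|y|^2}\y$ supplied by Corollary \ref{22jan1}; and Young's inequality. The guiding principle is dictated by the absorption we must perform in \eqref{et44}: every term on its right-hand side has to be bounded by a uniform constant plus a coefficient of $\N$ that can be rendered as small as we wish, either through the free parameter $\nn$ or through the factor $\frac1{s_1}$, so that their total coefficient stays strictly below $\frac{p-1}{2}$.

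First I would dispatch the two nonlinear equivalences \eqref{control} and \eqref{0control}: they are purely pointwise in time and follow from \eqref{id1} together with the elementary comparisons between $F(\p w)$, $\p w f(\p w)$ and $|w|^{p+1\pm\e}$ recalled earlier in the paper, with no recourse to time integration or to the Lyapunov structure. The cornerstone for the remaining terms is a uniform bound on the gradient. Starting from the definition \eqref{E} of $E$ and the elementary inequality $(y.\grad w)^2\le|y|^2|\grad w|^2$, I would discard the nonnegative $(\partial_{s}w)^2$ and $w^2$ contributions to write $\tfrac12\iint|\grad w|^2(1-|y|^2)\w\y\le E(w(s),s)+\iint e^{-\frac{2(p+1)s}{p-1}}s^{\frac{2a}{p-1}}F(\p w)\w\y$, then replace $E$ by $L_0$ plus the lower-order term $\frac1{s\sqrt s}\iint\partial_{s}w\,w\w\y$ using \eqref{5jan1}. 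The decisive point is that $|L_0|\le C$ \emph{uniformly} by Corollary \ref{22jan1}, so the rough $s^q$ loss of \eqref{19fev1bis} is now replaced by a constant. This immediately powers \eqref{control3} and \eqref{control30}: Young's inequality splits each cross term into $\tfrac1{\nn}(\partial_{s}w)^2\frac{\w}{1-|y|^2}$, whose time integral is uniformly bounded by Corollary \ref{22jan1}, and $\nn$ times the gradient term, now controlled by $\N$ and a constant; for \eqref{control30} one additionally invokes a weighted Hardy inequality, available since $\alpha>0$, to absorb the extra $\frac1{1-|y|^2}$ weight.

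For the $w^2$ family I would first establish the time-average bound $\int_{s_1}^{s_2}\iint w^2\w\y\,\s\le C(\nn)+\nn C\N$ by Young's inequality (legitimate since $2<p+1-\e$) followed by the time integral of \eqref{control}; combined with $\frac{d}{ds}\iint w^2\w\y=2\iint w\partial_{s}w\w\y$, the mean value theorem and the uniform control of $\int_{s_1}^{s_2}\iint(\partial_{s}w)^2\w\y$ (dominated by the dissipation of Corollary \ref{22jan1}), this yields the supremum bound \eqref{control1}. Estimate \eqref{control4} then follows from $2|w\partial_{s}w|\le 2(\partial_{s}w)^2+\tfrac12 w^2$ and \eqref{control1}, while \eqref{control5} is obtained by choosing $s_1\in[s-1,s]$ and $s_2\in[s+1,s+2]$, again via the mean value theorem, at instants where $\iint(\partial_{s}w)^2\w\y$ lies below its average over the relevant unit interval. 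Finally the perturbation terms \eqref{A10}, \eqref{A19} and \eqref{A30} are the mildest: each carries an explicit $\frac1s$, $\frac1{s^2}$ or $\frac1{s\sqrt s}$ factor (through the drift coefficients, through $\gamma(s)=O(1/s)$ by \eqref{defgamma}, and through the $\frac1{s\sqrt s}$ term of $L_0$), so Young's inequality combined with \eqref{control}, \eqref{control1} and the pointwise bound \eqref{15dec1} on $F_1,F_2$ produces exactly the $\frac{C}{s_1}\N$-type coefficients displayed there.

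The main obstacle is the gradient term, and more precisely the passage from the rough estimate \eqref{19fev1bis}, whose $s^q$ error would ruin the uniformity of the constants $K_{19}$ and make the absorption in \eqref{et44} impossible, to a bound with a genuinely constant error. The entire scheme hinges on this replacement of the polynomial loss by the uniform bound on the Lyapunov functional from Corollary \ref{22jan1}; once it is in place, the rest is careful bookkeeping, the only subtlety being to keep every coefficient of $\N$ proportional to $\nn$ or to $\frac1{s_1}$ so that, after collecting all contributions in \eqref{et44} and choosing $\nn$ small and $s_1$ large, the nonlinear integral can finally be absorbed.
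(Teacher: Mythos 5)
Your proposal is correct and takes essentially the same approach as the paper: \eqref{control} and \eqref{0control} follow from the elementary comparisons \eqref{equiv4} and \eqref{equiv4bis}, and the remaining bounds are obtained by exactly the scheme you describe --- the uniform bounds on $L_0$ and on the dissipation from Corollary \ref{22jan1} replacing the rough polynomial estimates, Young's inequality for the cross terms, the Hardy inequality for the weighted terms, mean-value choices of $s_1,s_2$ for \eqref{control1} and \eqref{control5}, and the pointwise bounds on $F_1,F_2$ for $A_5,A_6$. The only difference is expository: the paper delegates the middle estimates to the unperturbed-case proofs in \cite{MZajm03,MZimrn05} ``up to minor changes'', whereas you spell out those details, and your identification of the uniform $L_0$ bound as the ingredient that makes the constants $K_{19}$ uniform is precisely what legitimizes that citation.
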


\bigskip

Indeed, from (\ref{et44}) and this Lemma,
we deduce that
\begin{align*}%\label{}
\N\le \frac{K_{19}}{\nn}
+(K_{19}\nn+
 \frac{C}{s_1})
\N.
\end{align*}
Now, we can use the fact that
$s_1\ge -1 -\log(T^*(x)-t_3(x_0))\ge-1 -\log(T(x_0)-t_3(x_0))$ and we choose 
$T(x_0)-t_3(x_0)$  small enough, so that
$ \frac{C}{s_1}\le \frac1{-1 -\log(T(x_0)-t_3(x_0))}\le \frac14 .$
 If we choose  $\nn$ small enough  so that
% for all $s_1\ge \widehat{s}_5$,
 $K_{19} \nu_0 \le \frac14$, we obtain
\begin{align*}%\label{}
\N\le {K_{19}}.
\end{align*}
 
Since $[s,s+1]\subset  [s_1,s_2]$,
 we derive  from \eqref{equiv1} that
 (\ref{projan221}).

\bigskip

It remains to prove Lemma \ref{g}.
%This concludes the proof of Proposition \ref{pro}.\\

\bigskip
%Let us now find an equivalence between the left-hand side in the inequality \eqref{fev1} and the $L^{p+1}$ norm of $w$. More precisely, we will
% find  the following estimate:
%\begin{equation}\label{fev2}
% \ibint |w(y,s )|^{p+1}\w \y\le 
%  \frac{C}{s^a}  \ibint |w(y,s )|^{p+1}\log^a (2+\p^2 w^2)\w  \y    +C.
%\end{equation}
%
%By using   \eqref{16dec12}  we have, for all $y\in A_2(s)$
%\begin{equation}\label{fev5}
% |w(y,s )|^{p+1} \le 
%  \frac{K}{s^a}   |w(y,s )|^{p+1}\log^a (2+\p^2 w^2).
%%\end{equation}
%Adding  \eqref{fev3}, \eqref{123fev123} and  \eqref{fev1}, we conclude that  \eqref{fev1} is  still valid, for all $y\in (-1,1).$  Therefore,  the  estimates \eqref{control}   and \eqref{0control} follow  immediately from  \eqref{fev1}  after   integration over   $(-1,1).$
%\medskip 

  {\it{Proof of  Lemma \ref{g}:}}
By \eqref{equiv4}  and \eqref{equiv4bis}, we can write esaily \eqref{control}  and \eqref{0control}.
 Thanks to \eqref{control},  we can adapt with no difficulty
 the proof  in the unperturbed case \cite{MZajm03,MZimrn05}  (up to some very minor changes),
in order to get   the proof of the estimates \eqref{control3}, \eqref{control1}, \eqref{control30}, \eqref{control4}  and \eqref{control5}. 
 Also, by using   \eqref{control} and   the Hardy inequality
\begin{equation}
 \ibint w^2\frac{|y|^2\w}{1-|y|^2}\y\leq C\ibint  |\grad w|^2(1-|y|^2) \w \y+C\ibint w^2\w \y.\nonumber
\end{equation}
 (see the appendix  in \cite{MZajm03} for a proof),  we easily   conclude \eqref{A10} and \eqref{A19}.

\medskip

Finally, it remains only to control the terms $A_5$ and $A_6$.
Note from  \eqref{equiv1},    \eqref{equiv2}  and  \eqref{equiv3}  that
\begin{equation}\label{fev6}
| F_1(\p w)|+| F_2(\p w)|\le   C+C \frac{ F(\p w)}{s}.
\end{equation}
The result \eqref{A30} follows  immediately from  \eqref{fev6}.
This concludes the proof of Lemma \ref{g} and Proposition \ref{projan22} too.
\Box

{{\it {Proof of Theorem \ref{t2}}}} :
Since the derivation of the boundedness of the 
$H^{1}_{loc,u}(\er^N)$ norm of   solution of equation  \eqref{A} from Proposition \ref{projan22} is the same as in Proposition \ref{prop3.1}
(from the estimates
\eqref{feb19}, \eqref{feb191}
\eqref{feb192} and
\eqref{23fev1} (up to some very minor changes).  Moreover, thanks to the estimate 
\eqref{2018N00}, the boundedness of the 
$H^{1}_{loc,u}(\er^N)$ norm,  we  prove  easily  the boundedness of 
$L^{2}_{loc,u}(\er^N)$ norm of  $\partial_s w$ 
 with the ball   $B(0,\frac12 )$.
%Let us remark that this
%estimate already follows from 
%\eqref{2018N00}
% and the Boundedness of the 
%$H^{1}_{loc,u}(\er^N)
%\times L^{2}_{loc,u}(\er^N)$ norm of  $(w,\partial_s w)$ 
% norm,    solution of equation  \eqref{A} with
%the ball $B$ replaced by $B(0,\frac12 )$.
Thanks to the covering technique (we refer the reader to Merle
 and Zaag \cite{MZimrn05} (pure power case) and Hamza and Zaag in  Lemma 2.8 in \cite{HZjhde12}),  we easily extend this estimate from $B(0,\frac12 )$ to $B$. This concludes the proof of  Theorem \ref{t2}.
\Box
\medskip

\medskip

%%%%%%%%%%%%%%%%%%%%%%%%%%%%%%%%%%%%%%%%%%%%%%%%%%%%%%%%%%%%%%%%%%%%%%%%%%
%%%%%%%%%%%%%%%%%%%%%%%%%%%%%%%%%%%%%%%%%%%%%%%%%%%%%%%%%%%%%%%%%%%%%%%%%%

 \appendix

 \section{Some elementary lemmas.}
 Let $f$, $F$, $F_2$  be the functions defined in  \eqref{deff}, \eqref{defF}  and  \eqref{defF123}.  
Clearly, we have 
\begin{lem}\label{FFF} \ {\  }Let $q>1$,\\ 
\begin{align}
\int_0^u|v|^{q-1}v\log^{{a}}(2+v^2 )\v  \sim&  \frac{| u|^{q+1}}{q+1}\log^{{a}}(2+u^2  ),\quad  \text{ as } \;\; |u| \to \infty,
\label{estF0}\\
F(u)  \sim &\frac{uf(u)}{p+1} \quad  \text{ as } \;\; |u| \to \infty,\label{estF}\\
%F_1(u)\sim& -\frac{2a}{(p+1)^2}\frac{uf(u)}{\log(2+u^2)}\quad  \text{ as } \;\; |u| \to \infty, \label{estF2}\\
F_2(u)\sim &\frac{Cuf(u)}{\log^2(2+u^2)}\quad  \text{ as } \;\; |u| \to \infty.\label{estF3}
%F_4(\beta w)\le Ce^{-\frac4{p-1}} (1+F(\beta w))\label{estf3}
\end{align}
\end{lem}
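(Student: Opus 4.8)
\emph{Proof plan.} Since $f$ is odd, the primitive in \eqref{estF0} and the function $F_2$ defined in \eqref{defF123} are both even in $u$, so it suffices to treat $u\to+\infty$. The plan is to reduce all three asymptotics to a single elementary fact: for every $\beta>-1$ and every $c\in\mathbb{R}$,
\[
\int_0^u v^{\beta}\log^c(2+v^2)\,\mathrm{d}v\;\sim\;\frac{u^{\beta+1}}{\beta+1}\log^c(2+u^2)\qquad\text{as } u\to+\infty .
\]
This follows from L'H\^opital's rule: both sides tend to $+\infty$ (as $\beta+1>0$), and the quotient of the derivative of the right-hand side by that of the left-hand side equals $1+\frac{c}{\beta+1}\cdot\frac{2u^2}{(2+u^2)\log(2+u^2)}$, which tends to $1$ because $\log(2+u^2)\to\infty$. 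Taking $\beta=q$, $c=a$ gives \eqref{estF0} at once; specializing to $q=p$ and using $uf(u)=u^{p+1}\log^a(2+u^2)$ yields \eqref{estF}.

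For the refined estimate \eqref{estF3} I would start from a single integration by parts, which produces the exact identity
\[
F(u)-\frac{uf(u)}{p+1}=-\frac{2a}{p+1}\int_0^u\frac{v^{p+2}}{2+v^2}\log^{a-1}(2+v^2)\,\mathrm{d}v .
\]
Writing $\frac{v^{p+2}}{2+v^2}=v^p-\frac{2v^p}{2+v^2}$ and integrating the $v^p\log^{a-1}$ piece by parts once more extracts precisely the boundary term $-\frac{2a}{(p+1)^2}u^{p+1}\log^{a-1}(2+u^2)=F_1(u)$, together with two remainder integrals. Subtracting $F_1(u)$ then gives, by the definition \eqref{defF123},
\[
F_2(u)=\frac{4a(a-1)}{(p+1)^2}\int_0^u\frac{v^{p+2}}{2+v^2}\log^{a-2}(2+v^2)\,\mathrm{d}v+\frac{4a}{p+1}\int_0^u\frac{v^p}{2+v^2}\log^{a-1}(2+v^2)\,\mathrm{d}v .
\]
Applying the elementary asymptotic above to the dominant powers of each integrand shows the first integral is $\sim\frac{u^{p+1}}{p+1}\log^{a-2}(2+u^2)$ and the second is $\sim\frac{u^{p-1}}{p-1}\log^{a-1}(2+u^2)$. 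Since the former exceeds the latter by a factor $u^2/\log(2+u^2)\to\infty$, it governs the asymptotics, and one obtains $F_2(u)\sim\frac{4a(a-1)}{(p+1)^3}u^{p+1}\log^{a-2}(2+u^2)$. Recalling $\frac{uf(u)}{\log^2(2+u^2)}=u^{p+1}\log^{a-2}(2+u^2)$, this is exactly \eqref{estF3} with $C=\frac{4a(a-1)}{(p+1)^3}$.

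The only delicate point is the bookkeeping in \eqref{estF3}: one must perform \emph{exactly} two integrations by parts, recognize the extracted boundary term as $F_1$, and then compare the orders $u^{p+1}\log^{a-2}$ and $u^{p-1}\log^{a-1}$ of the two remainders to decide which one dominates. I would also flag the degenerate exponents $a\in\{0,1\}$, where the constant $C$ vanishes and the leading term disappears (indeed $F_2\equiv0$ when $a=0$); in those cases \eqref{estF3} should be read as the upper bound $F_2(u)=O\!\bigl(uf(u)/\log^2(2+u^2)\bigr)$, which is the only property actually invoked in \eqref{15dec1} and \eqref{fev6}. \Box
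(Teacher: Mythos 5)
Your proof is correct. One caveat on the comparison: this paper gives no argument for the lemma at all --- its ``proof'' is just a pointer to Lemma A.1 of \cite{HZjmaa2020} --- so your derivation supplies precisely the details that the citation hides, and it follows the natural elementary route one would expect there. The L'H\^opital asymptotic $\int_0^u v^{\beta}\log^c(2+v^2)\,\mathrm{d}v\sim \frac{u^{\beta+1}}{\beta+1}\log^c(2+u^2)$ (for $\beta>-1$) settles \eqref{estF0} and \eqref{estF} at once; your two integrations by parts correctly reconstruct $F_1$ as the extracted boundary term, so that by \eqref{defF123} the function $F_2$ is exactly the sum of your two remainder integrals, and your comparison of the orders $u^{p+1}\log^{a-2}(2+u^2)$ and $u^{p-1}\log^{a-1}(2+u^2)$ is sound. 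A bonus of this bookkeeping is the explicit constant $C=\frac{4a(a-1)}{(p+1)^3}$ in \eqref{estF3}, which the statement leaves unspecified. Your flags are pertinent and concern the statement rather than your argument: for $a\in\{0,1\}$ the leading constant vanishes ($F_2\equiv 0$ when $a=0$, and $F_2(u)\sim\frac{4}{(p+1)(p-1)}\,u^{p-1}$ when $a=1$), so \eqref{estF3} can then only be read as the bound $|F_2(u)|=O\bigl(uf(u)/\log^2(2+u^2)\bigr)$; moreover, your formula shows $C<0$ when $0<a<1$, so the paper's blanket convention that $C$ is a generic \emph{positive} constant cannot be meant literally in \eqref{estF3}. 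Since the paper only ever invokes this lemma through the upper bounds \eqref{equiv3}, \eqref{15dec1} and \eqref{fev6} on $|F_2|$, these degeneracies are harmless, and your reading of \eqref{estF3} as an $O$-estimate is exactly the property actually needed.
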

\begin{proof} 
 See Lemma A.1 %page 87
 in \cite{HZjmaa2020}.
\end{proof}
\Box

Thanks to \eqref{estF0}, \eqref{estF} and \eqref{estF3}, we can
  state and prove  the following estimates: 
%on the nonlinear  term $F$  defined in \eqref{defF}, the first approximation $F_1$ defined in
 %\eqref{defF2} and $F_2$ defined in  \eqref{defF123},  which will be needed later:
\begin{lem}\label{lemm:esth}% Let $F,  F_1,  F_2$  be the function defined in \eqref{defF}. 
For all $s \geq 1$,  for all $z\in \er$,
%$i)$ 
\begin{align}
 C^{-1} \ps z f(\ps z))\le C+F\left(\ps z)\le C (1+\ps z f(\ps z)\right),\label{equiv1}\\
F_1(\ps z)\le C+C\frac{ \ps z}{s}f(\ps z),\quad\quad\label{equiv2}\\
F_2(\ps z)\le C+C \frac{ \ps z}{s^2}f(\ps z),\quad\quad\label{equiv3}\\
e^{-\frac{2ps}{p-1}}s^{\frac{a}{p-1}}   |f(\ps z)|\le C+ C ( \varepsilon) |z|^{p+\varepsilon  }, \quad\quad \forall  \varepsilon >0,
\label{equiv44}\\
   |z|^{\pp }\le  C ( \varepsilon)e^{-\frac{2ps}{p-1}}s^{\frac{a}{p-1}} | f(\ps z)|+C, \quad\quad \forall  \varepsilon \in (0,p),
\label{equiv44bis}\\
e^{-\frac{2(p+1)s}{p-1}}s^{\frac{2a}{p-1}}  F(\ps z)\le C+  C ( \varepsilon)  |z|^{p+ \varepsilon+1}, \quad\quad \forall  \varepsilon >0,  
\label{equiv4}\\
  |z|^{\pp +1}\le  C ( \varepsilon)
e^{-\frac{2(p+1)s}{p-1}}s^{\frac{2a}{p-1}}  F(\ps z) +C,\quad \quad \forall   \varepsilon \in (0,p+1),
\label{equiv4bis}
\end{align}
where
$\phi$,  $F$, $F_1$ and $F_2$  are given in  \eqref{defphi},    \eqref{defF}, 
 \eqref{defF2} and   \eqref{defF123}.
\end{lem}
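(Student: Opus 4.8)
The plan is to reduce each of the seven inequalities to an elementary pointwise comparison between the functions $f$, $F$, $F_1$, $F_2$ evaluated at the scaled argument $u=\ps z$, and then to exploit the exact cancellation between the exponential prefactors and the corresponding powers of $\ps$. First I would record the two ingredients on which everything rests. The first is the asymptotic information of Lemma \ref{FFF}: as $|u|\to\infty$ one has $F(u)\sim uf(u)/(p+1)$, the explicit identity $F_1(u)=-\frac{2a}{(p+1)^2}|u|^{p+1}\log^{a-1}(2+u^2)$, and $F_2(u)\sim Cuf(u)/\log^2(2+u^2)$ from \eqref{estF3}, all measured against $uf(u)=|u|^{p+1}\log^a(2+u^2)\ge 0$ (and $F\ge 0$ since $F$ is even and increasing in $|u|$). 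The second is the algebraic identity that, since $\ps^p=e^{\frac{2ps}{p-1}}s^{-\frac{ap}{p-1}}$ and $\ps^{p+1}=e^{\frac{2(p+1)s}{p-1}}s^{-\frac{a(p+1)}{p-1}}$, the prefactors cancel the exponential growth exactly, leaving
\[
e^{-\frac{2ps}{p-1}}s^{\frac{a}{p-1}}|f(\ps z)|=s^{-a}|z|^{p}\log^{a}(2+\p^2 z^2),\quad
e^{-\frac{2(p+1)s}{p-1}}s^{\frac{2a}{p-1}}\,\ps z f(\ps z)=s^{-a}|z|^{p+1}\log^{a}(2+\p^2 z^2).
\]
Thus, once the asymptotics are inserted, every estimate becomes a statement about the single scalar quantity $s^{-a}\log^{a}(2+\p^2 z^2)$ times a power of $|z|$.

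The workhorse is then the decomposition of the range of $z$ into the two regions $\{\p^2 z^2\le 1\}$ and $\{\p^2 z^2\ge 1\}$, exactly as in \eqref{27nov1}. On $\{\p^2 z^2\le 1\}$ the argument $u=\ps z$ is bounded, so $F$, $F_1$, $F_2$ are bounded; moreover $|z|\le \ps^{-1}$ is exponentially small in $s$, which kills the growing prefactors and bounds every left-hand side by an absolute constant, producing the $+C$ terms for free. On $\{\p^2 z^2\ge 1\}$ I would use the two-sided control of the logarithm: from $2+\p^2 z^2\le(2+\p^2)(2+z^2)$ and $\log(2+\p^2)\le Cs$ one gets $\log(2+\p^2 z^2)\le Cs+\log(2+z^2)$, while $\log(2+\p^2 z^2)\ge \log\p^2\ge cs$ for $s$ large (the definition \eqref{defphi} gives $\log\ps=\frac1{p-1}(2s-a\log s)\ge \frac{s}{p-1}$ for $s\ge S_1$). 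The lower bound converts the $\log^{-1}$ and $\log^{-2}$ factors coming from $F_1$ and $F_2$ into $s^{-1}$ and $s^{-2}$, which yields \eqref{equiv2} and \eqref{equiv3}; the upper bound, combined with $\log(2+z^2)\le C(\varepsilon)(1+|z|^{\varepsilon})$, turns $s^{-a}\log^{a}(2+\p^2 z^2)$ into a bounded multiple of a small power of $|z|$, so that after multiplication by $|z|^{p}$ or $|z|^{p+1}$ one obtains the bounds $C(\varepsilon)|z|^{p+\varepsilon}$ and $C(\varepsilon)|z|^{p+1+\varepsilon}$ of \eqref{equiv44} and \eqref{equiv4}; the reverse inequalities \eqref{equiv44bis} and \eqref{equiv4bis} follow by bounding the logarithmic factor from below in the same fashion. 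The two-sided comparison $F(u)\asymp uf(u)$ giving \eqref{equiv1} needs no region splitting, only the asymptotics together with continuity and positivity on compact sets.

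The main obstacle I anticipate is making the control of $s^{-a}\log^{a}(2+\p^2 z^2)$ hold with a single constant for both signs of $a$ and uniformly for all $s\ge 1$. When $a\ge 0$ one uses the inequality $(x+y)^{a}\le C(x^{a}+y^{a})$ on the upper bound, whereas when $a<0$ the roles reverse: $s^{-a}=s^{|a|}$ grows, and one must invoke the lower bound $\log(2+\p^2 z^2)\ge cs$ to keep it under control; in the regime where $|z|$ is so large that $\log(2+z^2)$ dominates $s$, one checks separately that $|z|^{-\varepsilon}\log^{|a|}(2+z^2)$ stays bounded so that the power of $|z|$ absorbs the logarithm. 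Finally, the lower bound $\log\ps\gtrsim s$ only holds for $s\ge S_1$, so the bounded range $1\le s\le S_1$ must be disposed of by compactness in $s$, enlarging $C$ accordingly. Once these points are settled, each of the seven inequalities follows by directly combining the cancellation identity, the region splitting, and the logarithmic bounds.
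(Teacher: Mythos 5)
Your overall skeleton --- the asymptotics of Lemma \ref{FFF}, the exact cancellation of the exponential prefactors against powers of $\phi(s)$, a two-region decomposition, and the elementary bound $\log(2+z^2)\le C(\varepsilon)(1+|z|^{\varepsilon})$ --- is indeed the strategy of the paper, and your treatment of \eqref{equiv1} is correct. However, your decomposition is taken at the wrong threshold, and the inequality on which your ``large'' region rests is false. On $\{\phi^2(s) z^2\ge 1\}$ you claim $\log(2+\phi^2(s) z^2)\ge \log \phi^2(s)\ge cs$; but that region only guarantees $2+\phi^2(s)z^2\ge 3$, while the inequality $2+\phi^2(s)z^2\ge \phi^2(s)$ requires $z^2\ge 1-2\phi^{-2}(s)$, i.e.\ essentially $|z|\ge 1$, not $|z|\ge \phi^{-1}(s)$. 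For instance, if $\phi^2(s)z^2=e^{\sqrt s}$, the point lies in your large region, yet $\log(2+\phi^2(s)z^2)\approx \sqrt s\ll s$. Your appeal to \eqref{27nov1} is also a misquotation: the sets there are $\{\phi(s) w^2\lessgtr 1\}$, with $\phi$ to the \emph{first} power, so the threshold for $u^2=(\phi(s)z)^2$ is the exponentially large quantity $\phi(s)$, not $1$; correspondingly, the paper's own proof splits at $z^2\phi(s)\gtrless 4$.

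This matters because your split leaves the whole intermediate range $1\le \phi^2(s)z^2\le 4\phi(s)$ uncovered, and that is exactly where the difficulty lives. With the paper's split, on $\{z^2\phi(s)\ge 4\}$ one has $\phi^2(s)z^2\ge 4\phi(s)$, hence $\log(2+\phi^2(s)z^2)\ge \log\phi(s)=\frac{2s-a\log s}{p-1}\gtrsim s$, which is the bound you wanted; and on $\{z^2\phi(s)\le 4\}$ the argument $u=\phi(s)z$ is \emph{not} bounded (it ranges up to $2\sqrt{\phi(s)}$), so your ``$F$, $F_1$, $F_2$ are bounded there'' reasoning does not transfer --- what closes that case is that $|z|\le 2\phi(s)^{-1/2}$ is still exponentially small, so that, e.g.\ for $a<0$, the dangerous factor $s^{|a|}$ in $s^{-a}|z|^{p}\log^{a}(2+\phi^2(s)z^2)$ is crushed by $|z|^{p}\le C\phi(s)^{-p/2}$. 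This correction repairs your argument for \eqref{equiv44}, \eqref{equiv44bis}, \eqref{equiv4} and \eqref{equiv4bis}. Be warned, though, that \eqref{equiv2}--\eqref{equiv3} remain delicate even after the split is fixed: for $a<0$ and $1\ll \log(2+u^2)\ll s$ one has $F_1(u)\Big/\Big(\tfrac1s\,uf(u)\Big)=\frac{2|a|}{(p+1)^2}\,\frac{s}{\log(2+u^2)}\to\infty$ while both sides are exponentially large, so no additive constant $C$ can compensate in that range; any complete argument there must exploit the exponentially small size of $|z|$, equivalently of the prefactors $e^{-\frac{2ps}{p-1}}$, $e^{-\frac{2(p+1)s}{p-1}}$ present wherever the lemma is applied --- a point glossed over both in your proposal and in the paper's very brief proof.
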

\begin{proof} Note that \eqref{equiv1} obviously follows from \eqref{estF}. In order to derive estimates \eqref{equiv2} and \eqref{equiv3}, considering the first case $z^2\ps \geq 4$, then the case $z^2\ps \leq 4$, we would obtain
 \eqref{equiv2} and \eqref{equiv3} by using \eqref{estF0}, \eqref{estF}  and\eqref{estF3}.
Similarly, by taking into account  the  inequality  $\log^a (2+u^2)\le C(\varepsilon )+   |u|^{\varepsilon },$   we  conclude easily 
\eqref{equiv44}, \eqref{equiv44bis}, \eqref{equiv4} and \eqref{equiv4bis}.
%$ii)$  Replacing $\xi$ by $\beta(s)z$ and using \eqref{equ:estBah}, we then derive $ii)$.
 This ends the proof of Lemma \ref{lemm:esth}.\Box
\end{proof}
\def\cprime{$'$} \def\cprime{$'$}
\providecommand{\bysame}{\leavevmode\hbox to3em{\hrulefill}\thinspace}
\providecommand{\MR}{\relax\ifhmode\unskip\space\fi MR }
% \MRhref is called by the amsart/book/proc definition of \MR.
\providecommand{\MRhref}[2]{%
  \href{http://www.ams.org/mathscinet-getitem?mr=#1}{#2}
}
\providecommand{\href}[2]{#2}

%%%%%%%%%%%%%%%%%%%%%%%%%%%%%%%%%%%%%%%%%%%%%
%%%%%%%%%%%%%%%%%%%%%%%%%%%%%%%%%%%%%%%%%%%%%

\noindent{\bf Address}:\\
 Department of Basic Sciences, Deanship of Preparatory and Supporting Studies,
  Imam Abdulrahman Bin Faisal University
P.O. Box 1982 Dammam, Saudi Arabia.\\
% D\'epartement de Math\'ematiques,  Facult\'e des Sciences de Tunis, Universit\'e de Tunis El-Manar,  Campus Universitaire 1060, %Tunis, Tunisia.\\
\vspace{-7mm}
\begin{verbatim}
e-mail:  mahamza@iau.edu.sa
\end{verbatim}
Universit\'e Paris 13, Institut Galil\'ee,
Laboratoire Analyse, G\'eom\'etrie et Applications, CNRS UMR 7539,
99 avenue J.B. Cl\'ement, 93430 Villetaneuse, France.\\
\vspace{-7mm}
\begin{verbatim}
e-mail: Hatem.Zaag@univ-paris13.fr
\end{verbatim}

\end{document}